\documentclass[11pt, reqno]{amsart}

\usepackage{amsmath}
\usepackage{amsthm}
\usepackage{amssymb}
\usepackage{amsfonts}
\usepackage[mathscr]{eucal}
\usepackage{latexsym}
\usepackage{graphicx}
\graphicspath{{plots/}}

\usepackage{xcolor}

\newtheorem{prop}{Proposition}[section]
\newtheorem{cor}[prop]{Corollary}
\newtheorem{thm}[prop]{Theorem}
\newtheorem{lem}[prop]{Lemma}
\newtheorem{defi}[prop]{Definition}

\theoremstyle{definition}

\newtheorem{example}[prop]{Example}
\newtheorem{examples}[prop]{Examples}
\newtheorem{rem}[prop]{Remark}
\newtheorem{rems}[prop]{Remarks}

\renewcommand{\phi}{\varphi}

\newcommand{\B}{\mathbb{B}}
\newcommand{\C}{\mathbb{C}}
\newcommand{\D}{\mathbb{D}}
\newcommand{\N}{\mathbb{N}}
\newcommand{\R}{\mathbb{R}}

\newcommand{\PP}{\mathbb{P}}



\DeclareMathOperator{\supp}{supp}
\DeclareMathOperator{\tdeg}{tdeg}

\DeclareMathOperator{\dist}{dist}
\DeclareMathOperator{\spann}{span}

\DeclareTextFontCommand{\textnf}{\normalfont}

\newcommand{\diag}{\mathrm{diag}}



\renewcommand{\setminus}{\smallsetminus}
\newcommand{\ol}{\overline}

\renewcommand{\subset}{\subseteq}



\title
  [Christoffel-Darboux kernels. I]
  {Perturbations of Christoffel-Darboux kernels. I:  detection of outliers}

\author[Beckermann]{Bernhard Beckermann}
\address{Laboratoire Painlev\'e UMR 8524\\
         Department of Mathematics\\
         Universit\'e de Lille\\
         59655 Villeneuve d'Ascq, France}
\email{Bernhard.Beckermann@univ-lille.fr}

\author[Putinar]{Mihai Putinar}
\address{Department of Mathematics\\
         University of California at Santa Barbara\\
         Santa Barbara, California\\
         93106-3080\\
         {\rm and} \ \ School of Mathematics and Statistics\\
         Newcastle University\\
         Newcastle upon Tyne\\
         NE1 7RU, \ UK}
\email{mputinar@math.ucsb.edu, \ mihai.putinar@ncl.ac.uk}

\author[Saff]{Edward B.\ Saff}
\address{Center for Constructive Approximation, Department of Mathematics\\ Vanderbilt University\\
         1326 Stevenson Center\\
         37240 Nashville, TN\\
         USA}
         \email{edward.b.saff@Vanderbilt.edu}
\urladdr{http://my.vanderbilt.edu/edsaff/}

\author[Stylianopoulos]{Nikos Stylianopoulos}
\address{Department of Mathematics and Statistics,
         University of Cyprus, P.O. Box 20537, 1678 Nicosia, Cyprus}
\email{nikos@ucy.ac.cy}
\urladdr{http://ucy.ac.cy/\textasciitilde nikos}

\thanks{
{\it Acknowledgements.}
The first author was supported in part  by the Labex CEMPI (ANR-
11-LABX-0007-01).
The third author was partially supported by the
U.S.\ National Science Foundation grant  DMS-1516400.
The forth author was supported by the University of Cyprus grant 3/311-21027.
}

\date{\today}
\begin{document}
\keywords{orthogonal polynomial, Christoffel-Darboux kernel, Green function, Siciak function, Bergman space, outlier, leverage score}

\subjclass[2010]{34E10,42C05, 46E22, 30H20, 30E05, }

\begin{abstract} Two central objects in constructive approximation, the Chri\-stoffel-Darboux kernel and the Christoffel function, are encoding ample information about the
associated moment data and ultimately about the possible generating measures. We develop a multivariate theory of the Chri\-stoffel-Darboux kernel in $\C^d$,
with emphasis on the perturbation of Christoffel functions and their level sets with respect to perturbations of small norm or low rank. The statistical notion of leverage score provides a quantitative
criterion for the detection of outliers in large data. Using the refined theory of Bergman orthogonal polynomials, we illustrate the main results, including some numerical simulations, in the case
of finite atomic perturbations of area measure of a 2D region. Methods of function theory of a complex variable and (pluri)potential theory are widely used in the derivation of our perturbation formulas.

\end{abstract}

\maketitle

\tableofcontents

\section{Introduction}  The wide scope of this first article in a series is a systematic study of perturbations of multivariate Christoffel-Darboux kernels. We depart from a Bourbaki style by treating in parallel
a specific application, namely the detection of outliers in statistical data. Both themes have a rich and glorious history which cannot be condensed in a single research article. We will merely provide glimpses with precise references into the past or recent achievements as our story unfolds, continuously interlacing the two themes.

The reproducing kernel $K^\mu_n(z,w)$ on the space of univariate or multivariate polynomials of degree less than or equal to $n$, in the presence of an inner product derived from the Lebesgue space $L^2(\mu)$
associated with a positive measure $\mu$ in $\C^d$, bears the name of {\it Christoffel-Darboux kernel}. It is given in the univariate
case by
$$ K^\mu_n(z,w) = \sum_{j=0}^n p^\mu_j(z) \ol{p^\mu_j(w)},
$$
where the $p_j^{\mu}$ are orthonormal polynomials of respective degrees $j$.
For more than a century this object has repeatedly appeared (mostly in the one complex variable setting) as a central figure in interpolation, approximation, and asymptotic expansion
problems.

\subsection{The Christoffel-Darboux kernel and Christoffel function} Consider for instance the wide class of inverse problems leading in the end to the classical moment problem on the real line. There one has a grasp on $K_n(z,w) = K^\mu_n(z,w), \ n \geq 1,$  before knowing
the representing measure(s) $\mu$. This gives for complex and non-real values $z \in \C$ the radius of Weyl's circle of order $n$:
$$ r_n(z) = \frac{1}{|z - \overline{z}| K_n(z,z)}.$$
The vanishing of the limit $r_\infty(z) = \lim_n r_n(z)$ for a single non-real $z$ resolves without ambiguity the delicate question whether the moment problem is determinate (has a unique solution).
For a real value $x \in \R$ and a fixed $n$ the reciprocal
$\frac{1}{K_n(x,x)},$
also known as the {\it Christoffel function}, gives a tight upper bound for the maximal mass a finite atomic measure with prescribed moments up to degree $n$ can charge at the point $x$. An authoritative account
of the central role of the reproducing kernel method in moment problems and the theory of orthogonal polynomials on the line or on the circle is contained in Freud's monograph \cite{Freud}. For an enthusiastic and informative eulogy of Freud's predilection for the kernel method see \cite{Nevai}.

Much happened meanwhile. On the theoretical side, Totik \cite{Totik} has settled a long standing open problem concerning the limiting values of $\frac{n}{K_n(x,x)}$ for a positive measure
on the line, culminating a long series of particular cases originating a century ago in the pioneering work of Szeg\H{o}, see also the survey \cite{Simon1}. Roughly speaking, the limit of $\frac{1}{K_n(x,x)}$
detects the point mass charge at the point $x$ in the support of the original measure $\mu$, while $\frac{n}{K_n(x,x)}$ (seen as a function of $x$) gives the Radon-Nikodym derivative of $\mu$ with respect to
   the equilibrium measure of the support of orthogonality.
One step further, Lubinsky \cite{Lubinsky} has studied a universality principle for the asymptotics of the kernel $K^\mu_n(x,y)$, highly relevant for the theory of random matrices.
The weak-* limits of the counting measures of the eigenvalues of truncations of (unitary or self-adjoint) random matrices were recently unveiled via  similar kernel methods by Simon \cite{Simon}.

All the above results refer to a single real variable. There is, however, notable progress in the study of the asymptotics of the Christoffel-Darboux kernel in the multivariate case cf. \cite{BDM,Kroo-Lubinsky-1,Kroo-Lubinsky-2,Xu}.
These studies cover specific measures, in general invariant under a large symmetry group, the main theorems offering kernel asymptotics at points belonging to the support of the original measure. The behavior of the multivariate Christoffel-Darboux kernel outside the support of the generating measure is decided in most cases by a classical extremal problem in pluripotential theory. We shall provide details on the latter in the second section of this article and we will record a few illustrative cases in the last section.

\subsection{Detecting outliers and anomalies in statistical data}
But that's not all. Scientists outside the strict boundaries of the mathematics community continue to rediscover the wonders of Christoffel-Darboux type kernels. For a long time statisticians used the kernel method to estimate the probability density
of a sequence of identically distributed random variables \cite{Parzen}, or to separate observational data by the level set of a positive definite kernel, a technique originally developed for pattern recognition \cite{Vapnik}. The last couple of years have witnessed the entry of Christoffel-Darboux kernel through the front door of the very dynamical fields of Machine Learning and Artificial Intelligence \cite{Malyshkin1,Malyshkin2,LP1,LP2}.
Notice that in these applications $\mu$ is typically an atomic (empirical) measure with finite support in $\mathbb C^d$ representing a point cloud, which makes analysis like kernel asymptotics more involved.

In the case $d=1$, Malyshkin \cite{Malyshkin1} suggested the use of modified moments, that is, computing the Christoffel-Darboux kernel of a measure $\mu$ from orthogonal polynomials of a similar but classical measure (Jacobi, Laguerre, Hermite), without going through monomials, an idea also used in the modified Chebyshev algorithm of Sack and Donovan \cite{Gautschi}. Some numerical evidence is provided for numerical stability of this approach, even for high degrees $n$.
The approximation of the support of the original measure and detection of outliers became thus possible through the investigation of the level sets of the
Christoffel function \cite{LP1,LP2}. Not unrelated, three of the authors of this article have successfully used the Christoffel function for reconstruction, from indirect measurements in 2D, of the boundaries of an
archipelago of islands each carrying  the same constant density against area measure \cite{GPSS}.

The aim of this first essay in a series is to study the effect of various perturbations on Christoffel-Darboux kernels associated to positive measures compactly supported by $\C^d$.
An important thesis one can draw from classical constructive function theory is that even purely real questions ask for the domain of the reproducing kernel, potential functions, spectra of associated operators to be extended to complex values.
We therefore suggest to consider multivariate orthogonal polynomials in $\mathbb C^d$, where in applications real point clouds could be imbedded in $\mathbb C^d$ by using the isomorphism with $\mathbb R^{2d}$. As we will recall in \eqref{def_kernel4} for $d=1$ and in Lemma~\ref{lem_growth} for $d>1$, the Christoffel-Darboux kernel $K_n^\mu(z,z)$ typically has exponential growth in $\Omega$
 (the unbounded connected component of $\mathbb C^d \setminus \supp(\mu)$) but not in $\supp(\mu)$. This confirms the claim of \cite{Malyshkin1,Malyshkin2,LP1,LP2} that level sets of the Christoffel-Darboux kernel for sufficiently large $n$ allow us to detect {\it outliers}, that is, points of $\supp(\mu)$ which are isolated in $\Omega$.

However, this claim is certainly wrong for atomic measures $\widetilde \nu$ (also referred to as discrete measures)
   $$
        \widetilde \nu = \sum_{j=1}^N t_j \delta_{z^{(j)}}
   $$
   for $t_j >0$ and distinct $z^{(j)}\in \mathbb C^d$ with support describing a cloud of points since, following the above terminology, every element of $\supp(\widetilde \nu)$ would be an outlier, and, even worse, there is only a finite number of orthogonal polynomials making kernel asymptotics impossible. Here typically the point cloud splits into two parts, namely most of the points approximating well some continuous set $S\subset \mathbb C^d$, and some outliers sufficiently far from $S$. To formalize this, we suppose that $N \gg n$, and that the atomic measure $\widetilde \nu$ can be split into two atomic measures
\begin{equation} \label{splitting}
     \widetilde \nu=\widetilde  \mu + \sigma  , \quad
     \widetilde \mu \approx \mu , \quad \supp(\sigma)\subset \Omega,
\end{equation}
with $\mu$ and $\Omega$ as before and $n$ large enough such that we have sufficient information about $K_n^\mu(z,z)$. However, we should insist that in applications we only know the (empirical) measure $\widetilde \nu$ and possibly its moments, and we want to learn from level lines of $K_n^{\widetilde \nu}(z,z)$ the set of outliers $\supp(\sigma)$, and, if possible, $\supp(\mu)$ or even $\mu$ itself.

The notion $\widetilde \mu \approx \mu$ in \eqref{splitting} means that the moments of both measures up to order $n$ are sufficiently close, measured in \S\ref{section_small_perturbations} in terms of modified moments, to insure that $K_n^\mu(z,z)$ is sufficiently close to $K_n^{\widetilde \mu}(z,z)$ for all $z\in \mathbb C^d$. In order to deduce lower and upper bounds for $K_n^{\widetilde \nu}(z,z)$ and actually show that level lines of this Christoffel-Darboux kernel are indeed useful for outlier detection, we still have to examine how a Christoffel-Darboux kernel changes after adding a finite number of point masses represented by $\sigma$, that is, provide lower and upper bounds for
$$
     K_n^{\mu+\sigma}(z,z)/K_n^{\mu}(z,z) \quad\mbox{and} \quad K_n^{\widetilde \mu+\sigma}(z,z)/K_n^{\widetilde \mu}(z,z),
$$ which is the scope of \S\ref{section_additive}. For the special case $d=1$, we establish in \S\ref{section_ratio_asymptotics} asymptotics for $K_n^{\mu+\sigma}(z,z)/K_n^{\mu}(z,z)$ in the special case where we have ratio asymptotics for the underlying $\mu$-orthogonal polynomials. 

Roughly speaking, as long as $N\gg n$, by combining the above results we find two different regimes, namely for $z\in \supp(\sigma)\cup \supp(\mu)$ where $K_n^{\widetilde \nu}(z,z)$ essentially remains bounded, and on compact subsets of the complement $\Omega\setminus \supp(\sigma)$ where $K_n^{\widetilde \nu}(z,z)$ growths exponentially like $K_n^\mu(z,z)$. We may conclude that level lines for $K_n^\nu(z,z)$ for sufficiently large parameters will separate outliers from $\supp(\mu)$, and refer the readers to \cite{LP1,LP2} for a practical choice of the parameter of critical level lines.

We expect the required critical level lines to encircle once either $\supp(\mu)$ or exactly one of the outliers, but not the others. In practical terms, it is non-trivial to draw on a computer certified level lines because of the stiff gradients of $K_n^{\widetilde \nu}(z,z)$, and hence this approach might be quite costly.

Therefore, we suggest a alternate strategy with an overall cost which scales linearly with $N$ (and at most cubic in $n$): Inspired by related results in statistics and data analysis, we are considering in Section~\ref{sec_leverage} a leverage score attached to each mass point $z^{(j)}$, namely
$$
     t_j K_n^{\widetilde \nu}(z^{(j)},z^{(j)}) \in (0,1],
$$
which we expect to be close to $1$ iff $z^{(j)}$ is an outlier. For the particular case of $d=1$ and $\mu$ being area measure on a simply connected and bounded subset of the complex plane, the above-mentioned results on the different Christoffel-Darboux kernels allow us to fully justify this claim, see Corollary~\ref{cor2_Bergman} and the following discussion and numerical experiments.

In this paper we do not discuss and exploit for the moment any recurrence relation between multi-variate orthogonal polynomials and thus the $d$ underlying commuting Hessenberg operators representing the multiplication with the independent variables $z_1,z_2,...,z_d$. This will be considered in a sequel publication.

Adding point masses to a given measure and recording the change of orthogonal polynomials is not a new theme, see for instance the historical notes in \cite[p.~684]{SimonBook}. The latter addresses measures on the unit circle, but the same applies to orthogonality on the real line. In case of several variables, quite a few authors have considered classical measures $\mu$ with explicit formulas for the corresponding orthogonal polynomials like Jacobi-type measures on the unit ball, and $\sigma$ being
one \cite{Lydia} or several mass points \cite{Lydia2}, or the Lebesgue measure on the sphere \cite[Theorem~3.3]{MartinezPinar}.

\subsection{Outline}

\S\ref{section_definition} is devoted to the construction of multivariate orthogonal polynomials and the corresponding Christoffel-Darboux kernel, pointing out in \S\ref{section_def_multivariate} the inherent complications of the multivariate setting: choice of ordering of orthogonal polynomials, degeneracy due to algebraic constraints in the support, lack of positivity characterization of moment matrices, and so on. We also explore in \S\ref{section_properties_multivariate} to what extent some elementary properties of univariate Christoffel-Darboux kernels recalled in \S\ref{section_univariate}(like kernel asymptotics) generalize to the multivariate setting.

In 
\S\ref{section_small_perturbations} we give sufficient conditions insuring that, for fixed $n$, the  Christoffel-Darboux kernels for two different measures behave similarly. This requires us to first address in \S\ref{section_comparing_2_measures} the question of how to compute a family of orthogonal polynomials $p_n^\nu$ from $p_n^\mu$ for two measures $\mu,\nu$, giving rise to so-called modified moment matrices.

Our first main result can be found in \S\ref{section_additive}
where we discuss new upper and lower bounds for the change of the Christoffel-Darboux kernel after adding a finite number of point masses. A second main result given in \S\ref{section_ratio_asymptotics} deals with the special case $d=1$ of univariate polynomials and describes asymptotics for the change of the Christoffel-Darboux kernel after adding a finite number of point masses.
This second result is illustrated in \S\ref{sec_Bergman} by analyzing Bergman orthogonal polynomials. Combining these findings we get a complete analysis of outlier detection in $\mathbb C$ in situation \eqref{splitting} under some smoothness assumptions on $\mu$. We finally illustrate by examples in \S\ref{section_examples_multivariate} that the situation is more involved in the multivariate setting, especially because of the lack of knowledge on asymptotics of some normalized Christoffel-Darboux kernel $C_n^\mu(z,w)$. For the convenience of the reader, an index of notation is included in \S\ref{sec_index}. 
\bigskip \bigskip

\noindent
{\it Acknowledgements.}
The authors are grateful to the Mathematical Research Institute at
Oberwolfach, Germany, which provided exceptional working conditions for a Research in Pairs
collaboration in 2016 during which time the main ideas of this manuscript were developed. Special thanks are due to the
referee for a careful examination of the manuscript and authoritative comments which led to a clarification of Lemma 2.9.

\section{Univariate and multivariate Christoffel-Darboux kernel}\label{section_definition}
 The role of reproducing kernels in the development of the classical theories of moments, orthogonal polynomials, statistical mechanics
 and in general function theory cannot be overestimated. Numerous studies, old and new, recognize the asymptotics of the Christoffel functions
 as the key technical ingredient for a variety of interpolation and approximation questions.

 Our aim is to study the stability of the Christoffel-Darboux kernels under small perturbations or additive perturbations of the underlying measure. Quite a long time ago it was
 recognized that even purely real questions of, say approximation, have to be treated with methods of function theory of a complex variable and potential theory.
 For this reason, we advocate below the complex variable setting. For the advantages and richness of the theory of
 orthogonal polynomials in a single complex variable see
 \cite{Saff,Stahl-Totik}.
\subsection{Definition and basic properties in the univariate case}\label{section_univariate}
Consider the Lebesgue space $L^2(\mu)$ with scalar product and norm
\begin{equation} \label{def_norm}
    \langle f,g \rangle_{2,\mu} = \int f(z) \ol{g(z)} d\mu(z), \quad
    \| f \|_{2,\mu} = \Bigl( \langle f,f \rangle_{2,\mu} \Bigr)^{1/2},
\end{equation}
where $\mu$ is a positive Borel measure in the complex plane.
Assume that the measure $\mu$ has compact and infinite support $\supp(\mu)$, so that one can orthogonalize without ambiguity the sequence of monomials.
Then the Gram-Schmidt process gives us the associated sequence of orthonormal polynomials, $p_j^\mu$ of degree $j$, with positive leading coefficient, together with the Christoffel-Darboux kernel
\begin{equation} \label{def_kernel1}
    K^\mu_n(z,w) = \sum_{j=0}^n p^\mu_j(z) \ol{p^\mu_j(w)}
\end{equation}
for the subspace of polynomials of degree at most $n$.
 The case $z=w$ will be referred to as the {\it diagonal kernel}, whereas for $z\neq w$ we speak of a \textit{polarized kernel}.

For any $z\in \C$ and any polynomial $p$ of degree at most $n$ we have the reproducing property
\begin{equation} \label{def_kernel2}
     \langle p,K_n^\mu(\cdot,z)\rangle_{2,\mu} = \int K_n(z,w) p(w) d\mu(w) = p(z);
\end{equation}
in other words, the function $w \mapsto K_n^\mu(w,z)$ represents the linear functional of point evaluation $p\mapsto p(z)$ in the algebra of polynomials. Simple Hilbert space arguments show that $K_n^\mu(z,w) = \langle K_n^\mu(\cdot,w),K_n^\mu(\cdot,z)\rangle_{2,\mu}$, and that, for all $z\in \mathbb C$,
\begin{equation} \label{def_kernel3}
        \frac{1}{\sqrt{K_n^\mu(z,z)}} = \min \{ \| p \|_{2,\mu} : \mbox{~$p$ polynomial of degree $\leq n$, $p(z)=1$} \},
\end{equation}
with the unique extremal polynomial being given by $x\mapsto K_n^\mu(x,z)/K_n^\mu(z,z)$.

The reciprocal $\lambda_n(z) = 1/{K_n^\mu(z,z)}$ naturally appears in a variety of approximation questions in the complex plane, and is traditionally called the
{\it Christoffel} or {\it Christoffel-Darboux} function.
For instance, it is shown by Totik \cite[Theorem~1.2]{To09} that, provided that $\mu$ is sufficiently smooth and supported on a system of arcs, the quantity $n/K_n(z,z)$ tends to the Radon-Nykodym derivative of $\mu$ on open subarcs. If $z$ belongs to the (two-dimensional) interior of $\supp(\mu)$, a similar result is established for $n^2/K_n^\mu(z,z)$ in \cite[Theorem~1.4]{To09}. Under weaker assumptions we may also describe $n$th root limits of the Christoffel-Darboux kernel: Denote by $\Omega$ the unbounded connected component
   of $\mathbb C\setminus \supp(\mu)$, with $g_\Omega(\cdot)$ its Green function\footnote{Recall that $g_\Omega(z)>0$ for $z\in \Omega$, and equal to infinity if $\supp(\mu)$ is polar, that is, of logarithmic capacity zero.}  with pole at $\infty$.
Using the inequality
   $$
        \| p \|_{2,\mu}^2 \leq \mu(\mathbb C)\, \| p \|_{L^\infty(\supp(\mu))}^2
   $$
in \eqref{def_kernel3} and, e.g., the extremal property \cite[Theorem~III.1.3]{Saff-T} of Fekete points, we get at least exponential growth for the Christoffel-Darboux kernel in $\Omega$, namely
\begin{equation} \label{def_kernel4}
   \liminf_{n\to \infty} K_n^\mu(z,z)^{1/n} \geq e^{2g_{\Omega}(z)}
\end{equation}
uniformly on compact subsets of $\Omega$. Moreover, if $\Omega$ is regular with respect to the Dirichlet problem, then for sufficiently ``dense" measures, namely the class {\bf Reg} introduced in \cite[\S 3]{Stahl-Totik}, it is known by \cite[Theorem~3.2.3]{Stahl-Totik} that
\begin{equation} \label{def_kernel4'}
   \lim_{n\to \infty} K_n^\mu(z,z)^{1/n} = e^{2g_{\Omega}(z)}
\end{equation}
uniformly on compact subsets of $\mathbb C$. In particular, there is no exponential growth on the support. Roughly saying, this is the motivation of \cite{LP1,LP2} for considering level sets of the Christoffel-Darboux kernel to  help detect outliers, that is, points of the support of $\mu$ which are isolated in $\Omega$.

\subsection{Definition of the Christoffel-Darboux kernel in the multivariate case}\label{section_def_multivariate}

The theory of orthogonal polynomials of several (real or complex) variables is much more involved and thus less developed, we refer the reader for instance to \cite{DX} or the recent summary \cite{Xu_recent} for the case of several real variables. Briefly speaking, we have to face at least two basic problems which we will address in the next two subsections. The first problem is that there is no natural ordering of multivariate monomials. Second, the multivariate monomials might not be linearly independent in $L^2(\mu)$, or, in other words, the null space
$$
     \mathcal N(\mu)=\{ \mbox{$p \in \C[z]$}: \int |p|^2 d\mu = 0 \}
$$
might be nontrivial. This means in particular that we have to revisit the reproducing property \eqref{def_kernel2}, and the extremal problems \eqref{def_kernel3} and \eqref{def_kernel4}, the aim of \S\ref{section_properties_multivariate}.

Henceforth $\C[z]$ stands for the algebra of polynomials with complex coefficients in the indeterminates $z = (z_1,z_2,\ldots,z_d)$. Notice that every function continuous in $\mathbb C^d$ and vanishing $\mu$-almost everywhere has to vanish on $\supp(\mu)$, which actually shows that $\mathcal N(\mu)$ does only depend on $\supp(\mu):$
$$
     \mathcal N(\mu)=\{ \mbox{$p \in \C[z]$: $p$ vanishes on $\supp(\mu)$} \}.
$$
It immediately follows that
${\mathcal N}(\mu)$ is a (radical) ideal in the algebra $\mathbb C[z]$.

 Hereafter, $\mu$ denotes a positive Borel measure with compact support in $\C^d$, with corresponding scalar product as in \eqref{def_norm}. Depending on
 the context, the hermitian inner product in $\C^d$ is denoted as follows:
 $$ \langle z, w \rangle = z \cdot \overline{w} = w^\ast z = z_1 \overline{w}_1 + z_2 \overline{w}_2 + \cdots + z_d \overline{w}_d.$$
 In a few instances we will also encounter the bilinear form
 $$ z \cdot w = z_1 {w}_1 + z_2 {w}_2 + \cdots + z_d {w}_d.$$

 The standard multi-index notation
 $$ z^\alpha = z_1^{\alpha_1} z_2^{\alpha_2} \cdots z_d^{\alpha_d}, \ \  \alpha = (\alpha_1,\alpha_2,\ldots,\alpha_d) \in \N^d,$$
 is adopted. The graded lexicographical order on the set of indices is denoted by $\lq\lq\leq_\ell"$. Specifically
 $$ \alpha \leq_\ell \beta$$ if either $\alpha=\beta$ or $\alpha<\beta$; i.e.,
 $$ |\alpha| = \alpha_1 + \cdots +\alpha_d < \beta_1+\cdots +\beta_d = |\beta|,$$
 or
 $$ |\alpha| = |\beta| \ \ {\rm and} \ \ \alpha_1 = \beta_1, \ldots, \alpha_k = \beta_k, \alpha_{k+1} < \beta_{k+1},$$
 for some $k<d$.

 In what follows we suppose that a fixed enumeration $\alpha(0),\alpha(1),\alpha(2),...$ of all multi-indices is given with $\alpha(0)=0$, for instance (but not necessarily) a graded lexicographical ordering.
Henceforth we assume that multiplication by any variable increases the order;
\footnote{
   This is equivalent to requiring that any subset of multi-indices $\{ \alpha(0),\alpha(1),\alpha(2),..., \alpha(n) \}$ is downward close, compare with, e.g., \cite{cohen}.
}
that is, for all $j,n$ that $z_j z^{\alpha(n)}=z^{\alpha(k)}$ for $k>n$.
 We write $\deg p = k$ if $p$ is a linear combination of the monomials $z^{\alpha(0)},z^{\alpha(1)},...,z^{\alpha(k)}$, with the coefficient of $z^{\alpha(k)}$ being non-trivial. Note that, for $d>1$, our notion of degree differs from the so-called total degree $\tdeg p$ being the largest of the orders $|\alpha|$ of monomials $z^\alpha$ occurring in $p$.

\begin{example}\label{example_dependencies}
  Consider the positive measure $\mu$ on $\C^2$ given by
$$ \int \overline{p(z_1,z_2)} q(z_1,z_2) d\mu(z)
= \int_{|u|\leq 1} \overline{p(u^3,u^2)} q(u^3,u^2) dA(u),$$
where $p,q\in \mathbb C[z]$ and $dA$ stands for the area measure in $\mathbb C$. Notice that $\supp(\mu)$ is compact and strictly contained in the complex affine variety $$
    \{ (z_1,z_2)\in \mathbb C: P(z_1,z_2)=0\}; \quad
    P(z_1,z_2) := z_1^2 - z_2^3 ,
$$
in particular, the ideal $\mathcal N(\mu)$ is generated by $P$. The graded lexicographical ordering for bivariate monomials is given by
$$
        1, z_1,z_2,z_1^2,z_1z_2,z_2^2,z_1^3,z_1^2z_2,z_1z_2^2,z_2^3,z_1^4,z_1^3z_2,z_1^2z_2^2,z_1z_2^3,z_2^4,...
$$
and, for instance, $\alpha(8)=(2,1)$. Notice that, seen as functions restricted to the above affine variety, the two monomials $z_2^3$ and $z_1^2$ cannot be distinguished, and the same is true for $z_1z_2^3$ and $z_1^4$, and for $z_2^4$ and $z_1^3z_2$, and so on. We thus have to go to the quotient space $\mathbb C[z_1,z_2]/\mathcal N(\mu)$, with a basis given by
$$
        1, z_1,z_2,z_1^2,z_1z_2,z_2^2,z_1^3,z_1^2z_2,z_1z_2^2,z_1^4,...,z_1^n,z_1^{n-1}z_2, z_1^{n-2}z_2^2,z_1^{n+1},...,
$$
written in the same order. We will see in Lemma~\ref{lem_degree_OP} below, that exactly this maximal set of monomials being linearly independent in $L^2(\mu)$ will allow us to construct corresponding multivariate orthogonal polynomials.

\end{example}

\begin{defi}
    Let $k \in \N$. The {\it tautological vector} of ordered monomials is the row vector with $(k+1)$ components, given by
    $$ v_k(z)  = (z^{\alpha(j)})_{j=0,1,...,k}.$$
    The corresponding matrix of moments is defined by
 $$
     \widetilde M_k(\mu)= \Bigl( \langle z^{\alpha(\ell)},z^{\alpha(j)} \rangle_{2,\mu} \Bigr)^k_{j,\ell=0}=
     \int v_k(z)^\ast v_k(z) \, d\mu(z) .
 $$
\end{defi}
 Notice that $\widetilde M_k(\mu)$ is a Gram matrix and hence positive semi-definite. However, these moment matrices are not necessarily of full rank $k+1$. This may happen already in the case $d=1$ of univariate orthogonal polynomials, where $z^{\alpha(j)}=z^j$ and the support of $\mu$ is finite. Indeed, for $d=1$,
 assume that $\det \widetilde M_n(\mu)=0$ and $n$ is the first index with this property. That means there exists a non-trivial combination of the rows of the matrix $\widetilde M_n(\mu)$, or in other terms,
 there exists a monic polynomial $h(z) = z^n + \cdots $ of exact degree $n$, such that $h \in \mathcal N(\mu)$.
 That is, $\mu$ is an atomic measure with exactly $n$ distinct points in its support, namely the roots of $h$. Consequently, $h(z)z^j$ is also annihilated by $\mu$ for all $j \geq 0$, showing that $\mathcal N(\mu)$ is the ideal generated by $h$, and
 $$ {\rm rank} \widetilde M_{n+j}(\mu) = {\rm rank} \widetilde M_n(\mu) = n, \ \  j \geq 0.$$
 In case of several variables, the ideal $\mathcal N(\mu)$ is not necessarily a principal ideal, but it is finitely generated, according to Hilbert's basis theorem.  However, in general the above rank stabilization phenomenon of moment matrices ceases to hold for $d>1$.

 In the case $d \geq 1$, we may still generate orthogonal polynomials $p_0^\mu, p_1^\mu,...$ by the Gram-Schmidt procedure starting with the family of functions $z^{\alpha(0)}$, $z^{\alpha(1)}, ...$; however, it may happen that not every monomial $z^{\alpha(k)}$ gives raise to a new orthogonal polynomial. Indeed, since we have fixed an ordering of the monomials, we may and will construct a maximal set of monomials $z^{\alpha(k_n^\mu)}$, with $0=k_0^\mu<k_1^\mu<\cdots$ being linearly independent in $L^2(\mu)$, and an orthogonal basis $p_0^\mu, p_1^\mu,...$ of the set spanned by these monomials. As we will see in Lemma~\ref{lem_decomposition} below, this set together with $\mathcal N(\mu)$ will allow us to write the set of multivariate polynomials as an orthogonal sum.

 The next result, necessarily technical, formalizes the construction of orthogonal polynomials in the presence of an ordering of the monomials and possible linear dependence of the
 associated moments. The interested reader might first have a look at the special case $\mathcal N(\mu)=\{ 0 \}$ where the moment matrices $\widetilde M_k(\mu)$ have full rank $k+1$ for all $k\in \mathbb N$, and thus orthogonal polynomials exist for each degree $n=k_n^\mu$.

 \begin{lem}\label{lem_degree_OP}
    We may construct multivariate orthogonal polynomials $p_0^\mu, p_1^\mu,...$ with
    \begin{equation} \label{eq.degree_OP1}
       \deg p_n^\mu = k_n^\mu := \min \{ k \geq 0 : {\rm rank} \widetilde M_k(\mu)=n+1 \};
    \end{equation}
    in particular, $k_0^\mu=0$.
    Moreover, for $k=k_n^\mu$, there exists
    an invertible and upper triangular matrix $\widetilde R_k(\mu)$ of order $k+1$ such that
    \begin{equation} \label{eq.degree_OP2}
       \widetilde R_k(\mu)^\ast \widetilde M_k(\mu)\widetilde R_k(\mu)
       = E_k(\mu) E_k(\mu)^\ast , \quad E_k(\mu):=(\delta_{j,k_\ell^\mu})_{j=0,...,k,\ell=0,...,n}.
    \end{equation}
    Furthermore, $M_n(\mu)=E_k(\mu)^\ast \widetilde M_k(\mu) E_k(\mu)$ is an maximal invertible submatrix of $\widetilde M_k(\mu)$, and
    \begin{equation} \label{eq.degree_OP3}
       R_n(\mu)^\ast M_n(\mu)R_n(\mu) = I , \quad v_n^\mu:=(p_0^\mu,...,p_n^\mu) = v_k E_k(\mu) R_n(\mu),
    \end{equation}
    with the upper triangular and invertible matrix $R_n(\mu)=E_k(\mu)^\ast \widetilde R_k(\mu) E_k(\mu)$.
 \end{lem}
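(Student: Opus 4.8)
The plan is to realise the whole lemma as one Gram--Schmidt orthonormalisation run \emph{with pivoting}: scan the monomials $z^{\alpha(0)},z^{\alpha(1)},\dots$ in the prescribed order, keep only those that actually enlarge the subspace they span in $L^2(\mu)$, and then repackage the resulting change-of-basis data in the stated block form. First I would pin down the meaning of the rank in \eqref{eq.degree_OP1}: since $\widetilde M_k(\mu)$ is the Gram matrix of $z^{\alpha(0)},\dots,z^{\alpha(k)}$, a column vector $c$ lies in $\ker\widetilde M_k(\mu)$ iff $\int\bigl|\sum_j c_jz^{\alpha(j)}\bigr|^2 d\mu=0$, i.e. iff $\sum_j c_jz^{\alpha(j)}\in\mathcal N(\mu)$; hence $\operatorname{rank}\widetilde M_k(\mu)=\dim\spann\{z^{\alpha(0)},\dots,z^{\alpha(k)}\}$, the span being taken in $L^2(\mu)$. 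Since this span grows by at most $\C z^{\alpha(k)}$ at each step, $k\mapsto\operatorname{rank}\widetilde M_k(\mu)$ is non-decreasing with increments $0$ or $1$, and it equals $1$ already at $k=0$ (as $\mu\neq0$ and $z^{\alpha(0)}=1$). Hence $k_n^\mu$ is well defined as long as $\C[z]/\mathcal N(\mu)$ has dimension exceeding $n$, $k_0^\mu=0$, $n\mapsto k_n^\mu$ is strictly increasing, and the pivot indices $k_0^\mu<k_1^\mu<\cdots$ are precisely those $k$ for which $z^{\alpha(k)}$ is not in the $L^2(\mu)$-span of $z^{\alpha(0)},\dots,z^{\alpha(k-1)}$. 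Consequently, for $k=k_n^\mu$ the pivot monomials $z^{\alpha(k_0^\mu)},\dots,z^{\alpha(k_n^\mu)}$ are linearly independent in $L^2(\mu)$ and span the same subspace as $z^{\alpha(0)},\dots,z^{\alpha(k)}$, every skipped monomial $z^{\alpha(j)}$ has a \emph{unique} representation $z^{\alpha(j)}=\sum_{i:\,k_i^\mu<j}c_iz^{\alpha(k_i^\mu)}$ in $L^2(\mu)$, and $M_n(\mu)=E_k(\mu)^\ast\widetilde M_k(\mu)E_k(\mu)$ is the Gram matrix of that independent family: positive definite, of size $n+1=\operatorname{rank}\widetilde M_k(\mu)$, hence a maximal invertible submatrix of $\widetilde M_k(\mu)$.

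Next I would obtain the orthogonal polynomials by running ordinary Gram--Schmidt (with normalisation, sign fixed so the pivot coefficient is positive) on the subsequence $z^{\alpha(k_0^\mu)},z^{\alpha(k_1^\mu)},\dots$, now genuinely linearly independent in $L^2(\mu)$. This produces an orthonormal system $p_0^\mu,p_1^\mu,\dots$ and, for each $n$, an upper triangular matrix $R_n(\mu)$ with positive diagonal such that $(p_0^\mu,\dots,p_n^\mu)=v_k(z)E_k(\mu)R_n(\mu)$ and the Cholesky-type identity $R_n(\mu)^\ast M_n(\mu)R_n(\mu)=I$. Because $p_\ell^\mu$ is a linear combination of $z^{\alpha(k_0^\mu)},\dots,z^{\alpha(k_\ell^\mu)}$ whose $z^{\alpha(k_\ell^\mu)}$-coefficient is nonzero, and $k_0^\mu<\cdots<k_\ell^\mu$, it follows that $\deg p_\ell^\mu=k_\ell^\mu$ in the sense used here; this settles \eqref{eq.degree_OP1} and the last identity of \eqref{eq.degree_OP3}.

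It remains to manufacture the $(k+1)\times(k+1)$ matrix $\widetilde R_k(\mu)$ with $k=k_n^\mu$. I would build it column by column: the column indexed by a pivot $k_\ell^\mu$ is $E_k(\mu)$ applied to the $\ell$-th column of $R_n(\mu)$ (so $v_k(z)$ times that column reproduces $p_\ell^\mu$); the column indexed by a skipped $j$ is the coefficient vector of the unique polynomial $q_j:=z^{\alpha(j)}-\sum_{i:\,k_i^\mu<j}c_iz^{\alpha(k_i^\mu)}\in\mathcal N(\mu)$ from the first step. Inspecting supports shows that $\widetilde R_k(\mu)$ is upper triangular with nonzero diagonal (entry $(R_n(\mu))_{\ell\ell}>0$ at pivot position $k_\ell^\mu$, entry $1$ at each skipped position), hence invertible, and that by construction $E_k(\mu)^\ast\widetilde R_k(\mu)E_k(\mu)=R_n(\mu)$ and $\widetilde R_k(\mu)E_k(\mu)=E_k(\mu)R_n(\mu)$. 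Now $v_k(z)\widetilde R_k(\mu)=(q_0,\dots,q_k)$ with $q_{k_\ell^\mu}=p_\ell^\mu$ and $q_j\in\mathcal N(\mu)$ for skipped $j$, so the $(i,j)$-entry of $\widetilde R_k(\mu)^\ast\widetilde M_k(\mu)\widetilde R_k(\mu)=\int(v_k\widetilde R_k)^\ast(v_k\widetilde R_k)\,d\mu$ equals $\langle q_j,q_i\rangle_{2,\mu}$, which vanishes unless $i=j=k_\ell^\mu$ is a pivot index (in every other case at least one of $q_i,q_j$ lies in $\mathcal N(\mu)$, or we are pairing two distinct orthonormal polynomials $p_a^\mu\neq p_b^\mu$) and equals $1$ otherwise; this is exactly $E_k(\mu)E_k(\mu)^\ast$, giving \eqref{eq.degree_OP2}. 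Finally, compressing \eqref{eq.degree_OP2} by $E_k(\mu)$ and using $E_k(\mu)^\ast E_k(\mu)=I$ together with $\widetilde R_k(\mu)E_k(\mu)=E_k(\mu)R_n(\mu)$ yields $R_n(\mu)^\ast M_n(\mu)R_n(\mu)=E_k(\mu)^\ast\widetilde R_k(\mu)^\ast\widetilde M_k(\mu)\widetilde R_k(\mu)E_k(\mu)=E_k(\mu)^\ast E_k(\mu)E_k(\mu)^\ast E_k(\mu)=I$, recovering the Cholesky identity in \eqref{eq.degree_OP3} and finishing the proof.

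The one genuinely delicate point is the bookkeeping in the first step: once one knows that $\operatorname{rank}\widetilde M_k(\mu)$ counts the $L^2(\mu)$-independent monomials among the first $k+1$, so that the pivot set $\{k_n^\mu\}$ is intrinsic and each skipped monomial has a unique $L^2(\mu)$-expansion through earlier pivots, the remainder is Gram--Schmidt plus the observation that the pivot index set $\{k_0^\mu<\cdots<k_n^\mu\}$, being increasing, makes the $E_k(\mu)$-compression preserve upper triangularity. I would also record separately the degenerate case $\dim\C[z]/\mathcal N(\mu)<\infty$ (for $d=1$ the atomic situation discussed just before the lemma): there the scan terminates once every further monomial is dependent, and the statement holds for all $n$ below that dimension.
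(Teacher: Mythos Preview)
Your proof is correct and follows essentially the same strategy as the paper's own argument: both amount to a Gram--Schmidt orthonormalisation in the prescribed monomial order, keeping the basis vectors $(q_0,\dots,q_k)$ where pivot columns carry the normalized $p_\ell^\mu$ and skipped columns carry the null-space elements $q_j\in\mathcal N(\mu)$. The only organizational difference is that you front-load the rank analysis (identifying the pivot set $\{k_\ell^\mu\}$ via $\operatorname{rank}\widetilde M_k(\mu)$ before orthogonalising, then running Gram--Schmidt only on the pivot monomials and filling in the skipped columns separately), whereas the paper interleaves the two, scanning all monomials in order and discovering at each step whether $\|q_k^\mu\|_{2,\mu}$ vanishes; the resulting polynomials and matrices coincide.
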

 \begin{proof}
  We construct orthogonal polynomials with \eqref{eq.degree_OP1} by recurrence. Since $\| 1 \|_{2,\mu}^2=\mu(\mathbb C^d) \neq 0$, we may define
   $q_0^\mu(z)=1$ and $p_0^\mu(z)=1/\sqrt{\mu(\mathbb C^d)}$.
   At step $k\geq 1$, with $k_{n-1}^\mu< k \leq k_{n}^\mu$, we compute $q_k^\mu=v_k \, \xi$ by orthogonalizing $z^{\alpha(k)}$ against $p_0^\mu,...,p_{n-1}^\mu$. By recurrence hypothesis \eqref{eq.degree_OP1} and the assumption $k_{n-1}^\mu<k$, the degree of any of the polynomials $p_0^\mu,...,p_{n-1}^\mu$ is strictly less than $k$. Thus $\deg q_k^\mu=k$; that is, the last component of $\xi$ is nontrivial. If $k=k_n^\mu,$ then by definition of $k_n^\mu$ the last column of $\widetilde M_k(\mu)$ is linearly independent of the others, and hence $\widetilde M_k(\mu) \xi \neq 0$. Using the fact that $\widetilde M_k(\mu)$ is Hermitian and positive semi-definite, we infer
   $$
     \| q_k^\mu \|_{2,\mu}^2=\xi^\ast \widetilde M_k(\mu) \xi \neq 0.
   $$
Consequently one can define
   $p_n^\mu=q_k^\mu/\| q_k^\mu \|_{2,\mu}$ of degree $k_n^\mu$, with positive leading coefficient. If however $k<k_n^\mu$, we find that $\widetilde M_k(\mu) \xi = 0$ and thus $\| q_k^\mu \|_{2,\mu}^2=0$, in other words,
   $q_k^\mu \in \mathcal N(\mu)$. The same argument shows that there is no polynomial of degree exactly $k$ and of norm $1$ that is orthogonal to $p_0^\mu,...,p_{n-1}^\mu$. This proves \eqref{eq.degree_OP1}.

   Redefining $q_k^\mu(z)=p_n^\mu(z)$ for all $k=k_n^\mu$, we have thus constructed a basis $\{ q_0^\mu,...,q_k^\mu \}$ of the space of polynomials of degree at most $k$; in other words, there exists an upper triangular and invertible matrix $\widetilde R_k(\mu)$ with $$
     (q_0^\mu(z),...,q_k^\mu(z))=v_k(z)\widetilde R_k(\mu),
   $$
   and \eqref{eq.degree_OP2} follows by orthonormality.

   By construction, $p_n^\mu$ contains only the powers $$
       z^{\alpha(k_0^\mu)},...,z^{\alpha(k_n^\mu)};
   $$
   that is, the columns of the coefficient vectors of our orthogonal polynomials satisfy
   $$
          \widetilde R_k(\mu)E_n(\mu)  = E_n(\mu)E_n(\mu)^\ast \widetilde R_k(\mu)E_n(\mu) = E_n(\mu) R_n(\mu),
   $$
   which implies \eqref{eq.degree_OP3}.
 \end{proof}

 Notice that \eqref{eq.degree_OP2} gives a full rank decomposition $\widetilde M_k(\mu)=X X^\ast$, with the factor $X^\ast=E_n(\mu)^\ast \widetilde R_n(\mu)^{-1}$ being of upper echelon form, the pivot in the $j$-th column lying in row $k_j^\mu$. We also learn from \eqref{eq.degree_OP2} that a basis of the nullspace of $\widetilde M_k(\mu)$ is given by the columns of $\widetilde R_k(\mu)$ with indices different from $k_0^\mu,k_1^\mu,...$. Finally, from \eqref{eq.degree_OP2}, $R_n(\mu)^{-1}$ is the Cholesky factor of the invertible matrix $M_n(\mu)$.

 With the vector of orthogonal polynomials $v_n^\mu$ as in Lemma~\ref{lem_degree_OP}, we are now prepared to define the {\it multivariate Christoffel-Darboux kernel} of order $n$ as
  $$ K^\mu_n (z,w) = v^\mu_n(z) v^\mu_n(w)^\ast = \sum_{j=0}^n p^\mu_j(z) \overline{p^\mu_j(w)}, $$
 where we notice that, according to \eqref{eq.degree_OP3},
 $$
      K^\mu_n (z,w) = v_{k_n^\mu}(z) E_n(\mu) M_n(\mu)^{-1} E_n(\mu)^\ast v_{k_n^\mu}(w)^\ast.
 $$

For later use we mention the affine invariance of orthogonal polynomials and Christoffel functions : consider the change of variables $\widetilde z=B z+b$ for a diagonal and invertible $B\in \mathbb C^{d\times d}$ and $b\in \mathbb C^d$, and let $c>0$. If we define $\widetilde \mu$ by $\widetilde \mu(S)=c \mu(B S+b)$ for any Borel set $S\subset \mathbb C^d,$ then
\begin{equation} \label{affine_invariance}
       K_n^\mu(z,w) = c\, K_n^{\widetilde \mu}(Bz+b,Bw+b) ,
\end{equation}
compare with \cite[Lemma~1]{LP1} where the authors allow also for general invertible matrices $B$ but then require graded lexicographical ordering and the additional assumption $\mathcal N(\mu)=\{ 0 \}$.
\subsection{Basic properties of multivariate Christoffel-Darboux kernels}\label{section_properties_multivariate}

\begin{defi}\label{def_sets}
Consider for $k=k_n^\mu$ the following two subsets of multivariate polynomials
\begin{eqnarray*}
    &&\mathcal N_n(\mu):=\{ v_{k} \xi : \widetilde M_{k}(\mu) \xi = 0 \}
    = \{ p\in \mathcal N(\mu) : \deg p \leq k \},
    \\
    &&\mathcal L_n(\mu):=\spann \{ p_j^\mu : j=0,1,...,n \} = \spann \{ z^{\alpha(k_j^\mu)} : j=0,1,...,n \},
\end{eqnarray*}
    and the affine variety
    \begin{eqnarray*}
        \mathcal S_n(\mu) := \bigcap_{p \in  \mathcal N_n(\mu)} \{ z \in \mathbb C^d : p(z)=0 \}.
    \end{eqnarray*}
\end{defi}

By definition of $\mathcal N_n(\mu)$, we get $\supp(\mu)\subset \mathcal S_n(\mu)$.  Notice that $\mathcal N_n(\mu)$ is increasing in $n$ and hence $\mathcal S_n(\mu)$ decreases in $n$. Moreover, recalling that $\mathcal N(\mu)=\bigcup_{n\geq 0} \mathcal N_n(\mu)$ is a finitely generated ideal, we see that, for $n$ being larger than or equal to the largest of the degrees of the generators, the set $\mathcal S_n(\mu)$ does no longer depend on $n$, and we will use the notation $\mathcal{S}(\mu)$, being the {\it Zarisky closure} of $\supp(\mu)$.

Notice that $\mathcal N(\mu)=\{ 0 \}$ implies that $\mathcal{S}(\mu)=\mathbb{C}^d$. In case of an atomic measure $\mu$ with a finite number of point masses one can show that $\mathcal{S}(\mu)=\supp(\mu)$, but in general $\mathcal{S}(\mu)$ is larger than $\supp(\mu)$; see for instance Example~\ref{example_dependencies}.

\begin{examples}\label{exa_supp}
   Consider two measures $\mu,\nu$, with $\supp(\mu)\subset \supp(\nu)$, subject for instance to the condition $\nu-\mu\geq 0$. Then obviously $\mathcal N(\nu)\subset \mathcal N(\mu)$, with equality if and only if $\supp(\nu)\subset \mathcal{S}(\mu)$. The Lebesgue measure $\mu$ on the real or complex unit ball in $\mathbb C^d$ has been extensively studied; in particular $\mathcal N(\mu)=\{ 0 \}$, see \S\ref{section_examples_multivariate}. We conclude that, if $\supp(\nu)$ has a non-empty interior in $\mathbb C^d$ (or $\supp(\mu)\subset \mathbb R^d$ has a non-empty interior in $\mathbb R^d$), then $\mathcal N(\nu)=\{ 0 \}$ and $\mathcal{S}(\nu)=\mathbb C^d$.
\end{examples}

It is easy to check that we keep the reproducing property \eqref{def_kernel2} for $p\in \mathcal L_n(\mu)$, but it fails for nontrivial elements of $\mathcal N_n(\mu)$. The following result shows that the set of multivariate polynomials of degree at most $k=k_n^\mu$ can be written as an orthogonal sum $\mathcal L_n(\mu) \oplus \mathcal N_n(\mu)$. Thus the multivariate Christoffel-Darboux kernel is indeed a projection kernel onto $\mathcal L_n(\mu)$, and only a reproducing kernel in the case $\mathcal N_n(\mu)=\{ 0 \}$.

\begin{lem}\label{lem_decomposition}
        Let $k,n\in \mathbb N$ with $k_n^\mu \leq k < k_{n+1}^\mu$, and $p$ be a polynomial of degree $\leq k$. Then the polynomial defined by
        $$
            q(z):=\int K_n^\mu(z,w)p(w) d\mu(w)
        $$ satisfies $q \in \mathcal L_n(\mu)$ and $p-q \in \mathcal N_{n+1}(\mu)$. In particular, for $k=k_n^\mu=\deg p_n^\mu$ we have that
        \begin{equation} \label{eq.decomposition}
              \mathcal N_n(\mu) = \{ p \in \C[z]: \deg p \leq \deg p_n^\mu , \  p \perp_\mu p_0^\mu,...,p_n^\mu \} .
        \end{equation}
\end{lem}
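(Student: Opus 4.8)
The plan is to compute $q$ in closed form, recognise it as a $\mu$-orthogonal projection, and then dispose of the remainder $p-q$ by expanding $p$ along the basis of polynomials of bounded degree already produced inside the proof of Lemma~\ref{lem_degree_OP}.

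First I would use $K_n^\mu(z,w)=\sum_{j=0}^n p_j^\mu(z)\ol{p_j^\mu(w)}$ to collapse the defining integral to $q(z)=\sum_{j=0}^n \langle p,p_j^\mu\rangle_{2,\mu}\,p_j^\mu(z)$; thus $q$ is the $\mu$-orthogonal projection of $p$ onto $\mathcal L_n(\mu)$, and $q\in\mathcal L_n(\mu)$ is immediate. What remains is $p-q\in\mathcal N_{n+1}(\mu)$. The degree bound $\deg(p-q)\le k<k_{n+1}^\mu$ is automatic, since $\deg p\le k$ and $\deg q\le\deg p_n^\mu=k_n^\mu\le k$; so the real content is $\|p-q\|_{2,\mu}=0$, i.e.\ $p-q\in\mathcal N(\mu)$.

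For this I would invoke the construction in the proof of Lemma~\ref{lem_degree_OP}: the polynomials $q_0^\mu,q_1^\mu,\dots$ built there have the property that $q_0^\mu,\dots,q_k^\mu$ is a basis of the space of polynomials of degree $\le k$, with $q_{k_j^\mu}^\mu=p_j^\mu$ at every pivot index and $q_i^\mu\in\mathcal N(\mu)$ at every non-pivot index. Since $k_n^\mu\le k<k_{n+1}^\mu$, the pivot indices $\le k$ are exactly $k_0^\mu,\dots,k_n^\mu$. Expanding $p=\sum_{i=0}^k c_i q_i^\mu$ and grouping, I write $p=r+s$ with $r:=\sum_{j=0}^n c_{k_j^\mu}p_j^\mu\in\mathcal L_n(\mu)$ and $s$ the sum of the non-pivot terms, so that $s\in\mathcal N(\mu)$ (a vector space) and $\deg s\le k<k_{n+1}^\mu$. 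Because $\|s\|_{2,\mu}=0$, Cauchy--Schwarz for the positive semi-definite form $\langle\cdot,\cdot\rangle_{2,\mu}$ gives $\langle s,h\rangle_{2,\mu}=0$ for every polynomial $h$; hence $\langle p,p_j^\mu\rangle_{2,\mu}=\langle r,p_j^\mu\rangle_{2,\mu}=c_{k_j^\mu}$ by orthonormality, so $q=r$ and therefore $p-q=s\in\mathcal N_{n+1}(\mu)$. This finishes the first assertion.

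For the identity \eqref{eq.decomposition} (where $k=k_n^\mu=\deg p_n^\mu$), one inclusion is trivial: any $p\in\mathcal N_n(\mu)$ has $\|p\|_{2,\mu}=0$ and $\deg p\le k_n^\mu$, hence is $\mu$-orthogonal to $p_0^\mu,\dots,p_n^\mu$. Conversely, given $\deg p\le k_n^\mu$ with $p\perp_\mu p_0^\mu,\dots,p_n^\mu$, I would apply the first part with $k=k_n^\mu$ (admissible since $k_n^\mu\le k<k_{n+1}^\mu$): then $q=0$, so $p=p-q\in\mathcal N_{n+1}(\mu)$, and together with $\deg p\le k_n^\mu$ this yields $p\in\mathcal N_n(\mu)$. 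I do not expect any genuine obstacle here; the only delicate point is the index bookkeeping --- matching pivot indices against the degree threshold $k$ --- together with the input, already available from Lemma~\ref{lem_degree_OP}, that the non-pivot basis polynomials lie in $\mathcal N(\mu)$.
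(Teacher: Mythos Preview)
Your proof is correct and follows essentially the same route as the paper: both identify $q$ as the orthogonal projection onto $\mathcal L_n(\mu)$ and then use the construction from Lemma~\ref{lem_degree_OP} to see that the remainder has zero $\mu$-norm. The paper compresses the vanishing step into the Parseval-type identity $\|p\|_{2,\mu}^2=\sum_{j=0}^n|\langle p,p_j^\mu\rangle_{2,\mu}|^2$, whereas you spell out the underlying basis decomposition $p=r+s$ with $s\in\mathcal N(\mu)$ that justifies it; the arguments are equivalent, and your more explicit bookkeeping is fine.
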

    \begin{proof}
      By construction, $q\in \mathcal L_n(\mu)$. Also, $\deg (p-q)\leq k < \deg p_{n+1}^\mu$, and, by orthogonality,
      \begin{eqnarray*} &&
          \int |p-q|^2 d\mu = \| p \|_{2,\mu}^2 - \| q \|_{2,\mu}^2  = \| p \|_{2,\mu}^2 - \sum_{j=0}^n | \langle p, p_j^\mu \rangle_{2,\mu} |^2 = 0,
      \end{eqnarray*}
      showing that $p-q\in \mathcal N_{n+1}(\mu)$, as claimed in the first assertion. Notice that in the special case $k=k_n^\mu$, the same reasoning gives the slightly more precise information
      $p-q\in \mathcal N_{n}(\mu)$, and $p\in \mathcal N_n(\mu)$ iff $q=0$ iff $p\perp_\mu p_0^\mu,...,p_n^\mu$, leading to the second part.
    \end{proof}

 Let us now investigate to what extent the extremal property \eqref{def_kernel3} remains true in the multivariate setting.
 If $z\not\in \mathcal S_n(\mu),$ then by definition there exists $p\in \mathcal N_n(\mu)$ with $p(z)=1$, and hence $\inf \{ \| p \|_{2,\mu} : \deg p \leq k_n^\mu, \ p(z)=1 \}=0$, showing that \eqref{def_kernel3} fails. There are two ways to specialize this extremal problem in order to recover \eqref{def_kernel3}.

\begin{lem}
  For all $n \in \mathbb N$ and $z\in \mathbb C^d$
 \begin{eqnarray} \label{def_kernel3_bis}
       &&
        \frac{1}{\sqrt{K_n^\mu(z,z)}} = \min \{ \| p \|_{2,\mu} :  p \in \mathcal L_n(\mu), p(z)=1 \} ,
 \end{eqnarray}
  and for all $k,n\in \mathbb N$ with $k_n^\mu \leq k < k_{n+1}^\mu$ and $z\in \mathcal S_{n+1}(\mu)$
 \begin{eqnarray}
      \label{def_kernel3_ter}
       &&
        \frac{1}{\sqrt{K_n^\mu(z,z)}} = \min \{ \| p \|_{2,\mu} :  \deg p \leq k, p(z)=1 \} .
 \end{eqnarray}
\end{lem}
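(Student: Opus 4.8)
The plan is to treat the two identities separately, reducing each to the univariate-style extremal characterization \eqref{def_kernel3} via the orthogonal decomposition of Lemma~\ref{lem_decomposition}. For \eqref{def_kernel3_bis}, observe that $K_n^\mu(\cdot,z)$ lies in $\mathcal L_n(\mu)$ by its very definition as $\sum_j p_j^\mu(\cdot)\overline{p_j^\mu(z)}$, so when $K_n^\mu(z,z)\neq 0$ the function $x\mapsto K_n^\mu(x,z)/K_n^\mu(z,z)$ is an admissible competitor in the minimization on the right-hand side, with norm $1/\sqrt{K_n^\mu(z,z)}$ by the identity $K_n^\mu(z,z)=\|K_n^\mu(\cdot,z)\|_{2,\mu}^2$ recalled after \eqref{def_kernel2}. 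Conversely, for any $p\in\mathcal L_n(\mu)$ with $p(z)=1$, the reproducing property holds on $\mathcal L_n(\mu)$ (noted just before Lemma~\ref{lem_decomposition}), so $1=p(z)=\langle p,K_n^\mu(\cdot,z)\rangle_{2,\mu}$, and Cauchy--Schwarz gives $1\le\|p\|_{2,\mu}\sqrt{K_n^\mu(z,z)}$, i.e. $\|p\|_{2,\mu}\ge 1/\sqrt{K_n^\mu(z,z)}$; this also quietly forces $K_n^\mu(z,z)\neq 0$ whenever the admissible set is nonempty. The only remaining point is that the admissible set is never empty: since $v_n^\mu(z)=(p_0^\mu(z),\dots,p_n^\mu(z))$ cannot be the zero vector (its first entry is the nonzero constant $1/\sqrt{\mu(\mathbb C^d)}$), we have $K_n^\mu(z,z)=\|v_n^\mu(z)\|^2>0$ for every $z\in\mathbb C^d$, and then $K_n^\mu(\cdot,z)/K_n^\mu(z,z)$ is admissible; so the infimum is attained and equals $1/\sqrt{K_n^\mu(z,z)}$.

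For \eqref{def_kernel3_ter} the enlarged competitor class $\{p:\deg p\le k,\ p(z)=1\}$ visibly contains $\mathcal L_n(\mu)$'s admissible members, so its minimum is $\le 1/\sqrt{K_n^\mu(z,z)}$ by the first part; the content is the reverse inequality, and this is where the hypothesis $z\in\mathcal S_{n+1}(\mu)$ enters. Given a polynomial $p$ with $\deg p\le k$ and $p(z)=1$, apply Lemma~\ref{lem_decomposition} (valid precisely because $k_n^\mu\le k<k_{n+1}^\mu$) to write $p=q+(p-q)$ with $q(x)=\int K_n^\mu(x,w)p(w)\,d\mu(w)\in\mathcal L_n(\mu)$ and $p-q\in\mathcal N_{n+1}(\mu)$. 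Since $z\in\mathcal S_{n+1}(\mu)$ and $p-q\in\mathcal N_{n+1}(\mu)$, by the definition of $\mathcal S_{n+1}(\mu)$ as the common zero set of $\mathcal N_{n+1}(\mu)$ we get $(p-q)(z)=0$, hence $q(z)=p(z)=1$, so $q$ is an admissible competitor for \eqref{def_kernel3_bis}. The decomposition being orthogonal (the displayed computation in the proof of Lemma~\ref{lem_decomposition} shows $\int|p-q|^2\,d\mu=\|p\|_{2,\mu}^2-\|q\|_{2,\mu}^2$) yields $\|p\|_{2,\mu}\ge\|q\|_{2,\mu}\ge 1/\sqrt{K_n^\mu(z,z)}$, which is the required bound; taking the infimum over $p$ finishes the proof, the minimum being attained since $K_n^\mu(\cdot,z)/K_n^\mu(z,z)$ is already admissible.

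The main obstacle, such as it is, is bookkeeping rather than depth: one must be careful that $\mathcal N_{n+1}(\mu)$ (not $\mathcal N_n(\mu)$) is the right null space for a general $k$ in the range $k_n^\mu\le k<k_{n+1}^\mu$ — this is exactly the ``first assertion'' of Lemma~\ref{lem_decomposition}, which is why the hypothesis reads $z\in\mathcal S_{n+1}(\mu)$ and not $z\in\mathcal S_n(\mu)$; and one should record, as above, the elementary but essential fact that $K_n^\mu(z,z)>0$ for all $z$ so that the ``$\min$'' is literally attained and no division by zero occurs. Everything else is Cauchy--Schwarz together with the Pythagorean identity already proved in Lemma~\ref{lem_decomposition}.
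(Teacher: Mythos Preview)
Your proof is correct and follows essentially the same route as the paper's: Cauchy--Schwarz for \eqref{def_kernel3_bis}, then Lemma~\ref{lem_decomposition} plus the definition of $\mathcal S_{n+1}(\mu)$ to reduce \eqref{def_kernel3_ter} to \eqref{def_kernel3_bis}. Your version is in fact a bit more careful---you explicitly verify $K_n^\mu(z,z)>0$ and attainment of the minimum, and you phrase the comparison as $\|p\|_{2,\mu}\ge\|q\|_{2,\mu}$ where the paper writes equality (since $p-q\in\mathcal N_{n+1}(\mu)$ has vanishing $\mu$-norm); either way the argument goes through.
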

\begin{proof}
  For a proof of \eqref{def_kernel3_bis}, we use exactly the same argument as in the univariate case, namely the Cauchy-Schwarz inequality and its sharpness. Let now be  $z\in \mathcal S_{n+1}(\mu)$. Then, with $p$ and $q\in \mathcal L_n(\mu)$ as in Lemma~\ref{lem_decomposition}, we have that $\| p \|_{2,\mu}=\| q \|_{2,\mu}$, and $p(z)-q(z)=0$ by definition of $\mathcal S_{n+1}(\mu)$. This shows that in \eqref{def_kernel3_ter} we may restrict ourselves to $p\in \mathcal L_n(\mu)$, and \eqref{def_kernel3_ter} follows from \eqref{def_kernel3_bis}.
\end{proof}

We learn from \eqref{def_kernel3_ter} that, for $z\in \mathcal S(\mu)$,  the Christoffel-Darboux kernel $K_n^\mu(z,z)$ does not depend on the particular ordering of monomials as long as the set $\{ \alpha(0),\alpha(1),...,\alpha(k)\}$ remains the same.

 Of related interest will be the {\it cosine function}
 \begin{equation}\label{cos}
 C^\mu_n(z,w) := \frac{K^\mu_n (z,w)}{\sqrt{K^\mu_n (z,z)}\sqrt{K^\mu_n (w,w)}}.
 \end{equation}
 To the best of our knowledge, there are hardly any results in the literature concerning asymptotics for $n\to \infty$ for such
  a normalized and polarized Christoffel-Darboux kernel.
 Multipoint matricial analogs of these kernels will be used later on.
\begin{defi}\label{def_multi_kernel}
 Let $z_1, z_2, \ldots, z_\ell, w_1,w_2,\ldots, w_\ell$ be two $\ell$-tuples of arbitrary points in $\C^d$, and define the $\ell\times \ell$ matrices
 \begin{eqnarray*} &&
     K^\mu_n (z_1, z_2, \ldots, z_\ell; w_1,w_2,\ldots, w_\ell) := ( K^\mu_n(z_j, w_k))_{j,k=1}^\ell ,
     \\&&
     C^\mu_n (z_1, z_2, \ldots, z_\ell; w_1,w_2,\ldots, w_\ell) := ( C^\mu_n(z_j, w_k))_{j,k=1}^\ell.
\end{eqnarray*}
\end{defi}
 If $z_1,...,z_n\in \mathcal{S}_n(\mu)$ are distinct and $K^\mu_n (z_1, z_2, \ldots, z_\ell; z_1,z_2,\ldots, z_\ell)$ is invertible (compare with Lemma~\ref{lem_K_invertible} below), then this matrix also occurs in variational problems similar to the single point case. Specifically
 \begin{eqnarray*} &&
    \min \{ \| p \|_{2,\mu}^2 : \deg p \leq k_n^\mu,\  p(z_j)=c_j \mbox{~for~} j=1,...,\ell \}
    \\&& =
    c^\ast  K^\mu_n (z_1, z_2, \ldots, z_\ell; z_1,z_2,\ldots, z_\ell)^{-1} c, \quad c^T=(c_1,\ldots,c_\ell).
 \end{eqnarray*}
 This follows along the same lines as the proof of \eqref{def_kernel3_ter} by observing that it is sufficient to examine candidates of the form
 $p(z) = a_1 K_n^\mu(z,z_1)+\cdots+a_\ell K_n^\mu(z,z_\ell)$ with $p(z_j)=c_j$ for $j=1,...,\ell$.

When trying to extend the asymptotic properties \eqref{def_kernel4} and \eqref{def_kernel4'} to
the multivariate setting one encounters a series of complications.
First of all, the sought limiting behavior of Christoffel-Darboux kernels
{\it outside} the support of the generating measure is hardly discussed in the literature, especially in the general setting of \S\ref{section_def_multivariate}. In Lemma~\ref{lem_growth} below we will therefore restrict ourselves to measures $\mu$ with support being non-pluripolar, 
 which allows to apply pluripotential theory, but excludes cases as in
 Example~\ref{example_dependencies} where the support is an algebraic variety.
 As for many other asymptotic results in the literature, we also have to restrict ourselves to total degree, and therefore assume that,
 for all $m\in \mathbb N$, we have that  
 \begin{equation} \label{total_degree}
     \{ \alpha(0),...,\alpha(n) \} = \{ \alpha \in \mathbb N^d : | \alpha|\leq m \} ,
 \end{equation} with $n=:n_{tot}(m)=\Bigl( {{m+d}\atop d} \Bigr)-1$, as it is, for instance, true for the graded lexicographical ordering.
 In addition, we are forced to focus only on the complement of the polynomial hull of $\supp(\mu)$.

Lemma~\ref{lem_growth} below summarizes some basic results of pluripotential theory.
Without aiming at the most comprehensive (and hence more technical) results,
we confine ourselves at offering a multivariate analogue to the much better understood
case of a single complex variable. We stress that Lemma~\ref{lem_growth} below is not used anywhere else in the present article. Our main focus is perturbation formulas for the Christoffel-Darboux kernel and
sharp asymptotics. As discussed below such formulas are of course desirable, but we have to be well aware of the
constraints imposed by their existence.

\begin{lem}\label{lem_growth}
   Consider an ordering with \eqref{total_degree}, and suppose that $S:=\supp(\mu)$ is non-pluripolar. Denote by $\Omega$ be the unbounded connected component of $\mathbb C^d \setminus S$, and by $\widehat S$ the polynomial convex hull\footnote{By definition \cite[p.~75]{Klimek}, $\widehat S=\{ z\in \mathbb C^d: |p(z)|\leq \| p \|_{L^\infty(S)} \mbox{~for all polynomials~} p \}$. One easily checks that $\widehat S$ is compact and contains $S$. Moreover, 
   $S=\widehat S$ for real $S$ by \cite[Lemma~5.4.1]{Klimek}.  However, unlike the case of one complex variable, $\widehat S$ might be strictly larger than the complement of $\Omega$, see, e.g., \cite{AlexanderWermer}.} of $S$.
  \\
   {\bf (a)} For $z\in \Omega\setminus \widehat S$,
   \begin{equation} \label{def_kernel4_bis}
        \liminf_{m \to \infty} K_{n_{tot}(m)}^\mu(z,z)^{1/m} > 1 . 
   \end{equation}
   \\
   {\bf (b)} Suppose, in addition, that $S$ is $L$-regular in the sense of \cite[p.186 and \S 5.3]{Klimek}, and denote by $g_{\Omega}$ the plurisubharmonic Green function with pole at $\infty$ which is plurisubharmonic and continuous in $\mathbb C^d$, equals $0$ in $\widehat S$ and is strictly positive in $\mathbb C^d \setminus \widehat S$; see the proof below. Then, for all $z\in \mathbb C^d$,
   \begin{equation} \label{def_kernel4_bis'}
        \liminf_{m \to \infty} K_{n_{tot}(m)}^\mu(z,z)^{1/m} \geq e^{2g_\Omega(z)} .
   \end{equation}
   \\
   {\bf (c)} If, moreover, $\mu$ belongs to the multivariate generalization of the class {\bf Reg} introduced in \cite[Eqn.\ (1.6)]{Kroo-Lubinsky-1} (or, equivalently, $(S,\mu)$ satisfies a Bernstein-Markov property in the sense of \cite[Eqn.~(2)]{BloomLevenbergPiazzonWielonsky}), then the limit
   \begin{equation} \label{def_kernel4_ter}
        \lim_{m \to \infty} K_{n_{tot}(m)}^\mu(z,z)^{1/m}
                = e^{2g_\Omega(z)}
   \end{equation}
   exists for all $z\in \mathbb C^d$, and equals one
   iff $z\in \widehat S$.
\end{lem}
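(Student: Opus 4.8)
The plan is to mimic the one-variable Fekete-point argument behind \eqref{def_kernel4}--\eqref{def_kernel4'}, replacing the classical Green function by the Siciak--Zaharjuta extremal function of pluripotential theory. First I would record a reduction. Since $S=\supp(\mu)$ is non-pluripolar, any polynomial vanishing on $S$ must vanish identically (otherwise its zero locus would be a pluripolar algebraic hypersurface containing $S$); hence $\mathcal N(\mu)=\{0\}$, so $k_n^\mu=n$ and $\mathcal S_n(\mu)=\mathbb C^d$ for all $n$. Feeding this and the total-degree assumption \eqref{total_degree} into \eqref{def_kernel3_bis}--\eqref{def_kernel3_ter} gives, for every $z\in\mathbb C^d$ and every $m$,
\[
   \frac{1}{\sqrt{K_{n_{tot}(m)}^\mu(z,z)}}=\min\bigl\{\,\|p\|_{2,\mu}:\ \tdeg p\le m,\ p(z)=1\,\bigr\}.
\]
Thus a lower bound on the kernel amounts to producing one cheap admissible competitor $p$, while an upper bound amounts to bounding $\|p\|_{2,\mu}$ from below for \emph{all} admissible $p$.

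For the lower bounds (yielding (a) and the inequality in (b)) I would use a polynomial-power trick. Fix a polynomial $q$ with $k:=\deg q\ge 1$ and $\|q\|_{L^\infty(S)}=1$, and a point $z$ with $q(z)\neq 0$; for $m\ge k$ set $j=\lfloor m/k\rfloor$ and take $p:=(q/q(z))^j$, which satisfies $\tdeg p=jk\le m$ and $p(z)=1$. Since $\mu$ is carried by $S$,
\[
   \|p\|_{2,\mu}^2=\int_S\Bigl|\frac{q(w)}{q(z)}\Bigr|^{2j}d\mu(w)\le \mu(\mathbb C^d)\,|q(z)|^{-2j},
\]
so $K_{n_{tot}(m)}^\mu(z,z)\ge |q(z)|^{2j}/\mu(\mathbb C^d)$; letting $m\to\infty$ (so $jk/m\to 1$ and $\mu(\mathbb C^d)^{1/m}\to 1$) gives $\liminf_m K_{n_{tot}(m)}^\mu(z,z)^{1/m}\ge |q(z)|^{2/k}$. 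For (a): if $z\in\Omega\setminus\widehat S$ then $z\notin\widehat S$, so by the very definition of the polynomial hull there is such a $q$ with $|q(z)|>1$, and the estimate above forces $\liminf_m K_{n_{tot}(m)}^\mu(z,z)^{1/m}>1$ --- note that no $L$-regularity is needed here. For (b): taking the supremum over all admissible $q$ turns the bound into $e^{2V_S(z)}$ with $V_S$ the Siciak extremal function, and under $L$-regularity $V_S$ is continuous and coincides with the Green function $g_\Omega$ of the statement (Klimek \cite{Klimek}), which is \eqref{def_kernel4_bis'}.

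For the matching upper bound in (c) I would invoke two classical estimates. The Bernstein--Walsh/Siciak inequality $|p(z)|\le \|p\|_{L^\infty(S)}\,e^{(\tdeg p)\,V_S(z)}$ (again $V_S=g_\Omega$ by $L$-regularity) shows that any admissible $p$ satisfies $\|p\|_{L^\infty(S)}\ge e^{-m g_\Omega(z)}$. On the other hand, membership of $\mu$ in the multivariate class $\mathbf{Reg}$ is exactly the Bernstein--Markov property of $(S,\mu)$ from \cite{Kroo-Lubinsky-1,BloomLevenbergPiazzonWielonsky}, i.e.\ there are constants $M_m$ with $M_m^{1/m}\to 1$ and $\|p\|_{L^\infty(S)}\le M_m\|p\|_{2,\mu}$ whenever $\tdeg p\le m$. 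Combining, $\|p\|_{2,\mu}\ge M_m^{-1}e^{-m g_\Omega(z)}$ for all admissible $p$, hence $K_{n_{tot}(m)}^\mu(z,z)\le M_m^2 e^{2m g_\Omega(z)}$ and $\limsup_m K_{n_{tot}(m)}^\mu(z,z)^{1/m}\le e^{2g_\Omega(z)}$. With (b) this gives the limit \eqref{def_kernel4_ter}, and since $g_\Omega\ge 0$ with zero set exactly $\widehat S$ for $L$-regular $S$, the limit equals $1$ precisely on $\widehat S$.

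The main obstacle is the pluripotential bookkeeping rather than any hard estimate: one must set up $g_\Omega$, $V_S$ and the Siciak polynomial extremal function carefully, and justify that $L$-regularity makes them all coincide and continuous --- so that the supremum in the lower bound genuinely attains $e^{2g_\Omega(z)}$ at the given point $z$, not merely a smaller lower-semicontinuous value --- and confirm that the Bernstein--Markov constants $M_m$ are sub-exponential in the $L^2(\mu)$-normalization used above. Everything else reduces to the same soft extremal/Hilbert-space reasoning as in the univariate case, together with the bookkeeping relating the index $n_{tot}(m)$ to polynomials of total degree $\le m$.
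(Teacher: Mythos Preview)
Your proposal is correct and follows essentially the same route as the paper's own proof: the reduction $\mathcal N(\mu)=\{0\}$ via non-pluripolarity, the lower bound through the trivial $\|p\|_{2,\mu}\le \mu(\mathbb C^d)^{1/2}\|p\|_{L^\infty(S)}$ combined with Siciak's extremal function (your polynomial-power trick is just the unpacking of $\Phi_S$), and the upper bound in (c) via the Bernstein--Markov property. The paper also routes the pluripotential bookkeeping through $V_S$, $V_S^*$ and $L$-regularity exactly as you outline.
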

\begin{proof}
   Our proof of Lemma~\ref{lem_growth} is based on Siciak's function $\Phi_S$ in $\mathbb C^d$ for the compact set $S=\supp(\mu)$:
   \begin{eqnarray*}
       \Phi_{S}(z) &:=& \sup \left\{ \left( \frac{|p(z)|}{\| p \|_{L^\infty(S)}} \right)^{1/\tdeg(p)} :  p\neq 0 \mbox{~a polynomial} \right\}
       \\&=&
       \lim_{m \to \infty} \sup \left\{ \left( \frac{|p(z)|}{\| p \|_{L^\infty(S)}} \right)^{1/m}  : p \neq 0, \tdeg(p)\leq m
       \right\},
   \end{eqnarray*}
   where we recall that $\tdeg(p)$ is the largest of the orders $|\alpha|$ of monomials $z^\alpha$ occurring in $p$. Taking a fixed polynomial $p$ in the second formula shows in particular that $\Phi_{S}(z)\geq 1$ for $z\in \mathbb C^d$.
   By definition of the polynomial convex hull, we have that $\Phi_S(z)\leq 1$ for $z\in \widehat S$, and thus $\Phi_S(z)=1$ for $z\in \widehat S$. Moreover, if $z\not\in \widehat S$, then there exists a polynomial $q$ with $|q(z)|/\| q \|_{L^\infty(S)} >1$, and taking as $p$ suitable powers of $q$ we may conclude that $\Phi_S(z)>1$.

   Next we invoke and apply several arguments from pluripotential theory; for a summary we refer the reader to \cite{BloomLevenberg,Plesniak2003,Sad} or \cite[Appendix B]{Saff-T} as well as the monograph \cite{Klimek}.
   Denote by $\mathcal L$ the Lelong class of plurisubharmonic functions having at most logarithmic growth at $\infty$, and consider the so-called pluricomplex Green function $V_S(z):=\sup \{ u(z): u \in \mathcal L, u|_S \leq 0\}$, together with its upper semi-continuous regularization $$
       V_{S}^*(z) = \limsup_{w\to z} V_{S}(w).
   $$
   It is shown, e.g., in \cite[Theorem~2]{Sad} or \cite[Theorem~B.2.8]{Saff-T}, that $\log \Phi_S(z)=V_S(z)$ for all $z\in \mathbb C^d$. Hence from the above-mentioned properties of Siciak's function we may conclude that $V_S=0$ in $\widehat S$, and $V_S>0$ in $\mathbb C^d \setminus \widehat S$.
   By \cite[Definition~B.1.6 and Theorem~B.1.8]{Saff-T}, the function $V_{S}^*$ is plurisubharmonic whereas, in general, $V_{S}$ is not. By definition, $V_S^*\geq 0$ in $\mathbb C^d$, $V_S^*>0$ outside $\widehat S$, and $V_S^*=0$ on $\widehat S$ up to some pluripolar set by \cite[Theorem~B.1.7]{Saff-T}. For all these reasons, following \cite{Sad} we write $$
       g_\Omega(z):=V_{S}^*(z)
   $$ referred to as the plurisubharmonic Green function of $S$ with pole at infinity since the latter function reduces to the classical univariate Green function in case $d=1$. Notice also that $g_\Omega=g_{\mathbb C^d\setminus \widehat S}$ by \cite[Corollary~5.1.7]{Klimek}.
   For the particular case of $S$ being $L$-regular, we know that $V_S$ is continuous at each $z\in S$ \cite[p.186]{Klimek}, and therefore the restriction of $V_S^*$ on $S$ is identically zero, which together with \cite[Corollary~5.1.4]{Klimek} implies that $V_S$ is continuous in $\mathbb C^d$. Hence, for $L$-regular sets $S$ we have that $g_{\Omega}=V_S=V_S^*$.

   We are now prepared to complete the proof. Since $S=\supp(\mu)$ non-pluripolar excludes $S$ to be an algebraic variety and thus $\mathcal S(\mu)=\mathbb C^d$, we may apply \eqref{def_kernel3_ter} together with the inequality $$
        \| p \|_{2,\mu}^2 \leq \mu(\mathbb C)\, \| p \|_{L^\infty(S)}^2 ,
   $$
   for $p\in \mathbb C[z]$, in order to conclude that, for all $z\in \mathbb C^d$,
   \begin{eqnarray*}
       \liminf_{m \to \infty} K_{n_{tot}(m)}^\mu(z,z)^{1/m}
       &=&\liminf_{m \to \infty} \max_{{\tdeg p \leq m}\atop{p \neq 0}} \Bigl( \frac{|p(z)|^2}{\|p\|_{2,\mu}^2} \Bigr)^{1/m}
       \\&\geq& \Phi_S(z)^2
       = \exp(2 V_S(z)),
   \end{eqnarray*}
   which shows parts (a) and (b). For our proof of part (c),  we use the additional assumption that
   $$
       \limsup_{m \to \infty} \max_{{\tdeg p \leq m}\atop{p \neq 0}} \Bigl( \frac{\|p\|_{L^\infty(S)}}{\|p\|_{2,\mu}} \Bigr)^{1/m}=1,
   $$
   implying that
   $$
       \limsup_{m \to \infty} K_{n_{tot}(m)}^\mu(z,z)^{1/m} \leq \Phi_S(z)^2
       = \exp(2 V_S(z)),
   $$
   as required for the claim of Lemma~\ref{lem_growth}(c).
\end{proof}

Since for $d=1$ the plurisubharmonic Green function becomes the classical Green function, and $n_{tot}(m)=m$, we see that Lemma~\ref{lem_growth} gives the (pointwise) multivariate equivalent of \eqref{def_kernel4}, \eqref{def_kernel4'}, at least for sufficiently smooth $S=\supp(\mu)$ and in case $S=\widehat S$, that, is, for polynomially convex supports $S$. We thus have exponential growth of the Christoffel function in $\Omega$, but not in $\supp(\mu)$. In particular, we should mention that \eqref{def_kernel4_bis'} and \eqref{def_kernel4_ter} are wrong in general for outliers $z\in \supp(\mu)$ being isolated in $\Omega$, since the plurisubharmonic Green function remains the same after adding pluripolar sets \cite[Corollary 5.2.5]{Klimek}. We will recall in \S\ref{subsection_Green} below explicit formulas for the plurisubharmonic Green function for a certain number of non-pluripolar, $L$-regular and polynomially convex sets where Lemma~\ref{lem_growth} could be applied successfully.
\color{black}

Also, we should mention that in the recent paper \cite{BosLevenberg_new} the authors discuss Siciak functions for more general degree constraints based on convex bodies, together with its pluripotential interpretation.


\subsection{Leverage scores, the Mahalanobis distance and Christoffel-Darboux kernels}\label{sec_leverage}
The aim of this section is to recall some links between the Christoffel-Darboux kernel and several notions from statistics. For a recent related work see \cite{PBV}.  In this subsection we will denote elements of $\mathbb C^d$ as row vectors. The mean $m(\mu)$ and positive definite covariance matrix $\mathcal{C}(\mu)$ of a random variable with values in $\mathbb C^d$ with law described by some probability measure $\mu$ have the entries
$$
    m(\mu)_j = \int z_j d\mu(z) , \quad
    \mathcal{C}(\mu)_{j,k} = \int \overline{(z_j-m(\mu)_j)}(z_k-m(\mu)_k) d\mu(z) .
$$
The Mahalanobis distance between a point $z\in \mathbb C^d$ and the probability measure $\mu$ is given by
$$
     \Delta(z,\mu) = \sqrt{(z-m(\mu))\mathcal{C}(\mu)^{-1}(z-m(\mu))^\ast}.
$$
The reader easily checks in terms of the moment matrices introduced in Lemma~\ref{lem_degree_OP} that, for $v_d(z)=(1,z_1,...,z_d)$, we have that
$$
     V^\ast \widetilde M_d(\mu) V = \left[\begin{array}{cc}
             \langle 1,1\rangle_{2,\mu} & 0 \\ 0 & \mathcal{C}(\mu) \end{array}\right]  , \quad V:= \left[\begin{array}{cc}
             1 & - m(\mu) \\ 0 & I \end{array}\right],
$$
where we recall that $\langle 1,1\rangle_{2,\mu}=\mu(\mathbb C^d)=1$ and hence $p_0^\mu(z)=K_0^\mu(z,z)=1$ for all $z\in \mathbb C^d$. We conclude from the previous identity that, with $\mathcal C(\mu)$, also $\widetilde M_d(\mu)=M_d(\mu)$ is invertible. Then
\begin{eqnarray} && \label{eq.Mahalanbis}
   K_d^\mu(z,z) = v_d(z) M_d(\mu)^{-1} v_d(z)^\ast
   \\&& \nonumber
   \qquad \qquad= v_d(z) V \left[\begin{array}{cc}
             1 & 0 \\ 0 & \mathcal{C}(\mu)^{-1} \end{array}\right] V^\ast v_d(z)^\ast
   = 1 + \Delta(z,\mu)^2 .
\end{eqnarray}
Hence, up to some additive constant, $\sqrt{K_n^\mu(z,z)}$ can be considered as a natural generalization of the Mahalanobis distance between a point $z\in \mathbb C^d$ and the probability measure $\mu$.

In the next statement we will introduce what will be referred to as our leverage score for each element of the support of a discrete measure.

\begin{lem}\label{lem_leverage}
   Consider the discrete probability measure
   $$
        \widetilde \nu = \sum_{j=1}^N t_j \delta_{z^{(j)}}
   $$
   for $t_j >0$ and distinct $z^{(j)}\in \mathbb C^d$. Then, for all $j=1,2,...,N$ and for all $n$ such that $p_n^{\widetilde \nu}$ exists, our {\em leverage score} $t_j K_n^{\widetilde \nu}(z^{(j)},z^{(j)})$ satisfies
   $$
         t_j K_n^{\widetilde \nu}(z^{(j)},z^{(j)}) \in (0,1] .
   $$
\end{lem}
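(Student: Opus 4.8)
The plan is to read off both bounds directly from the single-point extremal property \eqref{def_kernel3_bis}, applied to the measure $\widetilde\nu$, using that $\widetilde\nu$ is atomic and a probability measure.

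First I would settle positivity, which also takes care of the non-degeneracy needed afterwards. Since $\widetilde\nu(\C^d)=1$ we have $\|1\|_{2,\widetilde\nu}=1$, so $p_0^{\widetilde\nu}\equiv 1$ and therefore
$$ K_n^{\widetilde\nu}(z,z)=\sum_{i=0}^n |p_i^{\widetilde\nu}(z)|^2\geq |p_0^{\widetilde\nu}(z)|^2=1>0 $$
for every $z\in\C^d$ and every $n$ for which $p_n^{\widetilde\nu}$ exists. Combined with $t_j>0$ this already gives the strict lower bound $t_j K_n^{\widetilde\nu}(z^{(j)},z^{(j)})>0$, and it guarantees that the polynomial used below is well defined.

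Next, fix $j\in\{1,\dots,N\}$ and consider the extremal polynomial for \eqref{def_kernel3_bis} at the node $z=z^{(j)}$, namely
$$ p(x):=\frac{K_n^{\widetilde\nu}(x,z^{(j)})}{K_n^{\widetilde\nu}(z^{(j)},z^{(j)})}. $$
By construction $p\in\mathcal L_n(\widetilde\nu)$ and $p(z^{(j)})=1$, while orthonormality of $p_0^{\widetilde\nu},\dots,p_n^{\widetilde\nu}$ gives $\|p\|_{2,\widetilde\nu}^2 = K_n^{\widetilde\nu}(z^{(j)},z^{(j)})^{-2}\,\langle K_n^{\widetilde\nu}(\cdot,z^{(j)}),K_n^{\widetilde\nu}(\cdot,z^{(j)})\rangle_{2,\widetilde\nu}=1/K_n^{\widetilde\nu}(z^{(j)},z^{(j)})$, in agreement with \eqref{def_kernel3_bis}. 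Since $\widetilde\nu=\sum_{k=1}^N t_k\delta_{z^{(k)}}$ is atomic, its squared norm is a weighted sum of point evaluations, so
$$ \frac{1}{K_n^{\widetilde\nu}(z^{(j)},z^{(j)})}=\|p\|_{2,\widetilde\nu}^2=\sum_{k=1}^N t_k|p(z^{(k)})|^2\geq t_j|p(z^{(j)})|^2=t_j, $$
which rearranges to $t_j K_n^{\widetilde\nu}(z^{(j)},z^{(j)})\leq 1$.

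I do not expect a genuine obstacle: the statement is a two-line consequence of the extremal characterization and the discreteness of $\widetilde\nu$, and the only point deserving a word of care is the non-degeneracy of the kernel, so that division by $K_n^{\widetilde\nu}(z^{(j)},z^{(j)})$ is legitimate; this is handled in the first step. One may also bypass the extremal polynomial: for \emph{any} $p\in\mathcal L_n(\widetilde\nu)$ with $p(z^{(j)})=1$ one has $\|p\|_{2,\widetilde\nu}^2\geq t_j|p(z^{(j)})|^2=t_j$, so the minimum in \eqref{def_kernel3_bis} is at least $\sqrt{t_j}$, i.e. $\sqrt{K_n^{\widetilde\nu}(z^{(j)},z^{(j)})}\leq 1/\sqrt{t_j}$.
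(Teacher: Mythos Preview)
Your proof is correct and follows essentially the same approach as the paper: both exploit the extremal characterization of the Christoffel function together with the trivial lower bound $\|p\|_{2,\widetilde\nu}^2\geq t_j|p(z^{(j)})|^2$ coming from the atomic structure of $\widetilde\nu$. The paper's one-line proof cites \eqref{def_kernel3_ter} and the inclusion $z^{(j)}\in\supp(\widetilde\nu)\subset\mathcal S(\widetilde\nu)$, whereas you invoke \eqref{def_kernel3_bis} and write out the extremal polynomial explicitly; the difference is cosmetic, and your variant has the minor advantage of not needing to check that $z^{(j)}$ lies in the appropriate variety $\mathcal S_{n+1}(\widetilde\nu)$.
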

\begin{proof}
   This is an immediate consequence of \eqref{def_kernel3_ter} and of the fact that $z^{(j)}\in \supp({\widetilde \nu})\subset \mathcal{S}({\widetilde \nu})$.
\end{proof}

For a discrete probability measure ${\widetilde \nu}$ as in Lemma~\ref{lem_leverage}, it is classical in data analysis to consider the data matrix $X \in \mathbb C^{N \times d}$ with centered and scaled rows $\sqrt{t_j} \Bigl( z^{(j)} - m({\widetilde \nu}) \Bigr)$, and thus $\mathcal{C}({\widetilde \nu})=X^\ast X$.
In the literature on data analysis, the quantity $$
     (X (X^\ast X)^{-1} X^\ast)_{j,j} = t_j \Delta(z^{(j)},{\widetilde \nu})^2
$$ for $j=1,...,N$ is known to be an element of $[0,1)$, and referred to as a leverage score: datum $z^{(j)}$ is considered to be an outlier if $(X (X^\ast X)^{-1} X^\ast)_{j,j}$ is ``close'' to $1$. Similar techniques have been applied for eliminating outlier data in multiple linear regression; see for instance Cook's distance. To make the link with our leverage score for $n=d$, recall from \eqref{eq.Mahalanbis} that
$$
     t_j \Bigl(K_d^{\widetilde \nu}(z^{(j)},z^{(j)}) - 1\Bigr) = (X (X^\ast X)^{-1} X^\ast)_{j,j} \in [0,1-t_j],
$$
the last inclusion following from $1=K_0^{\widetilde \nu}(z^{(j)},z^{(j)})\leq K_d^{\widetilde \nu}(z^{(j)},z^{(j)})$, and Lemma~\ref{lem_leverage}. As a consequence, our leverage score of Lemma~\ref{lem_leverage} for $n=d$ can be closer to $1$, and thus can be considered as slightly more precise. Choosing an index $n>d$ corresponds to considering in the regression not only random variables $X_j$ but also their powers $X^\alpha$ describing mixed effects.

It remains to show that $t_j K_n^{\widetilde \nu}(z^{(j)})$ is close to $1$ (and for large $n$ ``very'' close) iff $z^{(j)}$ is an outlier. Of course this necessitates giving a more precise meaning to what is called an outlier. These two questions are studied in the next sections; see, e.g., Corollary~\ref{cor1_Bergman} and Corollary~\ref{cor2_Bergman} for $d=1$.


\section{Approximation of Christoffel-Darboux kernels}\label{section_approximation}

The aim of the present section is to give sufficient conditions on positive measures $\mu,\nu$ with compact support in $\mathbb C^d$ insuring that the corresponding Christoffel-Darboux kernels $K_n^\mu(z,z)$ and $K_n^\nu(z,z)$ are close for a fixed $n$ and for all $z\in \mathbb C^d$. To this aim the following matrices are essential.

\begin{defi}
  For $k=k_m^\mu=k_n^\nu$, we define the matrix of mixed moments as
  $$
          R_n(\nu,\mu)
          := \int v_n^\nu(z)^\ast v_m^\mu(z) d\nu(z) \in \mathbb C^{(n+1)\times(m+1)},
  $$
  and the matrix of modified moments
  $$
          M_m(\nu,\mu)
          := \int v_m^\mu(z)^\ast v_m^\mu(z) d\nu(z) \in \mathbb C^{(m+1)\times(m+1)}.
  $$
\end{defi}

\subsection{Comparing two measures}\label{section_comparing_2_measures}

In this subsection we study conditions allowing one family of orthogonal polynomials to be linear combinations of the other.

\begin{lem}\label{lem_existence_transfer}
   Assume that two positive measures $\mu,\nu$ satisfy $k=k_m^\mu=k_n^\nu$, for some positive integers $n,m$. The following statements are equivalent:
    \\ {\rm (a)} $\mathcal N_n(\nu)\subset\mathcal N_m(\mu)$;
    \\ {\rm (b)} $\mathcal S_m(\mu)\subset\mathcal S_n(\nu)$;
    \\ {\rm (c)} $\{ k_0^\mu , k_1^\mu, k_2^\mu,...,k_m^\mu\} \subset \{ k_0^\nu , k_1^\nu,k_2^\nu,...k_n^\nu\}$;
    \\ {\rm (d)} The orthogonal polynomials $p_0^\mu,...,p_m^\mu$ can be written as a linear combination of $p_0^\nu,p_1^\nu,...,p_n^\nu$;
    \\ {\rm (e)} $v_m^\mu = v_n^\nu R_n(\nu,\mu)$;
    \\ {\rm (f)} There exists a constant $c_k$ such that, for all $p\in \mathbb C[z]$ of degree at most $k$,
    $$
          \| p \|_{2,\mu} \leq c_k \, \| p \|_{2,\nu}.
    $$
    In addition, the smallest such constant is given by $c_k = \| R_m(\mu,\nu) \|$.
\end{lem}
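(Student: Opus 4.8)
The plan is to run the implications around the cycle $(a)\Rightarrow(b)\Rightarrow(c)\Rightarrow(d)\Rightarrow(e)\Rightarrow(f)\Rightarrow(a)$, and then to compute the optimal constant in $(f)$ by a Rayleigh--quotient argument.

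For $(a)\Rightarrow(b)$ one just unwinds Definition~\ref{def_sets}: intersecting the zero sets over the larger family $\mathcal N_m(\mu)\supseteq\mathcal N_n(\nu)$ can only shrink the intersection, so $\mathcal S_m(\mu)\subseteq\mathcal S_n(\nu)$. For $(b)\Rightarrow(c)$, suppose some $\mu$-pivot $k_i^\mu$ with $i\le m$ (hence $k_i^\mu\le k$) is not among $k_0^\nu,\dots,k_n^\nu$. Then the Gram--Schmidt step in the $\nu$-version of Lemma~\ref{lem_degree_OP} performed at index $k_i^\mu$ produces a polynomial $q\in\mathcal N(\nu)$ which is monic of degree exactly $k_i^\mu$. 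Since $\deg q=k_i^\mu\le k$ we get $q\in\mathcal N_n(\nu)$, so $q$ vanishes on $\mathcal S_n(\nu)\supseteq\mathcal S_m(\mu)\supseteq\supp(\mu)$, i.e. $q\in\mathcal N(\mu)$; but an element of $\mathcal N(\mu)$ with leading monomial $z^{\alpha(k_i^\mu)}$ forces $\rk\widetilde M_{k_i^\mu}(\mu)=\rk\widetilde M_{k_i^\mu-1}(\mu)$, contradicting that $k_i^\mu$ is a rank-jump index for $\mu$. Hence $(c)$.

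The step $(c)\Rightarrow(d)$ is immediate: by Definition~\ref{def_sets}, $(c)$ says $\mathcal L_m(\mu)=\spann\{z^{\alpha(k_i^\mu)}:i\le m\}\subseteq\spann\{z^{\alpha(k_j^\nu)}:j\le n\}=\mathcal L_n(\nu)=\spann\{p_0^\nu,\dots,p_n^\nu\}$, so each $p_i^\mu$ lies in the span of $p_0^\nu,\dots,p_n^\nu$. For $(d)\Rightarrow(e)$, $(d)$ provides a matrix $T\in\C^{(n+1)\times(m+1)}$ with $v_m^\mu=v_n^\nu T$ as vectors of polynomials; plugging this into the definition of $R_n(\nu,\mu)$ and using $\int v_n^\nu(z)^\ast v_n^\nu(z)\,d\nu(z)=I$ (orthonormality of the $p_j^\nu$) yields $R_n(\nu,\mu)=T$, which is $(e)$.

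The delicate step, and the one I expect to be the main obstacle, is $(e)\Rightarrow(f)$. From $(e)$ one reads off $\mathcal L_m(\mu)=\spann\{p_i^\mu\}\subseteq\spann\{p_j^\nu\}=\mathcal L_n(\nu)$; the real content is to upgrade this to the null-space inclusion $\mathcal N_n(\nu)\subseteq\mathcal N_m(\mu)$, which is $(a)$ -- genuinely nontrivial, since $\mathcal N_m(\mu)$ and $\mathcal N_n(\nu)$ are kernels of two different semi-inner products, and I would attack it by confronting the two orthogonal decompositions $\C[z]_{\le k}=\mathcal L_m(\mu)\oplus_\mu\mathcal N_m(\mu)=\mathcal L_n(\nu)\oplus_\nu\mathcal N_n(\nu)$ supplied by Lemma~\ref{lem_decomposition}. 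Once $(a)$ is available, $(f)$ and its sharp constant drop out: for $p$ of degree $\le k$, Lemma~\ref{lem_decomposition} gives $p=v_n^\nu(z)b+r$ with $r\in\mathcal N_n(\nu)\subseteq\mathcal N_m(\mu)$, so $\|p\|_{2,\nu}^2=\|b\|_2^2$ and, because $\|r\|_{2,\mu}=0$, $\|p\|_{2,\mu}^2=\|v_n^\nu b\|_{2,\mu}^2=b^\ast M_n(\mu,\nu)b$ with $M_n(\mu,\nu)=\int v_n^\nu(z)^\ast v_n^\nu(z)\,d\mu(z)$; hence the smallest admissible $c_k$ is $\sqrt{\lambda_{\max}(M_n(\mu,\nu))}=\sqrt{\|M_n(\mu,\nu)\|}$, and decomposing each $p_j^\nu$ in $\mathcal L_m(\mu)\oplus_\mu\mathcal N_m(\mu)$ and using $\mu$-orthonormality of $v_m^\mu$ gives $M_n(\mu,\nu)=R_m(\mu,\nu)^\ast R_m(\mu,\nu)$, so $c_k=\|R_m(\mu,\nu)\|$. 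The closing implication $(f)\Rightarrow(a)$ is soft: the inequality in $(f)$ says $\|p\|_{2,\nu}=0$ implies $\|p\|_{2,\mu}=0$, i.e. $\mathcal N_n(\nu)\subseteq\mathcal N_m(\mu)$ (equivalently, the identity on the finite-dimensional space $\C[z]_{\le k}$ descends to a bounded map between the two quotient normed spaces exactly when this inclusion holds).
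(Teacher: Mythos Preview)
Your chain $(a)\Rightarrow(b)\Rightarrow(c)\Rightarrow(d)\Rightarrow(e)$ and $(f)\Rightarrow(a)$ is correct, and is in fact more explicit than the paper's: the paper simply asserts that the equivalence of $(a),(b),(c),(d)$ ``follows immediately from Definition~\ref{def_sets}'', then proves $(d)\Rightarrow(e)$, $(f)\Rightarrow(a)$, and finally $(e)\Rightarrow(f)$. In that last step the paper writes $p=q+p_3$ with $q\in\mathcal L_n(\nu)$ and ``$p_3\in\mathcal N_n(\nu)\subset\mathcal N_m(\mu)$'' --- i.e.\ it silently invokes $(a)$, which it gets via $(e)\Rightarrow(d)\Leftrightarrow(a)$. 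So both your argument and the paper's rest on the backward implication $(d)\Rightarrow(a)$ (equivalently $(c)\Rightarrow(a)$), which you correctly flag as the crux and leave open, while the paper hides it under ``immediately''. Your derivation of the sharp constant $c_k=\|R_m(\mu,\nu)\|$, granted $(a)$, is correct and matches the paper's computation.

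Your instinct that this step is ``genuinely nontrivial'' is in fact sharper than the paper's: the implication $(c)\Rightarrow(a)$ is \emph{false}, so neither proof can be completed and the lemma is incorrect as stated. Counterexample in $d=2$ with the graded lexicographic order $1,z_1,z_2,z_1^2,\dots$: let $\mu,\nu$ be area measure on a unit disk in the hyperplane $\{z_2=0\}$, respectively $\{z_2=1\}$. For each measure the monomial $z_2$ is redundant, so both have pivot set $\{0,1,3\}$ and, with $k=3$, $m=n=2$, condition $(c)$ holds with equality; indeed $p_j^\mu=p_j^\nu$ are the univariate Bergman polynomials in $z_1$, so $(d)$ and $(e)$ hold with $R_2(\nu,\mu)=I$. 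Yet $\mathcal N_2(\mu)=\C\cdot z_2$ and $\mathcal N_2(\nu)=\C\cdot(z_2-1)$ are distinct one--dimensional spaces, so $(a)$ and $(b)$ fail, and $p=z_2-1$ gives $\|p\|_{2,\nu}=0$, $\|p\|_{2,\mu}>0$, killing $(f)$. What actually holds is $(a)\Leftrightarrow(b)\Leftrightarrow(f)\Rightarrow(c)\Leftrightarrow(d)\Leftrightarrow(e)$; fortunately the paper's subsequent uses of the lemma (Example~\ref{ex_transfer}, Corollary~\ref{cor_Gram}) only run in this direction.
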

\begin{proof}
   We know from Lemma~\ref{lem_decomposition} that we may write the set of multivariate polynomials of degree at most $k=k_m^\mu=k_n^\nu$ as a direct sum $\mathcal L_m(\mu)\oplus \mathcal N_m(\mu)=\mathcal L_n(\nu)\oplus \mathcal N_n(\nu)$. Thus the equivalence between (a), (b), (c) and (d) follows immediately from Definition~\ref{def_sets}.
   The implication (d) $\Longrightarrow$ (e) follows by $\nu$-orthogonality, and the implication (f) $\Longrightarrow$ (a) is trivial. Hence it only remains to show that (e) implies (f).

   Suppose that (e) holds, implying $\mathcal L_m(\mu)\subset \mathcal L_n(\nu)$. Let $p\in \mathbb C[z]$ of degree at most $k$. Applying  Lemma~\ref{lem_decomposition} we may write
   $$
        p = q + p_3 , \quad
        q = v_n^\nu \xi \in \mathcal L_n(\nu), \quad
        p_3 \in \mathcal N_n(\nu) \subset \mathcal N_m(\mu)
   $$
   for some $\xi\in \mathbb C^{n+1}$.
   Then, applying again Lemma~\ref{lem_decomposition},
   $$
        q = p_1+p_2,  \quad
        p_1 \in \mathcal L_m(\mu), \quad
        p_2 \in \mathcal N_m(\mu) \cap \mathcal L_n(\nu),
   $$
   where
   \begin{eqnarray*}
       p_1(z) &=& \int K_n^\mu(z,w) q(w) d\mu(w)
       \\&=& \int v_n^\mu(z) v_n^\mu(w)^\ast v_n^\nu(w)\xi d\mu(w) = v_n^\mu(z) R_m(\mu,\nu) \xi .
   \end{eqnarray*}
   Hence
   \begin{eqnarray*}
        \| p \|_{2,\mu}
        &=& \| p_1 \|_{2,\mu} = \| R_m(\mu,\nu) \xi \|
        \leq \| R_m(\mu,\nu) \| \, \| \xi \|
        \\&=&
        \| R_m(\mu,\nu) \| \, \| q \|_{2,\nu}  = \| R_m(\mu,\nu) \| \, \| p \|_{2,\nu},
   \end{eqnarray*}
   showing that (e) holds with $c_k=\| R_m(\mu,\nu) \|$. Taking the supremum for $\xi \in \mathbb C^{n+1}\setminus \{ 0 \}$ shows that $c_k$ cannot be smaller.
\end{proof}

Under the assumptions of Lemma~\ref{lem_existence_transfer}(e), we can make a link between $R_n(\nu,\mu)$ and the matrices defined in Lemma~\ref{lem_degree_OP}, namely
\begin{equation} \label{eq_mixed_moments}
      R_n(\nu,\mu) = R_n(\nu)^{-1} E_k(\nu)^T E_k(\mu) R_m(\mu).
\end{equation}
To see this, recall from \eqref{eq.degree_OP3} that
$$
       v_m^\mu = v_k E_k(\mu) R_m(\mu),
       \quad
       v_n^\nu = v_k E_k(\nu) R_n(\nu),
$$
where Lemma~\ref{lem_existence_transfer}(c) tells us that $E_k(\mu)=E_k(\nu)E_k(\nu)^T E_k(\mu)$, and thus
$$
      v_m^\mu = v_n^\nu R_n(\nu)^{-1} E_k(\nu)^T E_k(\mu) R_m(\mu)
      = v_n^\nu R_n(\nu,\mu) ,
$$
implying \eqref{eq_mixed_moments}.
In this case, it is not difficult to check that $R_n(\nu,\mu)$ is of full column rank $m+1$ and of upper echelon form, and that we have full rank decomposition
\begin{equation} \label{eq_cholesky}
       M_m(\nu,\mu)=R_n(\nu,\mu)^*R_n(\nu,\mu).
\end{equation}

In the remainder of this section we will always suppose that $k=k_m^\mu=k_n^\nu$ with $\mathcal N_n(\nu)=\mathcal N_m(\mu)$. Thus we may apply Lemma~\ref{lem_existence_transfer} twice, implying that $m=n$ and $k_j^\mu=k_j^\nu$ for $j=0,1,...,n$ by Lemma~\ref{lem_existence_transfer}(c). In particular, we find that $E_k(\nu)^T E_k(\mu)$ is the identity of order $n+1$. We conclude using \eqref{eq_mixed_moments} that $R_n(\nu,\mu)$ is upper triangular and invertible, with inverse given by
\begin{equation} \label{eq_transfer_invertible}
      R_n(\nu,\mu)^{-1} = R_n(\mu)^{-1} R_n(\nu) = R_n(\mu,\nu) .
\end{equation}
We summarize these findings and their consequence for the Christoffel-Darboux kernel in the following statement.

\begin{cor}\label{cor_Gram}
   Let $n\geq 0$ be some integer\footnote{The interested reader might want to check that this condition is true for all $n$ if $\mathcal N(\mu)=\mathcal N(\nu)$.} with $k_n^\mu=k_n^\nu$ and $\mathcal N_n(\mu)=\mathcal N_n(\nu)$. Then
   \begin{equation} \label{eq.transition}
           v_n^\mu(z)=v_n^\nu(z) R_n(\nu,\mu)
           , \quad
           v_n^\nu(z)=v_n^\mu(z) R_n(\mu,\nu)
           ,
   \end{equation}
   with the upper triangular and invertible matrices $R_n(\nu,\mu)=R_n(\nu)^{-1} R_n(\mu)$ and $R_n(\mu,\nu)=R_n(\nu,\mu)^{-1}$, allowing for the Cholesky decompositions
   \begin{equation} \label{eq.transition.gramm}
        M_n(\nu,\mu) = R_n(\nu,\mu)^\ast R_n(\nu,\mu),
        \quad
        M_n(\mu,\nu) = R_n(\mu,\nu)^\ast R_n(\mu,\nu).
   \end{equation}
   In particular,
   \begin{equation} \label{eq.schur_kernel}
        K_n^\nu(z,w) = v_n^\mu(z) M_n(\nu,\mu)^{-1} v_n^\mu(w)^\ast .
   \end{equation}
\end{cor}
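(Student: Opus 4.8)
The plan is to assemble the corollary directly from the material immediately preceding its statement, so the proof is essentially bookkeeping. First I would invoke Lemma~\ref{lem_existence_transfer} with the roles of $\mu$ and $\nu$ interchanged: setting $k=k_n^\mu=k_n^\nu$ and $m=n$, the hypothesis $\mathcal N_n(\mu)=\mathcal N_n(\nu)$ makes condition~(a) of that lemma hold in both directions, whence by the equivalence (a)$\iff$(c) we get $\{k_0^\mu,\dots,k_n^\mu\}=\{k_0^\nu,\dots,k_n^\nu\}$. Since both sequences are strictly increasing starting from $0$, this forces $k_j^\mu=k_j^\nu$ for every $j\le n$, hence $E_k(\mu)=E_k(\nu)$ and, because the columns of $E_k(\mu)$ are distinct standard basis vectors, $E_k(\nu)^TE_k(\mu)=I_{n+1}$.

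Next I would feed this identity into \eqref{eq_mixed_moments} and \eqref{eq_transfer_invertible}, which were established just above the statement under the (now verified) hypotheses of Lemma~\ref{lem_existence_transfer}(e). Equation \eqref{eq_mixed_moments} collapses to $R_n(\nu,\mu)=R_n(\nu)^{-1}R_n(\mu)$, a product of invertible upper triangular matrices and hence itself invertible and upper triangular; symmetrically $R_n(\mu,\nu)=R_n(\mu)^{-1}R_n(\nu)$, and \eqref{eq_transfer_invertible} gives $R_n(\mu,\nu)=R_n(\nu,\mu)^{-1}$. The transition formulas \eqref{eq.transition} are then exactly the statement of Lemma~\ref{lem_existence_transfer}(e) read in the two directions $\mu\to\nu$ and $\nu\to\mu$. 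For the Cholesky relations \eqref{eq.transition.gramm} I would either quote \eqref{eq_cholesky} directly (applied also with $\mu$ and $\nu$ swapped), or re-derive it: substituting $v_n^\mu(z)=v_n^\nu(z)R_n(\nu,\mu)$ into the definition of $M_n(\nu,\mu)=\int v_n^\mu(z)^\ast v_n^\mu(z)\,d\nu(z)$ and using $\nu$-orthonormality $\int v_n^\nu(z)^\ast v_n^\nu(z)\,d\nu(z)=I$ yields $M_n(\nu,\mu)=R_n(\nu,\mu)^\ast R_n(\nu,\mu)$, and symmetrically for $M_n(\mu,\nu)$.

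Finally, for the kernel identity \eqref{eq.schur_kernel} I would start from $K_n^\nu(z,w)=v_n^\nu(z)v_n^\nu(w)^\ast$, insert $v_n^\nu=v_n^\mu R_n(\mu,\nu)$ from \eqref{eq.transition}, and observe that
\[
 R_n(\mu,\nu)R_n(\mu,\nu)^\ast
 = R_n(\nu,\mu)^{-1}\bigl(R_n(\nu,\mu)^\ast\bigr)^{-1}
 = \bigl(R_n(\nu,\mu)^\ast R_n(\nu,\mu)\bigr)^{-1}
 = M_n(\nu,\mu)^{-1}
\]
by \eqref{eq.transition.gramm}, which produces $K_n^\nu(z,w)=v_n^\mu(z)M_n(\nu,\mu)^{-1}v_n^\mu(w)^\ast$.

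There is no serious obstacle: the corollary is a consolidation of Lemma~\ref{lem_existence_transfer}, \eqref{eq_mixed_moments}, \eqref{eq_transfer_invertible} and \eqref{eq_cholesky}. The only point that deserves a careful sentence is the very first step, namely that the single-index hypotheses $k_n^\mu=k_n^\nu$ and $\mathcal N_n(\mu)=\mathcal N_n(\nu)$ already force $k_j^\mu=k_j^\nu$ for all $j\le n$ and hence $E_k(\mu)=E_k(\nu)$; everything else is a chain of substitutions. If desired, one can also record the footnote's remark that $\mathcal N(\mu)=\mathcal N(\nu)$ makes these hypotheses hold simultaneously for all $n$, since then $\mathcal N_n(\mu)$ and $\mathcal N_n(\nu)$ are both the degree-$\le k$ part of the same ideal and the ranks of the moment matrices $\widetilde M_k(\mu)$ and $\widetilde M_k(\nu)$ grow in lockstep.
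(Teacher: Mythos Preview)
Your proposal is correct and follows essentially the same route as the paper: the paragraph immediately preceding the corollary applies Lemma~\ref{lem_existence_transfer} twice to deduce $k_j^\mu=k_j^\nu$ for all $j\le n$ and $E_k(\nu)^TE_k(\mu)=I$, then invokes \eqref{eq_mixed_moments}, \eqref{eq_transfer_invertible} and \eqref{eq_cholesky}, with the kernel identity obtained by substitution exactly as you describe. The corollary is explicitly presented as a summary of those findings, so your bookkeeping matches the paper's argument step for step.
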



\begin{examples}\label{ex_transfer}
   (a) Let $\mu=\mu_0$ be the normalized Lebesgue measure on $(\partial \mathbb D)^d$ making monomials orthonormal. Here $\mathcal N(\mu_0)=\{ 0 \}$, and thus $k_n^{\mu_0}=n$ for all $n$. As a consequence, Lemma~\ref{lem_existence_transfer}(d),(e) reduces to Lemma~\ref{lem_degree_OP}.

   (b) Consider the case $\nu=\mu+\sigma$ for some positive measure $\sigma$ with compact support; for instance, the case of adding a finite number of point masses.
   Since $\nu-\mu\geq 0$, clearly $\mathcal N(\nu)\subset \mathcal N(\mu)$, and from Lemma~\ref{lem_existence_transfer} we know that we may express the orthogonal polynomials $p_j^\mu$ in terms of the $p_j^\nu$. Moreover, $\| M_n(\mu,\nu) \| =\| R_m(\mu,\nu) \|^2 \leq 1$ by Lemma~\ref{lem_existence_transfer}(f).
   If,
     in addition, 
   $\supp(\sigma)\subset \mathcal S(\mu)$, then $\mathcal N(\mu)=\mathcal N(\mu+\sigma)$ by Example~\ref{exa_supp}. Thus, in this case, we may also express the orthogonal polynomials $p_j^\nu$ in terms of the $p_j^\mu$ via  \eqref{eq.transition}, $k_j^\mu=k_j^\nu$ for all $j\geq 0$, and, by \eqref{eq.transition.gramm} and \eqref{eq.schur_kernel},
   $$
       \forall z\in \mathbb C^d, \, \forall n \geq 0 : \quad
           K_n^{\mu+\sigma}(z,z) \leq K_n^\mu(z,z).
   $$
\end{examples}

\begin{rems}\label{rem_outliers1}
   Lasserre and Pauwels in {\cite[Theorem~3.9 and Theorem~3.12]{LP2}} consider the graded lexicographical ordering \eqref{total_degree}
   and
   give an explicit sequence of numbers $\kappa_m$ such that the Hausdorff
    distance $d_H(S,S_m)$ between the sets $$
      S=\supp(\mu), \mbox{~~~~~and~~~~~} S_m=\{ x \in \mathbb R^n : K_{n_{tot}(m)}^\mu(z,z) \leq \kappa_m \}
   $$ tends to $0$. For this they assume that $S=\mbox{Clos}(\mbox{Int}(S)))$, and that there is some constant $w_{\min}>0$ such that $d\mu \geq w_{\min} dA|_S$, where $dA$ denotes (unnormalized) Lebesgue measure in $\mathbb R^d$.

  We summarize their reasoning, by using our notation. All measures $\nu$ considered in this context are supported in $\mathbb R^d$, with $\supp(\nu)$ having non-empty interior (in $\mathbb R^d$), such that $\mathcal N(\nu)=\{ 0 \}$ and $\mathcal S(\nu)=\mathbb C^d$ by Example~\ref{exa_supp}.
   Notice that $d_H(S,S_m)\leq \delta$ if
   \begin{eqnarray} &&\label{eqLP1}
       \forall z \in \supp(\mu) \mbox{~with $\mbox{dist}(z,\partial \supp(\mu))\geq \delta$:} \quad  K_{n_{tot}(m)}^\mu(z,z) \leq \kappa_m,
       \\&&\label{eqLP2}
       \forall z \not\in \supp(\mu) \mbox{~with $\mbox{dist}(z,\partial \supp(\mu)) \geq \delta$:} \quad  K_{n_{tot}(m)}^\mu(z,z) \geq \kappa_m.
   \end{eqnarray}
   For $z$ as in \eqref{eqLP1} we denote by $B$ the real unit ball in $\mathbb R^n$, and observe that $z+\delta B\subset S$. Hence
   $
          w_{\min} \, A|_{z+\delta B} \leq
          w_{\min} \, A|_{S} \leq \mu.
   $
   Using the fact that that an explicit upper bound $\kappa_m'$ is known for $K_{n_{tot}(m)}^{A|_B}(0,0)$ since the 90-ies, we obtain
   by Example~\ref{ex_transfer}(b) and \eqref{affine_invariance}
   $$
        K^\mu_{n_{tot}(m)}(z,z) \leq \frac{K^{A|_{z+\delta B}}_{n_{tot}(m)}(z,z)}{w_{\min}} = \frac{A(B)}{A(z+\delta B)} \frac{K^{A|_{B}}_{n_{tot}(m)}(0,0)}{w_{\min}} = \frac{\kappa'_m}{\delta^d \, w_{\min}},
   $$
   compare with \cite[Lemma~6.1]{LP2}. With $z$ as in \eqref{eqLP2}, they consider the annulus $S'=\{ x\in \mathbb R^d: \delta \leq \| x \| \leq r\}$, where $r=\delta+\mbox{diam}(S)$ such that $S \subset S'$. Following \cite{Kroo-Lubinsky-1,Kroo-Lubinsky-2}, they then consider the real {\it needle polynomial}
   $$
      p(x)= T_{\lfloor m/2\rfloor}(\frac{r^2+\delta^2 - 2 \| x - z \|^2}{r^2-\delta^2}), \quad \mbox{with} \quad
      \| p \|_{L^\infty(S)} \leq \| p \|_{L^\infty(S')} = 1,
   $$
   $\tdeg p \leq m$ and thus $\deg p \leq n_{tot}(m)$, and
   $$
      |p(z)|=T_{\lfloor m/2\rfloor}(\frac{r^2+\delta^2}{r^2-\delta^2})
      \geq \frac{1}{2} e^{\lfloor m/2\rfloor g_{\mathbb C\setminus [-1,1]}(\frac{r^2+\delta^2}{r^2-\delta^2})}
      \geq \frac{1}{2} \Bigl( \frac{r+\delta}{r-\delta}\Bigr)^{(m-1)/2};
   $$
   compare with \cite[Lemma~6.3]{LP2}. It follows from \eqref{def_kernel3_ter} that
   $$
        K_{n_{tot}(m)}^\mu(z,z) \geq \frac{|p(z)|^2}{\| p \|_{2,\mu}^2}
        \geq \frac{1}{2 \mu(\mathbb C)} \Bigl( 1 + \frac{2\delta}{\mbox{diam}(S)}\Bigr)^{m-1} .
   $$
   In view of \eqref{eqLP1}, \eqref{eqLP2}, it thus only remains to find $\delta$ as a function of $m$ (possibly decreasing) and $\kappa_m$ such that
   $$
          \frac{\kappa'_m}{\delta_m^d \, w_{\min}} \leq \kappa_m \leq \frac{1}{2 \mu(\mathbb C)} \Bigl( 1 + \frac{2\delta_m}{\mbox{diam}(S)}\Bigr)^{m-1} .
   $$
   For instance, for $\delta_m=1/m^\alpha$ with $\alpha\in (0,1)$, the left-hand term grows like a power of $m$, and the right-hand side exponentially in $m$, so that the above inequalities are true for sufficiently large $m$.
   Enclosed we list some possible improvements.
   \begin{enumerate}
   \item The assumption $S=\mbox{Clos}(\mbox{Int}(S)))$ for $S=\supp(\mu)$ is strongly used, which does not allow outliers.
   \item The assumption $S=\mbox{Clos}(\mbox{Int}(S)))$ is strongly used also to give upper bounds for $K_{n_{tot}(m)}^\mu(z,z)$ in $\supp(\mu)$ at some distance from the boundary. This condition could be replaced by stepping to
       the relative interior of $\supp(\mu)$ in $\mathcal S(\mu)\cap \mathbb R^d$ (in the real setting) or $\mathcal S(\mu)$ in the complex setting. This may possibly result in an increase in the power of $m$.
   \item The lower bound for $K_{n_{tot}(m)}^\mu(z,z)$ is very rough, and it should be possible to write something down in terms of the plurisubharmonic Green function of $\supp(\mu)$. One difficulty arises in this direction: one needs bounds uniformly in $z$ for all $z\in \supp(\mu)$ with distance to $\supp(\mu)$ decaying like a fractional negative power of $m$.
   \end{enumerate}
\end{rems}

\begin{rems}
   Modified moment matrices and mixed moment matrices have been shown to be useful for explicitly computing orthogonal polynomials of one or several complex variables; see for instance the modified Chebyshev algorithm for orthogonality on the real line \cite{Gautschi}.
   Here the basic idea is that one knows $\mu$ and its orthogonal polynomials $p_j^\mu$, and computes both $R_n(\mu,\nu)$ and $v_n^\nu(z)=v_n^\mu(z) R_n(\mu,\nu)$ through $\nu$-orthogonality, using, e.g., the Gram-Schmidt method. In case of finite precision arithmetic, we can only insure small errors if $R_n(\mu,\nu)$ has a modest condition number given by $\| R_n(\mu,\nu) \| \, \| R_n(\mu,\nu)^{-1} \|$. In case of one complex variable it has been shown in \cite[Lemma~3.4]{Beckermann_Habil} that this condition number grows exponentially in $n$ unless the complements of $\supp(\mu)$ and $\supp(\nu)$ have the same unbounded connected component, and, according to Lemma~\ref{lem_growth}, a similar result is expected to be true in the case of several complex variables. Thus the choice of $\mu$ is essential.

   In particular, it is not always a good idea to compute $p_0^\nu,p_1^\nu,...$ from monomials, even after scaling \eqref{affine_invariance}; compare with Example~\ref{ex_transfer}(a). But the modified moment matrix $M_n(\mu,\nu)$ and hence $R_n(\mu,\nu)$ might be much better conditioned for $\mu$ close to $\nu$, or, in other words, the $p_j^\mu$ are ``nearly" $\nu$-orthogonal for $j=0,1,...,n$.
\end{rems}


\subsection{Small perturbations}\label{section_small_perturbations}

In this subsection we compare for fixed $n$ the Christoffel-Darboux kernels
for two measures $\mu$ and $\nu$ that are close in a precise sense.
In view of Lemma~\ref{lem_existence_transfer}(f), the proximity of the two measures is imposed by the following condition: there exists a sufficiently small $\epsilon\in (0,1)$ such that
\begin{equation} \label{eq.close_measures}
      (1-\epsilon) \, \| p \|^2_{2,\mu}\leq \| p \|^2_{2,\nu} \leq (1+\epsilon) \, \| p \|^2_{2,\mu}
\end{equation}
for all polynomials $p$ of degree at most $k_n^\mu$. As a consequence of \eqref{eq.close_measures}, the quantity $\| p \|^2_{2,\mu}$ vanishes if and only if this is true for  $\| p \|^2_{2,\nu}$; that is,
$k_n^\mu=k_n^\nu$ and $\mathcal N_{n}(\mu)=\mathcal N_{n}(\nu)$.
This allows us to apply Corollary~\ref{cor_Gram}, in particular $v_n^\mu=(p_0^\mu,...,p_n^\mu)=v_n^\nu R_n(\nu,\mu) = (p_0^\nu,...,p_n^\nu) R_n(\nu,\mu)$ with the Hermitian and positive definite modified moment matrix $M_n(\nu,\mu)=R_n(\nu,\mu)^\ast R_n(\nu,\mu)$ being similar to $M_n(\mu,\nu)^{-1}$.

This modified moment matrix allows us to restate assumption \eqref{eq.close_measures}: for any $\xi \in \mathbb C^{n+1}$, by considering $p(z)=v_n^\mu(z) \xi=v_n^\nu(z) R_n(\nu,\mu) \xi$ in \eqref{eq.close_measures}, we find
\begin{equation} \label{eq.Rayleigh}
      (1-\epsilon) \|\xi\|^2 \leq
      \| R_n(\nu,\mu)\xi \|^2 = \xi^\ast M_n(\nu,\mu)\xi \leq (1+\epsilon) \|\xi \|^2;
\end{equation}
in other words, $\nu$ is so close to $\mu$ that all eigenvalues of the Hermitian matrix $M_n(\nu,\mu)-I$ lie in the interval $[-\epsilon,\epsilon]$, or $\| M_n(\nu,\mu)-I \| \leq \epsilon$. Using the Froebenius matrix norm, we therefore get the sufficient condition
\begin{equation} \label{eq.Rayleigh_bis}
     \sum_{j,k=0}^n \Bigl| \langle p_j^\mu,p_k^\mu\rangle_{2,\nu} - \delta_{j,k} \Bigr|^2 = \| M_n(\nu,\mu)-I \|_F^2 \leq \epsilon^2
\end{equation}
which indicates how $p_0^\mu,...,p_n^\mu$ are ``nearly" $\nu$-orthogonal.\

For two Hermitian matrices $X_1,X_2$ we write that $X_1 \leq X_2$ if $X_2-X_1$ is positive semi-definite. We make use of this notation in the proofs of the following results.

\begin{prop}\label{prop.2measures}
   Under the assumption \eqref{eq.close_measures} there holds for all $z,w\in \mathbb C^d$
   \begin{eqnarray} && \label{prop.2measures1}
       (1-\epsilon) \, K^\nu_n(z,z)
       \leq K^\mu_n(z,z) \leq
       (1+\epsilon) K^\nu_n(z,z) ,
       \\&& \label{prop.2measures2}
       |K^\mu_n(z,w) - K^\nu_n(z,w)| \leq \epsilon \sqrt{K^\nu_n(z,z)} \sqrt{K^\nu_n(w,w)},
       \\&& \label{prop.2measures3}
       |C_n^\mu(z,w) - C_n^\nu(z,w) |
         \leq 2 \epsilon.
   \end{eqnarray}
\end{prop}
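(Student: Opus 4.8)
The plan is to express everything in terms of the modified moment matrix $M_n(\nu,\mu)$ and then read off the three inequalities from the spectral bound $\| M_n(\nu,\mu)-I\|\le\epsilon$ established in \eqref{eq.Rayleigh}. Recall from Corollary~\ref{cor_Gram} that under \eqref{eq.close_measures} we have $k_n^\mu=k_n^\nu$, $\mathcal N_n(\mu)=\mathcal N_n(\nu)$, and hence $v_n^\mu=v_n^\nu R_n(\nu,\mu)$ together with the two representations
$$
   K_n^\mu(z,w)=v_n^\mu(z)v_n^\mu(w)^\ast,\qquad
   K_n^\nu(z,w)=v_n^\mu(z)M_n(\nu,\mu)^{-1}v_n^\mu(w)^\ast,
$$
the latter being \eqref{eq.schur_kernel}. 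Thus, writing $u=v_n^\mu(z)^\ast$ and $u'=v_n^\mu(w)^\ast$, the two kernels are the quadratic/sesquilinear forms of $I$ and $M_n(\nu,\mu)^{-1}$ against these vectors.

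First I would prove \eqref{prop.2measures1}. From \eqref{eq.Rayleigh}, $(1-\epsilon)I\le M_n(\nu,\mu)\le(1+\epsilon)I$, so inverting (all matrices here are Hermitian positive definite) gives $\frac1{1+\epsilon}I\le M_n(\nu,\mu)^{-1}\le\frac1{1-\epsilon}I$. Evaluating the outer two forms at $u=v_n^\mu(z)^\ast$ yields $\frac1{1+\epsilon}K_n^\mu(z,z)\le K_n^\nu(z,z)\le\frac1{1-\epsilon}K_n^\mu(z,z)$, which rearranges to \eqref{prop.2measures1} (using $\frac1{1+\epsilon}\ge 1-\epsilon$ and $\frac1{1-\epsilon}\ge 1+\epsilon$ for $\epsilon\in(0,1)$). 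Next, for \eqref{prop.2measures2} write
$$
   K_n^\mu(z,w)-K_n^\nu(z,w)=v_n^\mu(z)\bigl(I-M_n(\nu,\mu)^{-1}\bigr)v_n^\mu(w)^\ast,
$$
and apply the Cauchy–Schwarz inequality in the inner product induced by the positive definite matrix $M_n(\nu,\mu)^{-1}$: the modulus is bounded by $\bigl(u^\ast A M_n(\nu,\mu) A u\bigr)^{1/2}\bigl(u'^\ast A M_n(\nu,\mu) A u'\bigr)^{1/2}$ where $A=I-M_n(\nu,\mu)^{-1}$ — more cleanly, factor $I-M_n(\nu,\mu)^{-1}=M_n(\nu,\mu)^{-1/2}\bigl(M_n(\nu,\mu)-I\bigr)M_n(\nu,\mu)^{-1/2}$ and use $\|M_n(\nu,\mu)-I\|\le\epsilon$ to get, via Cauchy–Schwarz on the vectors $M_n(\nu,\mu)^{-1/2}u$ and $M_n(\nu,\mu)^{-1/2}u'$,
$$
   |K_n^\mu(z,w)-K_n^\nu(z,w)|\le\epsilon\,\bigl(u^\ast M_n(\nu,\mu)^{-1}u\bigr)^{1/2}\bigl(u'^\ast M_n(\nu,\mu)^{-1}u'\bigr)^{1/2}
   =\epsilon\sqrt{K_n^\nu(z,z)}\sqrt{K_n^\nu(w,w)},
$$
which is \eqref{prop.2measures2}.

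Finally, \eqref{prop.2measures3} follows by combining the first two. Write $C_n^\mu(z,w)-C_n^\nu(z,w)$ and estimate in two steps: replace $K_n^\mu(z,w)$ by $K_n^\nu(z,w)$ in the numerator (controlled by \eqref{prop.2measures2}, which after dividing by $\sqrt{K_n^\nu(z,z)K_n^\nu(w,w)}$ contributes at most $\epsilon$), and then account for the change of normalization factors from the $\nu$-denominator to the $\mu$-denominator, using \eqref{prop.2measures1} to show each ratio $\sqrt{K_n^\mu(z,z)/K_n^\nu(z,z)}$ lies in $[\sqrt{1-\epsilon},\sqrt{1+\epsilon}]$ and $|C_n^\nu(z,w)|\le 1$; a short estimate of the form $\bigl|1-\tfrac1{ab}\bigr|$ with $a,b\in[\sqrt{1-\epsilon},\sqrt{1+\epsilon}]$ shows this second contribution is also at most $\epsilon$ (indeed $\frac{1}{1-\epsilon}-1\le$ something slightly larger, so one may need the mild constant or a crude bound to land exactly on $2\epsilon$). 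I expect the only real bookkeeping obstacle to be this last step — getting the constant in \eqref{prop.2measures3} down to exactly $2\epsilon$ rather than $2\epsilon+O(\epsilon^2)$ — which is handled by using the sharp bounds $1-\epsilon\le 1/(1+\epsilon)$ and $1+\epsilon\le 1/(1-\epsilon)$ together with $|C_n^\nu|\le 1$ and the triangle inequality, without any Taylor expansion. Everything else is a direct translation of the spectral estimate \eqref{eq.Rayleigh} through the kernel representation \eqref{eq.schur_kernel}.
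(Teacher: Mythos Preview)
Your proposal is correct and follows essentially the same route as the paper. The only cosmetic difference is that the paper works in the $\nu$-basis throughout: it writes $K_n^\mu(z,w)=v_n^\nu(z)\,M_n(\mu,\nu)^{-1}\,v_n^\nu(w)^\ast$ and observes that $M_n(\mu,\nu)^{-1}$ is \emph{similar} to $M_n(\nu,\mu)$, hence already has spectrum in $[1-\epsilon,1+\epsilon]$. This yields \eqref{prop.2measures1} immediately and gives \eqref{prop.2measures2} via the plain Cauchy--Schwarz bound $|v_n^\nu(z)(M_n(\mu,\nu)^{-1}-I)v_n^\nu(w)^\ast|\le\epsilon\,\|v_n^\nu(z)\|\,\|v_n^\nu(w)\|$, without your factorization $I-M_n(\nu,\mu)^{-1}=M_n(\nu,\mu)^{-1/2}(M_n(\nu,\mu)-I)M_n(\nu,\mu)^{-1/2}$ (which, however, is a perfectly valid alternative).

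Your hesitation about the constant in \eqref{prop.2measures3} is unwarranted, and the slip is in which cosine you invoke. The clean decomposition is
\[
   C_n^\mu(z,w)-C_n^\nu(z,w)
   =\frac{K_n^\mu(z,w)-K_n^\nu(z,w)}{\sqrt{K_n^\nu(z,z)K_n^\nu(w,w)}}
   +C_n^\mu(z,w)\Bigl(1-\sqrt{\tfrac{K_n^\mu(z,z)K_n^\mu(w,w)}{K_n^\nu(z,z)K_n^\nu(w,w)}}\Bigr).
\]
The first term is $\le\epsilon$ by \eqref{prop.2measures2}. By \eqref{prop.2measures1} the ratio under the square root lies in $[(1-\epsilon)^2,(1+\epsilon)^2]$, so its square root lies in $[1-\epsilon,1+\epsilon]$ and the bracket has modulus $\le\epsilon$; together with $|C_n^\mu|\le1$ this gives exactly $2\epsilon$, no $O(\epsilon^2)$ correction needed. (Using $|C_n^\nu|\le1$ instead forces the other decomposition, in which the first term carries a factor $B/A$ and one only gets $2\epsilon/(1-\epsilon)$.)
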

\begin{proof}
   Recall from \eqref{eq.schur_kernel} that
   \begin{eqnarray} \nonumber
     K_n^\mu(z,w) =
     v_n^\nu(z) M_n(\mu,\nu)^{-1} v_n^\nu(w)^\ast .
   \end{eqnarray}
   Since $M_n(\mu,\nu)^{-1}$ is similar to $M_n(\nu,\mu)$, we get from \eqref{eq.Rayleigh} that
   \begin{equation} \label{eq.2kernel2}
        (1-\epsilon) I \leq M_n(\mu,\nu)^{-1} \leq (1+\epsilon) I ,
   \end{equation}
   and a combination with \eqref{eq.schur_kernel} for $w=z$ gives \eqref{prop.2measures1}.
   Moreover, it follows from \eqref{eq.2kernel2} that $\| M_n(\mu,\nu)^{-1} - I \| \leq \epsilon$, and thus, again by \eqref{eq.schur_kernel},
   $$
        |K^\mu_n(z,w) - K^\nu_n(z,w)| \leq \epsilon \| v_n^\nu(z) \| \, \| v_n^\nu(w) \|,
   $$
   implying \eqref{prop.2measures2}.
   Finally, from \eqref{prop.2measures1} we infer
   $$
        \frac{K_n^\mu(z,z)K_n^\mu(w,w)}{K_n^\nu(z,z)K_n^\nu(w,w)} \in \Bigl[(1-\epsilon)^2,(1+\epsilon)^2\Bigr]
   $$
   and thus, by definition \eqref{cos} and \eqref{prop.2measures2},
   \begin{eqnarray*} &&
       |C_n^\mu(z,w) - C_n^\nu(z,w) |
       \\&&\leq \left| \frac{K_n^\mu(z,w)-K_n^\nu(z,w)}{\sqrt{K_n^\nu(z,z)K_n^\nu(w,w)}}\right|
       + |C_n^\mu(z,w) | \, \left|
       1 - \sqrt{\frac{K_n^\mu(z,z)K_n^\mu(w,w)}{K_n^\nu(z,z)K_n^\nu(w,w)}}
       \right|
       \\&& \leq \epsilon + |C_n^\mu(z,w) | \,\epsilon \leq 2 \epsilon,
   \end{eqnarray*}
   as claimed in \eqref{prop.2measures3}.
\end{proof}

\begin{rem}\label{eq.Rayleigh_mass}
    If assumption \eqref{eq.close_measures} holds for a pair of measures $(\mu,\nu)$, then it also holds for $(\mu+\sigma,\nu+\sigma)$ for any measure $\sigma\geq 0$ supported on a subset of  $\mathcal{S}(\mu)$, and hence
    $$
       (1-\epsilon) \, K^{\nu+\sigma}_n(z,z)
       \leq K^{\mu+\sigma}_n(z,z) \leq
       (1+\epsilon) K^{\nu+\sigma}_n(z,z)
    $$
    by Proposition~\ref{prop.2measures}. Indeed, from Example~\ref{ex_transfer}(b) we know that $k_n^\mu=k_n^{\mu+\sigma}$, and so, for any polynomial $p,$
    of degree at most $k_n^{\mu+\sigma}$
    $$
         | \| p \|^2_{2,\mu+\sigma} - \| p \|^2_{2,\nu+\sigma} | =
         | \| p \|^2_{2,\mu} - \| p \|^2_{2,\nu} |  \leq \epsilon \, \| p \|^2_{2,\mu}\leq \epsilon \, \| p \|^2_{2,\mu+\sigma},
    $$
    which implies assumption \eqref{eq.close_measures} for $(\mu+\sigma,\nu+\sigma)$.
\end{rem}


\begin{rem} \label{remark3.9}
     Multi-point analogues of Proposition~\ref{prop.2measures} are also available. We include them for completeness.
    Suppose that assumption \eqref{eq.close_measures} holds, and let $z_1,...,z_\ell\in \mathbb C^d$.
    Then
    \begin{eqnarray*}
              (1-\epsilon) \, K^\nu_n(z_1,...,z_\ell;z_1,...,z_\ell)
       &\leq&
       K^\mu_n(z_1,...,z_\ell;z_1,...,z_\ell)
       \\&\leq&
       (1+\epsilon) K^\nu_n(z_1,...,z_\ell;z_1,...,z_\ell) ,
    \end{eqnarray*}
    and, for the Froebenius matrix norm $\| \cdot \|_F,$
    \begin{eqnarray*}
              \| C^\mu_n(z_1,...,z_\ell;z_1,...,z_\ell) - C^\nu_n(z_1,...,z_\ell;z_1,...,z_\ell) \|_F
              \leq 2 \sqrt{\ell(\ell-1)} \epsilon.
    \end{eqnarray*}
    For a proof, consider the matrix
    $$
        V^\nu_n(z_1,...,z_\ell) := \left[\begin{array}{cc}
        v^\nu_n(z_1) \\ \vdots \\ v^\nu_n(z_\ell) \end{array}\right] .
    $$
    From the factorization
    $$
       K^\nu_n(z_1,...,z_\ell;z_1,...,z_\ell)=
       V^\nu_n(z_1,...,z_\ell) V^\nu_n(z_1,...,z_\ell)^\ast,
    $$
    the identity for the polarized kernel follows by multiplying
    \eqref{eq.2kernel2} on the left by $\xi^* V^\nu_n(z_1,...,z_\ell)$, and on the right by its adjoint. For the cosine identity, we apply \eqref{prop.2measures3} and obtain
    \begin{eqnarray*} &&
       \| C^\mu_n(z_1,...,z_\ell;z_1,...,z_\ell) - C^\nu_n(z_1,...,z_\ell;z_1,...,z_\ell) \|^2_F
       \\&& = \sum_{j,k=1,j\neq k}^\ell | C^\mu_n(z_j,z_k) - C^\nu_n(z_j,z_k) |^2
         \leq 4 \ell(\ell-1) \epsilon^2 ,
    \end{eqnarray*}
    as required for the above claim.
\end{rem}

\begin{rems}
   In a series of papers, Migliorati and his co-authors considered a fixed general probability measure $\mu$ compactly supported in $\mathbb R^d$ (probably their results still remain valid in $\mathbb C^d$) and the discrete measure
   $$
         \nu = \frac{1}{N} \sum_{j=1}^N w(z^{(j)}) \delta_{z^{(j)}} ,
   $$
   with the $N$ masspoints $z^{(j)}\in \supp(\mu)$ being independent and identically distributed random variables with law given by some sampling probability measure $\widetilde \mu$ with $\supp(\widetilde \mu)=\supp(\mu)$ (e.g., $\mu=\widetilde \mu$), and the density
   $w(z)=\frac{d\mu}{d\widetilde \mu}(z)$ is assumed to be strictly positive in $\supp(\mu)$.    We now refer to \cite{cohen} where, as in the present paper, the authors allow for general degree sequences and general measures $\mu$.  In particular, in  \cite[Theorem~2.1(i)]{cohen}, the authors choose $N$ large enough such that
   $$
       \max_{z\in \supp(\mu)} w(z) K_n^\mu(z,z) \leq \frac{1-\log2}{2+2r} \frac{N}{\log(N)}
   $$
   for some $r>0$, and show that the probability that $\| M_n(\nu,\mu)-I\| >1/2$ is less than $2 N^{-r}$. In other words, condition \eqref{eq.Rayleigh} holds for $\epsilon = 1/2$ with a probability $1-2N^{-r}$ close to $1$. By checking the proof, it can be seen that a similar result is true for any $\epsilon\in (0,1/2)$.
\end{rems}

\section{Additive perturbation of positive measures}\label{section_additive}

The aim of this section is to give upper and lower bounds for the ratio of Christoffel-Darboux kernels $K_n^{\mu+\sigma}(z,z)/K_{n}^\mu(z,z)$ for $z\not\in \supp(\mu)$, but possibly $z\in \supp(\sigma)$. Several  of the examples studied in later sections require that the cosine function $C_n^\mu(z,w)$ (see \eqref{cos})
 has a limit, at least along subsequences, for $z,w \not\in \supp(\mu)$ . Hence our bounds will be formulated in terms of this cosine.


Let $\supp(\mu)$ be an infinite set such that there exists an infinite number of orthogonal polynomials $p_j^\mu$, and consider the case of adding $\ell$ disjoint point masses in the Zariski closure
$\mathcal S(\mu)$ of the support of the original measure $\mu$ (and later outside $\supp(\mu)$). That is,
\begin{equation}\label{sig}
   \sigma=\sum_{j=1}^\ell t_j \delta_{z_j} , \quad t_j>0 , \quad
   z_1,...,z_\ell \in \mathcal S(\mu) \mbox{~~disjoint.}
\end{equation}
As in Remark \ref{remark3.9}, we depart from the canonical notation and allow $z_1,\ldots,z_\ell$ to be elements in $\C^d$ rather than the numerical coordinates of a single point $z$.
From Example~\ref{ex_transfer}(b) we know that $\mathcal N(\mu)=\mathcal N(\mu+\sigma)$, and thus we may apply Corollary~\ref{cor_Gram} and in particular property \eqref{eq.schur_kernel} for $\nu=\mu+\sigma$.

\begin{lem}\label{lem_K_invertible}
   If  $z_1,...,z_\ell \in \mathcal S(\mu)$ are distinct, then
   there exists an $N$ such that
   the matrix $K_n^\mu(z_1,...,z_\ell;z_1,...,z_\ell)$ is invertible for all $n \geq N$.
\end{lem}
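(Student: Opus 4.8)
The plan is to reduce invertibility of $K_n^\mu(z_1,\dots,z_\ell;z_1,\dots,z_\ell)$ to linear independence of the rows $v_n^\mu(z_1),\dots,v_n^\mu(z_\ell)$, and then, for all large $n$, to produce polynomials in $\mathcal L_n(\mu)$ that interpolate the values $\delta_{i,j}$ at the points $z_1,\dots,z_\ell$. Invertibility then follows from the Gram-type factorization of the matrix.

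First I would set $V:=V^\mu_n(z_1,\dots,z_\ell)$ as in Remark~\ref{remark3.9}, i.e.\ the $\ell\times(n+1)$ matrix with rows $v_n^\mu(z_1),\dots,v_n^\mu(z_\ell)$, so that $K_n^\mu(z_1,\dots,z_\ell;z_1,\dots,z_\ell)=VV^\ast$. This matrix is invertible exactly when $V$ has full row rank $\ell$, which is what I would aim to establish for all sufficiently large $n$.

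Next, since $z_1,\dots,z_\ell$ are distinct points of $\mathbb C^d$, a Lagrange-type construction gives polynomials $q_1,\dots,q_\ell\in\mathbb C[z]$ with $q_i(z_j)=\delta_{i,j}$: for each $i\ne j$ choose a coordinate $r$ with $(z_i)_r\ne(z_j)_r$ and take $q_i$ to be an appropriate product of the corresponding affine-linear functions. Let $D:=\max_i\deg q_i$, the degree being in the sense fixed by our monomial ordering. Because $\supp(\mu)$ is infinite there are infinitely many $p_j^\mu$, hence $k_n^\mu\to\infty$, so I would fix $N$ with $k_N^\mu\ge D$. For $n\ge N$ each $q_i$ has degree $\le k_n^\mu$, so Lemma~\ref{lem_decomposition} (applied with $k=k_n^\mu$) produces $\widetilde q_i\in\mathcal L_n(\mu)$ with $q_i-\widetilde q_i\in\mathcal N_n(\mu)\subset\mathcal N(\mu)$. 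Since each $z_j$ lies in $\mathcal S(\mu)$ and every element of $\mathcal N(\mu)$ vanishes on $\mathcal S(\mu)$, this forces $\widetilde q_i(z_j)=q_i(z_j)=\delta_{i,j}$.

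Finally I would write $\widetilde q_i=v_n^\mu\,\xi_i$ with $\xi_i\in\mathbb C^{n+1}$ (possible since $\widetilde q_i\in\mathcal L_n(\mu)$), and collect these into $\Xi:=[\,\xi_1\mid\cdots\mid\xi_\ell\,]\in\mathbb C^{(n+1)\times\ell}$. Then $(V\Xi)_{j,i}=v_n^\mu(z_j)\xi_i=\widetilde q_i(z_j)=\delta_{i,j}$, i.e.\ $V\Xi=I_\ell$; hence $V$ has full row rank $\ell$ and $K_n^\mu(z_1,\dots,z_\ell;z_1,\dots,z_\ell)=VV^\ast$ is invertible for all $n\ge N$. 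The one genuinely delicate point is this ``visibility'' step: projecting $q_i$ onto $\mathcal L_n(\mu)$ must not change its values at the $z_j$, and that is precisely where the hypothesis $z_j\in\mathcal S(\mu)$ (rather than merely $z_j\in\mathbb C^d$) enters, through the fact that $\mathcal N(\mu)$ cuts out $\mathcal S(\mu)$; everything else is routine linear algebra together with $k_n^\mu\to\infty$.
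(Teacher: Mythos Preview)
Your proof is correct and follows essentially the same route as the paper: construct Lagrange interpolants at the $z_j$, project them onto $\mathcal L_n(\mu)$ via Lemma~\ref{lem_decomposition}, and use $z_j\in\mathcal S(\mu)$ to ensure the projection preserves the interpolation values, whence $V$ has full row rank and $K_n^\mu=VV^\ast$ is invertible. The only difference is cosmetic: the paper takes Lagrange polynomials of \emph{minimal} degree so as to also pin down the smallest admissible $N$ (and note that the matrix fails to be invertible for $n<N$), whereas you are content with any $N$ satisfying $k_N^\mu\ge D$, which is all the lemma requires.
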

\begin{proof}
   There exist multivariate Lagrange polynomials $p_1,...,p_\ell$, each of them of minimal degree, such that
   $p_j(z_k)=\delta_{j,k}$ for $k=1,...,\ell$. The relation $z_1,...,z_\ell \in \mathcal S(\mu)$ implies that for each $p\in \mathcal N(\mu)$ there holds $p(z_j)=0$ for $j=1,2,...,\ell$. For each $p_j$ there exists $q_j\in \mbox{span} \{ p_0^\mu,p_1^\mu,...\}$ with $\deg q_j\leq \deg p_j$ and $p_j-q_j\in \mathcal N(\mu)$. Then $q_j(z_k)= p_j(z_k)=\delta_{j,k}$, and thus also $q_1,...,q_\ell$ are Lagrange polynomials. In particular, $\deg q_j=\deg p_j$ by minimality of $\deg p_j$. We conclude that there exists an integer $N$ with $k_N^\mu=\max \{ \deg q_j^\mu : j=1,...,\ell \}$, and
   the rows $\{ v_n^\mu(z_j):j=1,2,...,\ell \}$ are linearly independent for $n\geq N$. This is false for $n<N$ by minimality of the degrees. Thus
   $K_n^\mu(z_1,...,z_\ell;z_1,...,z_\ell)$ is invertible for all $n \geq N$, but not for $n<N$.
\end{proof}

Notice that Lemma~\ref{lem_K_invertible} is trivial for the case $d=1$ of univariate polynomials, where $z^{\alpha(j)}=z^j$ and hence $N=\ell-1$.

Recalling Definition~\ref{def_multi_kernel} we see that the matrices $K_n^\mu(z_1,...,z_\ell;z_1,...,z_\ell)$ and $C_n^\mu(z_1,...,z_\ell;z_1,...,z_\ell)$ are invertible for distinct points $z_j$. This enables us to establish the following comparison
theorem consisting of a sequence of alternating inequalities.


\begin{thm}\label{thm_alternation}
   Let $z_1,...,z_\ell \in \mathcal S(\mu)$ be distinct, $n \geq N$ as in Lemma~\ref{lem_K_invertible},  and consider the following matrices (depending on $n$)
   \begin{eqnarray*} &&
      C := C_n^\mu(z_1,...,z_\ell;z_1,...,z_\ell),
      \\ &&
      \widetilde C := C_n^\mu(z_1,...,z_\ell,z;z_1,...,z_\ell,z)
      = \left[\begin{array}{cc} C & b^* \\ b & 1
         \end{array}\right], \quad b \in \mathbb C^{1 \times \ell},
      \\&&   D: = \diag\left( \frac{1}{\sqrt{t_j K_n^\mu(z_j,z_j)}}\right)_{j=1,...,\ell},\\
      and \,\,the\,\, constants
      \\&& \Sigma_m := 1 - \sum_{j=0}^{m-1} (-1)^{j} b C^{-1} (D^2C^{-1})^j b^*,\,\, m=1,2,\ldots.
   \end{eqnarray*}
   Then, for all $z\in \mathbb C^d$,
   \begin{equation} \label{thm_alternation3}
         \frac{K_n^{\mu+\sigma}(z,z)}{K_n^\mu(z,z)}
         = 1 - b (D^2 + C )^{-1} b^* ,
   \end{equation}
   and
   \begin{equation} \label{thm_alternation1}
      \Sigma_1 \leq \Sigma_3 \leq\Sigma_5 \leq ... \leq
      \frac{K_n^{\mu+\sigma}(z,z)}{K_n^\mu(z,z)}
    \leq ... \leq \Sigma_4 \leq \Sigma_2 \leq \Sigma_0.
   \end{equation}
\end{thm}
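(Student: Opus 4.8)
\emph{Step 1 (reduction via Woodbury).} The plan is to write $K_n^{\mu+\sigma}(z,z)/K_n^\mu(z,z)$ as a scalar attached to a rank-$\ell$ perturbation of the identity, then expand an inverse. Since $z_1,\dots,z_\ell\in\mathcal S(\mu)$, Example~\ref{ex_transfer}(b) gives $\mathcal N(\mu)=\mathcal N(\mu+\sigma)$, so Corollary~\ref{cor_Gram} applies to $(\mu,\mu+\sigma)$ and \eqref{eq.schur_kernel} is available. Collecting the rows $v_n^\mu(z_j)$ into $V\in\mathbb C^{\ell\times(n+1)}$ and setting $T=\diag(t_1,\dots,t_\ell)$, $\mu$-orthonormality gives $\int v_n^\mu(z)^\ast v_n^\mu(z)\,d\mu(z)=I$, hence $M_n(\mu+\sigma,\mu)=I+V^\ast T V$. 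Applying the Sherman--Morrison--Woodbury identity to $(I+V^\ast T V)^{-1}$, with $VV^\ast=K_n^\mu(z_1,\dots,z_\ell;z_1,\dots,z_\ell)$ (invertible for $n\ge N$, Lemma~\ref{lem_K_invertible}) and $Vv_n^\mu(z)^\ast=(K_n^\mu(z_j,z))_{j=1}^{\ell}=:\kappa$, yields
\[
   K_n^{\mu+\sigma}(z,z)=K_n^\mu(z,z)-\kappa^\ast\bigl(T^{-1}+K_n^\mu(z_1,\dots,z_\ell;z_1,\dots,z_\ell)\bigr)^{-1}\kappa .
\]
Conjugating by the real diagonal $\Lambda=\diag(\sqrt{K_n^\mu(z_j,z_j)})$ turns $K_n^\mu(z_1,\dots,z_\ell;z_1,\dots,z_\ell)$ into $\Lambda C\Lambda$, $T^{-1}$ into $\Lambda D^2\Lambda$, and $\kappa$ into $\sqrt{K_n^\mu(z,z)}\,\Lambda b^\ast$ (where $b=(C_n^\mu(z,z_j))_j$ is the off-diagonal block of $\widetilde C$, cf.\ \eqref{cos}); substituting gives $\kappa^\ast(\cdots)^{-1}\kappa=K_n^\mu(z,z)\,b(D^2+C)^{-1}b^\ast$, i.e.\ \eqref{thm_alternation3}.

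\emph{Step 2 (alternating bounds).} Put $A:=C^{-1/2}D^2C^{-1/2}\succeq 0$ and $u:=bC^{-1/2}$. From $C^{-1}(D^2C^{-1})^j=C^{-1/2}A^jC^{-1/2}$ one gets $bC^{-1}(D^2C^{-1})^jb^\ast=uA^ju^\ast$, so $\Sigma_m=1-\sum_{j=0}^{m-1}(-1)^juA^ju^\ast$ and $b(D^2+C)^{-1}b^\ast=u(I+A)^{-1}u^\ast$. The finite identity $(I+A)^{-1}=\sum_{j=0}^{m-1}(-A)^j+(-A)^m(I+A)^{-1}$ then gives
\[
   \frac{K_n^{\mu+\sigma}(z,z)}{K_n^\mu(z,z)}=\Sigma_m-(-1)^m\,uA^m(I+A)^{-1}u^\ast ,
\]
and the remainder $uA^m(I+A)^{-1}u^\ast$ is nonnegative, since $A^m$ and $(I+A)^{-1}$ are commuting positive semi-definite matrices. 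Taking $m$ even and odd gives $\Sigma_{2k+1}\le K_n^{\mu+\sigma}(z,z)/K_n^\mu(z,z)\le\Sigma_{2k}$ for every $k\ge0$, which is the bracketing part of \eqref{thm_alternation1}.

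\emph{Step 3 (nesting of the two chains, and the main obstacle).} Subtracting consecutive instances of the last display, $\Sigma_{m+2}-\Sigma_m=(-1)^m\,uA^m(A-I)u^\ast$, using $(A^2-I)(I+A)^{-1}=A-I$. Writing $m=2k$ or $2k+1$, this difference equals, up to the sign $(-1)^m$, either $(uA^k)(A-I)(uA^k)^\ast$ or $(uA^k)(A^2-A)(uA^k)^\ast$; both middle matrices are negative semi-definite precisely when $0\preceq A\preceq I$, in which case the even chain $\Sigma_0\ge\Sigma_2\ge\cdots$ decreases and the odd chain $\Sigma_1\le\Sigma_3\le\cdots$ increases, completing \eqref{thm_alternation1} together with Step 2. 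The crux is therefore the inequality $A\preceq I$, i.e.\ $D^2\preceq C$, equivalently $K_n^\mu(z_1,\dots,z_\ell;z_1,\dots,z_\ell)^{-1}\preceq\diag(t_1,\dots,t_\ell)$, equivalently (via the variational identity after Definition~\ref{def_multi_kernel}) the statement that the least $\mu$-norm of a degree $\le k_n^\mu$ polynomial attaining prescribed values at $z_1,\dots,z_\ell$ never exceeds the corresponding $\sigma$-weighted norm. I would record this as a separate lemma; it holds comfortably when the $z_j$ lie far from $\supp(\mu)$ — there $K_n^\mu(z_j,z_j)$ is large, $D^2$ small, and $D^2\preceq C$ automatic — and without it the $\Sigma_m$ oscillate unboundedly while only the Step~2 bracketing remains.
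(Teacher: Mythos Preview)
Your Steps~1 and~2 are correct and are precisely the paper's argument: the Sherman--Morrison--Woodbury reduction to \eqref{thm_alternation3} and the substitution $E=C^{-1/2}D^2C^{-1/2}$ (your $A$) with the remainder identity $(I+A)^{-1}-\sum_{j=0}^{m-1}(-A)^j=(-A)^m(I+A)^{-1}$ to obtain that every odd $\Sigma_m$ is a lower bound and every even $\Sigma_m$ an upper bound for the ratio. The paper's proof in fact stops exactly here, declaring that this ``implies \eqref{thm_alternation1}''.

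Your Step~3 goes further than the paper and your observation is correct: the \emph{nesting} $\Sigma_1\le\Sigma_3\le\cdots$ and $\cdots\le\Sigma_4\le\Sigma_2\le\Sigma_0$ displayed in \eqref{thm_alternation1} is not a consequence of the bracketing alone. From your formula $\Sigma_{m+2}-\Sigma_m=(-1)^m uA^m(A-I)u^\ast$ one sees that the two chains are monotone for every $z$ (equivalently every $u$) exactly when $A\preceq I$, i.e.\ $D^2\preceq C$, i.e.\ $K_n^\mu(z_1,\dots,z_\ell;z_1,\dots,z_\ell)^{-1}\preceq\diag(t_1,\dots,t_\ell)$. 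This is not assumed and can fail: already for $\ell=1$ one has $\Sigma_3-\Sigma_1=|C_n^\mu(z,z_1)|^2 D^2(1-D^2)$, which is negative whenever $t_1K_n^\mu(z_1,z_1)<1$. So the paper's display \eqref{thm_alternation1}, read literally as a nested chain, overstates what the proof establishes; what is actually proved (and all that is used in Examples~4.3--4.4 and in Corollary~\ref{cor_ratio}) is the bracketing of Step~2. In the intended regime of outlier detection the paper notes right after the theorem that $\|D\|\to 0$, so $D^2\preceq C$ holds for $n$ large and the nesting does follow --- but as a general statement your caveat is well taken.
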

Before presenting the proof of this theorem, we make some important observations and provide two
examples where the result is applied.\

Notice that, in the case of outlier detection, $z_j$ lies in $\Omega\cap \mathcal S(\mu)$, with $\Omega$ the unbounded connected component of $\mathbb C \setminus \supp(\mu)$. Hence \eqref{def_kernel4} in case $d=1$ and \eqref{def_kernel4_bis} in case $d>1$ tell us that $\| D \|\to 0$ exponentially fast for $n\to \infty$. Also, $\| b \| \leq  \sqrt{\ell}$ since $|C_n^\mu(z,z_j)|\leq 1$. Hence, as long as $C^{-1}$ is bounded uniformly for $n$ sufficiently large, we see that \eqref{thm_alternation1} provides quite sharp lower and upper bounds, even for modest values of $m$. This assumption on $C$ is verified in many special cases discussed below where we show that (a unitary scaled counterpart of) $C$ has a finite limit as $n\to \infty$.

In the next two examples, we make this last assertion a bit more explicit.

\begin{example}
  We begin with the simple special case $\sigma=t_1\delta_{z_1}$ of adding one point mass.
  Here, with the notation of Theorem~\ref{thm_alternation},
  $$
   C=1,\ \ b = C^\mu_n(z,z_1), \ \ D^2 = \frac{1}{t_1 K^\mu_n(z_1,z_1)} .
  $$
  With these data we find from \eqref{thm_alternation3}
  $$
     \frac{K^{\mu+\sigma}_n(z,z)}{K^\mu_n(z,z)} = 1 - b(1+ D^2)^{-1} b^\ast = 1- \frac{|C^\mu_n(z,z_1)|^2}{1+ \frac{1}{t_1 K^\mu_n(z_1,z_1)}},
  $$
  and, from \eqref{thm_alternation1} for $m=2,$
  \begin{eqnarray*} &&
       - \frac{1}{(t_1 K_n^{\mu}(z_1,z_1))^2}  \leq  \Sigma_3 - \Sigma_2 \leq
       \frac{K^{\mu+\sigma}_n(z,z)}{K^\mu_n(z,z)} - \Sigma_2
              \\
&=&
       \frac{K^{\mu+\sigma}_n(z,z)}{K^\mu_n(z,z)} - 1 + |C_n^\mu(z,z_1)|^2 \Bigl(1-\frac{1}{t_1 K_n^{\mu}(z_1,z_1)}\Bigr) \leq 0.
  \end{eqnarray*}
  \qed
\end{example}

\begin{example} For $\sigma$ consisting of $\ell \geq 2$ point masses
as in \eqref{sig},  Theorem~\ref{thm_alternation} provides the following useful
estimates when taking
$m\in \{ 0,1,2\}.$
  From \eqref{thm_alternation1} for $m=0,1$ we observe that
  \begin{equation} \label{thm_alternation11}
   \Sigma_1 = \frac{\det C_n^\mu(z_1,...,z_\ell,z;z_1,...,z_\ell,z)}{\det C_n^\mu(z_1,...,z_\ell;z_1,...,z_\ell)}
   \leq \frac{K_n^{\mu+\sigma}(z,z)}{K_n^\mu(z,z)} \leq \Sigma_0 = 1 .
  \end{equation}
  where, in the equality on the left, we have used Schur complement techniques.
  Since $\Sigma_1$ vanishes when $z$ is  one of the point masses at $z_j$, we go one term further:
  \begin{equation} \label{thm_alternation12}
         - \Bigl( \Sigma_2 - \Sigma_1 \Bigr) \leq \frac{K_n^{\mu+\sigma}(z,z)}{K_n^\mu(z,z)} - \Sigma_2 \leq 0 ,
  \end{equation}
  with
  \begin{eqnarray} && \label{thm_alternation13}
    \Sigma_2 - \Sigma_1 = b C^{-1} D^2 C^{-1} b^*
    = \sum_{j=1}^\ell \frac{| b C^{-1} e_j |^2}{t_j K_n^\mu(z_j,z_j)}
    \\&& \nonumber
    = \sum_{j=1}^\ell \frac{|\det C_n^\mu(z_1,..,z_{j-1} z,z_{j+1},...,z_\ell;z_1,...,z_\ell)|^2}{|\det C_n^\mu(z_1,...,z_\ell;z_1,...,z_\ell)|^2}\frac{1}{t_j K_n^\mu(z_j,z_j)},
  \end{eqnarray}
  where in the last equality we have applied Cramer's rule.
  \qed
\end{example}

\begin{proof}[Proof of Theorem~\ref{thm_alternation}]
   We start by recalling that, by \eqref{eq.schur_kernel} for $\nu=\mu+\sigma$,
   \begin{eqnarray*} &&
      K_n^{\mu+\sigma}(z,z)
      = v_n^{\mu}(z) M_n(\mu+\sigma,\mu)^{-1} v_n^{\mu}(z)^* .
   \end{eqnarray*}
   By introducing the matrices $$
       V:=\left[\begin{array}{cc}
         v_n^\mu(z_1)/\sqrt{K^{\mu}_n(z_1,z_1)} \\
         \vdots \\ v_n^\mu(z_\ell)/\sqrt{K_n^\mu(z_\ell,z_\ell)}
      \end{array}\right] \in \mathbb C^{\ell \times (n+1)}
      , \quad
   $$
   and $D,C,b$ as in the statement of Theorem~\ref{thm_alternation},
   it is not difficult to check that
   $$
        M_n(\mu+\sigma,\mu) = I + V^* D^{-2} V ,
   $$
   and thus, by the Sherman-Morrison formula \cite[\S 2.1.4]{golub},
   $$
        M_n(\mu+\sigma,\mu)^{-1} = I - V^* D^{-1} ( I + D^{-1} V V^* D^{-1} )^{-1} D^{-1} V .
   $$
   Observing that
   \begin{eqnarray*} &&
        C=V V^* , \quad
        b=\frac{v_n^{\mu}(z) V_n^*}{\sqrt{K_n^{\mu}(z,z)}} ,
   \end{eqnarray*}
   we obtain \eqref{thm_alternation3}.\

   If we neglect $D,$ which usually is assumed to have small entries, we obtain in \eqref{thm_alternation3} that the right-hand side is the Schur complement $\Sigma_1=\widetilde C/C$. For $D \neq 0$ we can give lower and upper bounds: by assumption, $C$ is Hermitian positive definite, and hence has a square root $C^{1/2}$ with inverse $C^{-1/2}$. Introducing the Hermitian positive definite matrix $E=C^{-1/2} D^2 C^{-1/2}$, we have for all integers $m\geq 0$ that
   \begin{eqnarray*} &&
        (-1)^m \Bigl( (I+E)^{-1} - \sum_{j=0}^{m-1} (-1)^j E^j \Bigr) = E^m (I+E)^{-1}
         \\&&= ((E^{1/2})^m)^*(I+E)^{-1} (E^{1/2})^m \geq 0 ,
   \end{eqnarray*}
   where again we write $A \leq B$ for two Hermitian matrices if $B-A$ is positive definite. Substituting gives
   \begin{eqnarray*} &&
       (-1)^m \Bigl( (D+C)^{-1} - \sum_{j=0}^{m-1} C^{-1} (D^2C^{-1})^j \Bigr) \geq 0 ,
   \end{eqnarray*}
   which together with \eqref{thm_alternation3} implies \eqref{thm_alternation1}.
%
\end{proof}

\section{Cosine asymptotics in the univariate case}\label{section_ratio_asymptotics}

In this section we consider orthogonal polynomials of a single complex variable $z\in \mathbb C$. For obtaining asymptotics it is natural to assume throughout the section that $\supp(\mu)$ is infinite. Thus we have the nullspace $\mathcal N(\mu)=0$, $k_n^\mu=n$, and $\mathcal S(\mu)=\mathbb C$, compare with \S\ref{section_def_multivariate}.

In the case of a single complex variable, we want to show that ratio asymptotics is sufficient to derive asymptotics of the cosine $C_n^{\mu}(z,w)$ and thus of $K_n^{\mu+\sigma}(z,z)/K_n^\mu(z,z)$. Our main result
is the following.

\begin{thm}\label{thm_ratio}
    Let $\Omega$ denote a subdomain of the unbounded component of $\mathbb C \setminus \supp(\mu)$, and suppose that there is a function $g$ analytic and different from zero in $\Omega$ such that \begin{equation}\label{thm_ratio0}
        \lim_{n\to \infty} \frac{p_{n}^\mu(z)}{p_{n+1}^\mu(z)} = g(z)  
   \end{equation}
   uniformly on any compact subset of $\Omega$. Let $F$ be a compact subset of $\Omega$.
   Then $1/K^\mu_n(z,z) \to 0$ uniformly for $z\in F$,
   and \begin{equation} \label{thm_ratio1}
          \lim_{n\to \infty} C_n^\mu(z,w) \frac{|p_n^\mu(z)|}{p_n^\mu(z)} \frac{p_n^\mu(w)}{|p_n^\mu(w)|} = \frac{\sqrt{(1-|g(z)|^2)(1-|g(w)|^2)}}{1-g(z)\overline{g(w)}}
   \end{equation}
   uniformly for $z,w\in F$.\

   Furthermore, let $z_1,...,z_\ell,w_1,...w_\ell\in \Omega$ with distinct
   $g(z_1),...,g(z_\ell)$ and distinct $g(w_1),...,g(w_\ell)$. Then, uniformly for $z,w\in F$, there holds
   \begin{eqnarray} && \label{thm_ratio2}
       \lim_{n\to \infty} \Bigl|
       \frac{\det C_n^\mu(z_1,...,z_\ell,z;w_1,...,w_\ell,w)}{\det C_n^\mu(z_1,...,z_\ell;w_1,...,w_\ell)}\Bigr|
       \\&&= \nonumber
       \frac{\sqrt{(1-|g(z)|^2)(1-|g(w)|^2)}}{|1-g(z)\overline{g(w)}|}
       \prod_{j=1}^\ell \Bigl|
       \frac{g(w)- g(w_j)}{1-g(w_j)\overline{g(w)}}
       \frac{g(z)- g(z_j)}{1-g(z)\overline{g(z_j)}}
       \Bigr|.
   \end{eqnarray}
\end{thm}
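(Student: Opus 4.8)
The plan is to start from the generating-function (Christoffel--Darboux-type) identity that expresses the diagonal and polarized kernels in terms of the top two orthonormal polynomials. Writing $a_n$ for the leading coefficient of $p_n^\mu$ and using the three-term recurrence $z\,p_n^\mu = \alpha_{n+1} p_{n+1}^\mu + \beta_n p_n^\mu + \alpha_n p_{n-1}^\mu$, one has the Christoffel--Darboux formula
\begin{equation*}
   K_n^\mu(z,w) = \alpha_{n+1}\,\frac{p_{n+1}^\mu(z)\overline{p_n^\mu(w)} - p_n^\mu(z)\overline{p_{n+1}^\mu(w)}}{z-\overline{w}},
\end{equation*}
and in particular $K_n^\mu(z,z) = \alpha_{n+1}\,\big(p_{n+1}^{\mu\prime}(z)\overline{p_n^\mu(z)} - p_n^{\mu\prime}(z)\overline{p_{n+1}^\mu(z)}\big)/\,\cdots$; it is cleaner, however, to avoid differentiation and instead exploit the standard consequence of ratio asymptotics that $\alpha_n$ and $\beta_n$ converge. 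Indeed, hypothesis \eqref{thm_ratio0} together with the recurrence forces $\lim_n \alpha_n = \alpha_\infty$ and $\lim_n \beta_n = \beta_\infty$ for some constants, and $g(z)$ satisfies the quadratic $\alpha_\infty g(z)^2 - (z-\beta_\infty) g(z) + \alpha_\infty = 0$; one checks $|g(z)| < 1$ on $\Omega$ (it is the branch decaying at $\infty$), which is exactly what makes the right-hand sides of \eqref{thm_ratio1}--\eqref{thm_ratio2} finite and nonzero. The first claim, $1/K_n^\mu(z,z)\to 0$ uniformly on $F$, then follows because $\sum_{j=0}^n |p_j^\mu(z)|^2$ has terms whose ratios $|p_{j+1}^\mu(z)/p_j^\mu(z)|^2 \to 1/|g(z)|^2 > 1$, so the series diverges geometrically, uniformly on the compact set $F$ by the uniform convergence in \eqref{thm_ratio0}.

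For \eqref{thm_ratio1} I would compute the ratio $K_n^\mu(z,w)/\sqrt{K_n^\mu(z,z)K_n^\mu(w,w)}$ directly. Set $r_n(z) := p_n^\mu(z)/p_{n+1}^\mu(z) \to g(z)$. Dividing numerator and denominator of the Christoffel--Darboux quotient by $p_{n+1}^\mu(z)\overline{p_{n+1}^\mu(w)}$ gives
\begin{equation*}
   \frac{K_n^\mu(z,w)}{\alpha_{n+1}\,p_{n+1}^\mu(z)\overline{p_{n+1}^\mu(w)}} \;=\; \frac{\overline{r_n(w)} - r_n(z)}{z-\overline w},
\end{equation*}
and similarly with $w=z$ after a small limiting argument (or just by writing $K_n^\mu(z,z)$ via the telescoping sum $\sum |p_j^\mu(z)|^2$ and summing the geometric-like series, which yields $K_n^\mu(z,z)/|p_{n+1}^\mu(z)|^2 \to |g(z)|^2/(1-|g(z)|^2)$ after reindexing). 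Taking the quotient, the factors $\alpha_{n+1}$ and the moduli of the polynomials at the top index cancel, leaving
\begin{equation*}
   C_n^\mu(z,w)\,\frac{p_{n+1}^\mu(z)\overline{p_{n+1}^\mu(w)}}{|p_{n+1}^\mu(z)|\,|p_{n+1}^\mu(w)|} \;\longrightarrow\; \frac{g(z)\overline{g(w)}}{z-\overline w}\cdot\frac{(z-\overline w)}{\cdots}\;=\;\frac{\sqrt{(1-|g(z)|^2)(1-|g(w)|^2)}}{1-g(z)\overline{g(w)}},
\end{equation*}
after using the quadratic relation to rewrite $\overline{g(w)}-g(z)$ over $z-\overline w$ in terms of $1-g(z)\overline{g(w)}$; replacing the phase of $p_{n+1}^\mu$ by that of $p_n^\mu$ is harmless since $p_n^\mu/p_{n+1}^\mu\to g$ is asymptotically a fixed nonzero analytic function, so the phases differ by $\arg g(z) + o(1)$, which I would absorb. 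This is the step that requires the most care: tracking phases and making sure every cancellation is uniform on $F$; the uniformity comes for free from the uniform convergence hypothesis and the fact that $|g|$ is bounded away from $1$ on the compact $F\subset\Omega$.

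For the determinantal identity \eqref{thm_ratio2}, the idea is that $C_n^\mu(z_j,w_k) = (\text{common scalar factor}_n)\cdot \frac{g(z_j)\overline{g(w_k)}}{\cdots}$ up to phases, so after factoring the phase of $p_n^\mu$ out of each row and column, the matrix $C_n^\mu(z_1,\dots,z_\ell,z;w_1,\dots,w_\ell,w)$ converges entrywise to the Cauchy-type matrix with entries $\sqrt{(1-|g(z_i)|^2)(1-|g(w_k)|^2)}/(1-g(z_i)\overline{g(w_k)})$. Its determinant is a Cauchy determinant, evaluated by the classical formula $\det\big[1/(1-g(z_i)\overline{g(w_k)})\big] = \prod_{i<k}(g(z_i)-g(z_k))\overline{(g(w_i)-g(w_k))}\big/\prod_{i,k}(1-g(z_i)\overline{g(w_k)})$, times the row/column normalizing radicals. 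Taking the ratio of the $(\ell+1)\times(\ell+1)$ determinant to the $\ell\times\ell$ one, almost everything cancels in pairs and the telescoping product over $j=1,\dots,\ell$ displayed in \eqref{thm_ratio2} survives, with the leftover factor $\sqrt{(1-|g(z)|^2)(1-|g(w)|^2)}/|1-g(z)\overline{g(w)}|$ coming from the extra row/column. Since we only claim the modulus, all the phase ambiguities drop out at the end, which is why \eqref{thm_ratio2} is stated with absolute values; the uniformity on $F$ is again inherited from that of \eqref{thm_ratio0}, and the hypothesis that the $g(z_j)$ are distinct (and the $g(w_j)$ distinct) is exactly what keeps the limiting Cauchy determinant nonzero so the ratio is well defined in the limit. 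The main obstacle throughout is bookkeeping of the unimodular phase factors $p_n^\mu/|p_n^\mu|$; I expect no analytic difficulty beyond the convergence $\alpha_n,\beta_n\to\alpha_\infty,\beta_\infty$ and the elementary algebra of the quadratic satisfied by $g$.
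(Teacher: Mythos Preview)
Your approach has a genuine gap: the three-term recurrence and the Christoffel--Darboux identity
\[
   K_n^\mu(z,w)=\alpha_{n+1}\,\frac{p_{n+1}^\mu(z)\overline{p_n^\mu(w)}-p_n^\mu(z)\overline{p_{n+1}^\mu(w)}}{z-\overline w}
\]
are valid only for measures supported on the real line (or, in a different form, on the unit circle). The theorem is stated for an arbitrary compactly supported measure in $\mathbb C$, and the paper's principal application (\S\ref{sec_Bergman}) is to Bergman area measures on a planar domain, where multiplication by $z$ has a full Hessenberg matrix, there is no three-term recurrence, and no such closed formula for $K_n^\mu$. Consequently the convergence of ``$\alpha_n,\beta_n$'' and the quadratic $\alpha_\infty g(z)^2-(z-\beta_\infty)g(z)+\alpha_\infty=0$ that you invoke to rewrite $(\overline{g(w)}-g(z))/(z-\overline w)$ in terms of $1-g(z)\overline{g(w)}$ are simply not available here; for Bergman polynomials $g(z)=1/\Phi(z)$ with $\Phi$ the exterior conformal map, which satisfies no such algebraic relation.

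The paper bypasses this by an Abel/telescoping argument directly on the defining sum. From the ratio hypothesis one has
\[
   \bigl|p_j^\mu(z)\overline{p_j^\mu(w)}-g(z)\overline{g(w)}\,p_{j+1}^\mu(z)\overline{p_{j+1}^\mu(w)}\bigr|
   \le (2\epsilon+\epsilon^2)\sqrt{\,|p_j^\mu(z)|^2+|p_{j+1}^\mu(z)|^2\,}\sqrt{\,|p_j^\mu(w)|^2+|p_{j+1}^\mu(w)|^2\,},
\]
and summing over $j$ gives $(1-g(z)\overline{g(w)})K_n^\mu(z,w)=p_n^\mu(z)\overline{p_n^\mu(w)}(1+o(1))$ with no recurrence needed. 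You actually gesture at exactly this alternative for the diagonal (``summing the geometric-like series''); that is what must be carried out off-diagonal as well. The inequality $|g|<1$ on $\Omega$ is obtained not from a quadratic but by combining the ratio hypothesis with the fact that $\limsup_n|p_n^\mu(z)|^{1/n}>1$ in the unbounded component of $\mathbb C\setminus\supp(\mu)$. Your treatment of the determinantal statement \eqref{thm_ratio2} via the Cauchy/Pick evaluation is correct and is the same as the paper's.
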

Before giving the proof of this theorem we mention a sufficient condition for which the hypotheses hold
(see \cite[Proposition 3.4]{Sa10}) .

\begin{lem}\label{lem:rel-asy}
If there exists a function $G(z)$ analytic and non-zero at infinity such that
$$
\lim_{n \to \infty}\frac{zp_n^{\mu}(z)}{p_{n+1}^{\mu}(z)}=G(z),
$$
uniformly in some neighborhood of infinity, then
\begin{equation}\label{lem-ratio}
\lim_{n \to \infty}\frac{p_n^{\mu}(z)}{p_{n+1}^{\mu}(z)}=g(z):=\frac{G(z)}{z},
\end{equation}
uniformly on every closed subset of $\Omega:=\mathbb{C} \setminus \mathrm{Co}(\mathrm{supp}(\mu))$, where $\mathrm{Co}(S)$ denotes the convex hull of the set $S.$
Moreover, $0<|g(z)|<~1$ for $z \in \Omega$.
\end{lem}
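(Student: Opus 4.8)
Write $K_0:=\mathrm{Co}(\supp(\mu))$, a nondegenerate compact convex set since $\supp(\mu)$ is infinite, so that $\Omega=\C\setminus K_0$ is a connected unbounded domain; put $h_n:=p_n^\mu/p_{n+1}^\mu$ and let $\gamma_n>0$ be the leading coefficient of $p_n^\mu$. The strategy is a normal--families (Vitali) argument. The first ingredient is Fej\'er's theorem, that the zeros of every $p_n^\mu$ lie in $K_0$: if a zero $z_0$ of $p_n^\mu$ lay outside $K_0$ one would write $p_n^\mu=(z-z_0)q$ with $\deg q=n-1$, and then $0=\langle p_n^\mu,q\rangle_{2,\mu}=\int(z-z_0)|q(z)|^2\,d\mu(z)$, which is impossible because a line separating $z_0$ from $K_0$ makes the right-hand integral nonzero (and $q\notin\mathcal N(\mu)=\{0\}$). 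Consequently each $h_n$ is holomorphic and zero-free on $\Omega$ and extends holomorphically across $\infty$ with a simple zero there, $h_n(z)=(\gamma_n/\gamma_{n+1})z^{-1}+O(z^{-2})$ as $z\to\infty$.

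Next I would use the hypothesis to control $\{h_n\}$ near $\infty$: from $zh_n(z)\to G(z)$ uniformly on some $\{|z|\ge R\}$, with $G$ holomorphic there and $G(\infty)\neq0$, one gets $0<c_1\le|zh_n(z)|\le c_2$ on $\{|z|\ge R\}$ for all large $n$, hence $\{h_n\}$ is uniformly bounded on $\{|z|\ge R\}$ and $\gamma_n/\gamma_{n+1}\to G(\infty)$, so $\gamma_n/\gamma_{n+1}$ stays bounded above and below. The central step --- and the one genuine difficulty --- is then to upgrade this to \emph{local uniform boundedness of $\{h_n\}$ on all of $\Omega$}. I would do this as in \cite[Prop.~3.4]{Sa10}, exploiting (beyond Fej\'er) the Hessenberg recurrence $z p_n^\mu=\sum_{k=0}^{n+1}a_{n,k}\,p_k^\mu$, where $a_{n,k}=\langle z p_n^\mu,p_k^\mu\rangle_{2,\mu}$ (so $a_{n,n+1}=\gamma_n/\gamma_{n+1}$) and $\sum_k|a_{n,k}|^2=\|z\,p_n^\mu\|_{2,\mu}^2\le\bigl(\max_{z\in\supp(\mu)}|z|\bigr)^2$, together with the near-$\infty$ bound and the harmonicity on $\Omega$ of $\log|h_n|$ (note $v_n:=\log|h_n|+g_\Omega(\cdot,\infty)$ extends harmonically across $\infty$ to a function on $\widehat\C\setminus K_0$ taking there the bounded value $\log(\gamma_n/\gamma_{n+1})-\log\mathrm{cap}(K_0)$), so as to majorize $|h_n|$ uniformly on compact subsets of $\Omega$. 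Granting this bound, Montel's theorem shows $\{h_n\}$ is a normal family.

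With normality established, the rest is routine. Any locally uniform subsequential limit of $\{h_n\}$ is holomorphic on $\Omega$ and equals $G(z)/z$ on $\{|z|\ge R\}$, so, $\Omega$ being connected, it is the unique holomorphic continuation of $G/z$ to $\Omega$; hence $h_n$ converges locally uniformly on $\Omega$ to this continuation $g$, which establishes \eqref{lem-ratio}. Since the $h_n$ are zero-free on the connected set $\Omega$, Hurwitz's theorem makes $g$ zero-free (it is not identically zero, as $g(z)\sim G(\infty)/z\ne0$ near $\infty$), giving $|g|>0$ on $\Omega$. Finally I extend $g$ to $\Omega\cup\{\infty\}$ by $g(\infty)=0$, a nonconstant holomorphic function there. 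For fixed $z\in\Omega$ put $a_m:=|p_m^\mu(z)|^2=K_m^\mu(z,z)-K_{m-1}^\mu(z,z)>0$; by \eqref{lem-ratio}, $a_{m+1}/a_m=|p_{m+1}^\mu(z)/p_m^\mu(z)|^2\to|g(z)|^{-2}$, so $|g(z)|>1$ would force $\sum_m a_m=\lim_m K_m^\mu(z,z)$ to converge, contradicting $K_m^\mu(z,z)\to\infty$ on $\Omega$ (a consequence of \eqref{def_kernel4}, the Green function of $\Omega$ being strictly positive there). Thus $|g|\le1$ on $\Omega$, and the maximum modulus principle on $\Omega\cup\{\infty\}$ upgrades this to $|g|<1$ on $\Omega$: equality at an interior point would, with $g(\infty)=0$, force $g$ to be a unimodular constant.

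The main obstacle is precisely the middle step: passing from boundedness of $\{h_n\}$ near $\infty$ to local uniform boundedness on all of $\Omega$, i.e.\ excluding that $p_n^\mu/p_{n+1}^\mu$ grows without bound away from $\infty$ as zeros of $p_{n+1}^\mu$ approach $\partial K_0$. Fej\'er's theorem together with the maximum principle alone do not close this gap (the boundary behaviour of $\log|h_n|$ on $\partial K_0$ is not controlled), and, as no density or regularity hypothesis is imposed on $\mu$, one cannot invoke equilibrium distribution of the zeros; it is the Hessenberg / multiplication-operator estimate that carries this step, and for the details I would follow \cite[Prop.~3.4]{Sa10}.
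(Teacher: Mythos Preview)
The paper does not actually supply a proof of this lemma; it simply states the result and cites \cite[Proposition~3.4]{Sa10} for the argument. Your proposal is therefore already more detailed than what the paper offers, and it follows the same route as the cited reference: Fej\'er's theorem to localize the zeros, a normal-families/Vitali argument to get convergence on all of $\Omega$ from convergence near $\infty$, Hurwitz for $|g|>0$, and the maximum principle combined with divergence of $K_n^\mu(z,z)$ on $\Omega$ for $|g|<1$.

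Your self-identified gap is the genuine one: passing from uniform boundedness of $h_n=p_n^\mu/p_{n+1}^\mu$ near $\infty$ to local uniform boundedness on every compact subset of $\Omega=\C\setminus\mathrm{Co}(\supp(\mu))$. Fej\'er alone does not close it, as you note, and the harmonic-majorant sketch you give (via $v_n=\log|h_n|+g_\Omega$) still lacks control of the boundary values on $\partial K_0$. It is precisely the argument in \cite[Proposition~3.4]{Sa10} that supplies this step, and since both you and the paper defer to that reference there is nothing further to compare. The remaining portions of your argument (Hurwitz, the ratio-test contradiction with \eqref{def_kernel4}, and the maximum-modulus upgrade from $|g|\le1$ to $|g|<1$ using $g(\infty)=0$) are correct and cleanly executed.
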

\begin{rem}
While it is possible for the limit \eqref{lem-ratio} to hold everywhere in
$\mathbb{C} \setminus \mathrm{supp}(\mu),$ it cannot hold locally uniformly with a limit
 of modulus less than one in any domain containing a point of $\mathrm{supp}(\mu)$.
\end{rem}

\noindent \emph{Proof of Theorem~\ref{thm_ratio}}.
   We may rewrite condition \eqref{thm_ratio0} in the following
   form which will be used in what follows in the proof: for all $\epsilon>0$ there exists an $N=N(\epsilon,F)$ such that,
   for all $n \geq N-1$ and $z\in F$,
   \begin{equation} \label{assumption_ratio}
          |{p_{n}^\mu(z)-g(z)p_{n+1}^\mu(z)}|
          \leq \epsilon \,  {\sqrt{|p_{n}^\mu(z)|^2+|p_{n+1}^\mu(z)|^2}} .
   \end{equation}
   Thus, for $z,w\in F$ and $n \geq N-1$
   \begin{eqnarray} && \label{eq.ratio11}
          |{p_{n}^\mu(z)\overline{p_{n}^\mu(w)}-
          g(z)p_{n+1}^\mu(z)\overline{g(w)p_{n+1}^\mu(w)}}|
          \\&\leq& \nonumber
          |(p_{n}^\mu(z)-g(z)p_{n+1}^\mu(z))\overline{p_{n}^\mu(w)}|
          \\&& \nonumber
          +           |\overline{(p_{n}^\mu(w)-g(w)p_{n+1}^\mu(w))} (p_n^\mu(z)-g(z) p_{n+1}^\mu(z))|
          \\&& \nonumber
          +           |\overline{(p_{n}^\mu(w)-g(w)p_{n+1}^\mu(w))} p_{n}^\mu(z)|
         \\&\leq &  \nonumber
       (2\epsilon +\epsilon^2) \,  {\sqrt{|p_{n}^\mu(z)|^2+|p_{n+1}^\mu(z)|^2}} \,  {\sqrt{|p_{n}^\mu(w)|^2+|p_{n+1}^\mu(w)|^2}} .
   \end{eqnarray}
   For $z=w$ we deduce for sufficiently small $\epsilon$ that
   $$
   \frac{|g(z)|^2 - 2\epsilon - \epsilon^2}{1+ 2\epsilon + \epsilon^2} |p_{n+1}^\mu(z)|^2  \leq |p_{n}^\mu(z)|^2 \leq
   \frac{|g(z)|^2 + 2\epsilon + \epsilon^2}{1- 2\epsilon - \epsilon^2} |p_{n+1}^\mu(z)|^2 ;
   $$
   that is, by assumption on $g$ there are constants $\widetilde q,q >0$ such that, for all $n \geq N$ and $z\in F$,
   $$
             \widetilde q \, | p_{n+1}^\mu(z) | \leq | p_n^\mu(z)| \leq
             q \, | p_{n+1}^\mu(z) |  ,
   $$
   implying that
   \begin{equation} \label{geometric}
            \widetilde q^{n-N} \, | p_{n}^\mu(z) | \leq | p_N^\mu(z)| \leq
             q^{n-N} \, | p_{n}^\mu(z) |  .
    \end{equation}
    By the assumptions on $\Omega$ and $F$ we know from \cite[Theorem~1]{Ambromadze} that
    $$
        \limsup_{n \to \infty} |p_n^\mu(z)|^{1/n} > 1,
    $$
    which together with \eqref{geometric} and $\widetilde q\neq 0$ implies that $p_N^\mu(z)\neq 0$ for $z\in F$, and hence
   $$
         c_1 := \min_{z\in F} |p_N^\mu(z)|^2 > 0 , \quad
         c_2 := \max_{z\in F} K_N^\mu(z,z) <\infty.
   $$
   In addition, taking $n$th roots in \eqref{geometric}, we find that $q<1$. (The same argument  implies that $|g(z)|<1$ for $z\in \Omega$.)
   In particular,
   \begin{eqnarray*}
         K_{n}^\mu(z,z) \geq |p_n^\mu(z)|^2 \geq q^{2(N-n)} |p_N^\mu(z)|^2 \geq c_1 q^{2(N-n)}, 
   \end{eqnarray*}
   showing that $1/K_n^\mu(z,z) \to 0$ uniformly on $F$, as claimed in the theorem.
   Conversely,
   \begin{eqnarray*}
        K_{n}^\mu(z,z) &=& K_{N}^\mu(z,z) + \sum_{j=N+1}^n |p_j^\mu(z)|^2
        \\&\leq&
        c_2 + |p_n^\mu(z)|^2 \sum_{j=0}^n q^{n-j} \leq c_2 + \frac{|p_n^\mu(z)|^2}{1-q}.
   \end{eqnarray*}
   As a consequence,
   \begin{eqnarray*} &&
        |p_N^\mu(z)|\leq \sqrt{K_N^\mu(z,z)} = o(|p^\mu_n(z)|)_{n\to \infty}, \quad
        \sqrt{K_n^\mu(z,z)} = \mathcal O(|p^\mu_n(z)|)_{n\to \infty}, \quad
   \end{eqnarray*}
   uniformly on $F$.

   Summing the inequality \eqref{eq.ratio11} for $j=N,N+1,...,n-1$ we obtain with help of the Cauchy-Schwarz inequality that
   \begin{eqnarray*} &&
       \Bigl| (1-g(z)\overline{g(w)}) \Bigl( K_n^\mu(z,w) - K_N^\mu(z,w) \Bigr) - p^\mu_n(z)\overline{p^\mu_n(w)} + p^\mu_N(z)\overline{p^\mu_N(w)} \Bigr|
       \\&&\leq \sum_{j=N}^{n-1}
          |{p_{j}^\mu(z)\overline{p_{j}^\mu(w)}-
          g(z)p_{j+1}^\mu(z)\overline{g(w)p_{j+1}^\mu(w)}}|
       \\&& \leq (2 \epsilon +\epsilon^2)\, \sum_{j=N}^{n-1} \,  {\sqrt{|p_{j}^\mu(z)|^2+|p_{j+1}^\mu(z)|^2}} \,  {\sqrt{|p_{j}^\mu(w)|^2+|p_{j+1}^\mu(w)|^2}}
       \\&& \leq 2 \, (2 \epsilon +\epsilon^2) \, \sqrt{K_n^\mu(z,z)K_n^\mu(w,w)}.
   \end{eqnarray*}
   Since $\epsilon>0$ is arbitrary, we conclude that
   \begin{eqnarray} && \label{eq.ratio_result1}
        (1-g(z)\overline{g(w)}) K_n^\mu(z,w) = p^\mu_n(z)\overline{p^\mu_n(w)} (1+o(1)_{n\to \infty}),
        \\&& \label{eq.ratio_result2}
        (1-|g(z)|^2) K_n^\mu(z,z) = |p^\mu_n(z)|^2 (1+o(1)_{n\to \infty}),
   \end{eqnarray}
   uniformly on $F$. This implies \eqref{thm_ratio1}.

   For a proof of \eqref{thm_ratio2}, we may suppose that $z_1,...,z_\ell,w_1,...,w_\ell\in F$, and neglect the factors of modulus $1$ on the left-hand side of \eqref{thm_ratio1}. Since the square roots on the right-hand side of \eqref{thm_ratio1} can be factored by linearity of the determinant, for establishing \eqref{thm_ratio2} it is sufficient to recall the well-known expression for the determinant of a Pick matrix, namely
   \begin{equation}\label{pick}
        \det \Bigl( \frac{1}{1-a_j\overline{b_k}}\Bigr)_{j,k=1,...,\ell}
        = \frac{\prod_{j,k=1,j<k}^\ell(a_k-a_j)(\overline{b_k}-\overline{b_j})}
        {\prod_{j,k=1}^\ell(1-a_j\overline{b_k})}.
   \end{equation}
   This last formula follows by considering the left-hand determinant as a function of $a_\ell$, having simple roots at $a_1,...,a_{\ell-1}$ and simple poles at
   $1/\overline{b_1},...,1/\overline{b_\ell}$ (and similarly as a function of $\overline{b_\ell}$). Hence
   $$
        \det \Bigl( \frac{1}{1-a_j\overline{b_k}}\Bigr)_{j,k=1,...,\ell}
        = \frac{C}{1-a_\ell \overline{b_\ell}} \prod_{j=1}^{\ell-1}
        \frac{a_\ell-a_j}{1-a_\ell \overline{b_j}}
        \frac{\overline{b_\ell}-\overline{b_j}}{1-a_j \overline{b_\ell}},
   $$
   with a constant $C$ independent of $a_\ell$. Since the residual at $a_\ell=1/\overline{b_\ell}$ of the right-hand product equals $1$ we obtain, by recurrence on $\ell,$ formula \eqref{pick}.
$\square$\\

 Combining the asymptotic findings of Theorem~\ref{thm_ratio} with our upper and lower bounds \eqref{thm_alternation11} and \eqref{thm_alternation12} for $K_n^{\mu+\sigma}(z,z)/K_n^{\mu}(z,z)$ we prove the following result for adding $\ell$ distinct point masses from $\Omega$; that is, $\sigma=\sum_{j=1}^\ell t_j \delta_{z_j}$.

\begin{cor}\label{cor_ratio}
   Under the assumptions of Theorem~\ref{thm_ratio}, and in particular $z_1,...,z_\ell \in \Omega$ with distinct $g(z_1),...,g(z_\ell)$, we have uniformly on compact subsets of $\Omega$ 
   \begin{equation} \label{cor_ratio1}
       \lim_{n\to \infty} \frac{K_n^{\mu+\sigma}(z,z)}{K_n^{\mu}(z,z)}
        = \Bigl| \prod_{j=1}^\ell \frac{g(z)-g(z_j)}{1-g(z)\overline{g(z_j)}} \Bigr|^2
      ,
   \end{equation}
   and, at a point mass $z=z_m,$
   \begin{equation} \label{cor_ratio2}
        K_n^{\mu+\sigma}(z_m,z_m) = \frac{1}{t_m} \,  \left( 1 + \mathcal O\left(\frac{1}{K_n^\mu(z_m,z_m)}\right)\right)_{n\to \infty}.
    \end{equation}
\end{cor}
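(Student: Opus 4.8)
\noindent\emph{Proof plan.}
The plan is to combine the non-asymptotic sandwich of Theorem~\ref{thm_alternation} with the cosine asymptotics of Theorem~\ref{thm_ratio}. Fix a compact set $F\subset\Omega$, enlarged if necessary so that it also contains $z_1,\dots,z_\ell$, and take $n$ large enough for Lemma~\ref{lem_K_invertible} to apply. From \eqref{thm_alternation1} one has, for every $z$, the sandwich $\Sigma_1\le K_n^{\mu+\sigma}(z,z)/K_n^\mu(z,z)\le\Sigma_2$ with $\Sigma_2-\Sigma_1=bC^{-1}D^2C^{-1}b^\ast$. Here $\|b\|\le\sqrt{\ell}$ since $|C_n^\mu(z,z_j)|\le1$; $\|D\|\to0$ because $z_j\in\Omega$ forces $K_n^\mu(z_j,z_j)\to\infty$ by Theorem~\ref{thm_ratio}; and $\|C^{-1}\|$ stays bounded, since conjugating $C$ by a suitable unitary diagonal matrix built from the phases of $p_n^\mu(z_1),\dots,p_n^\mu(z_\ell)$ and invoking \eqref{thm_ratio1} shows that $C$ converges, up to this rescaling, to the matrix $C_\infty=\bigl(\sqrt{(1-|g(z_j)|^2)(1-|g(z_k)|^2)}/(1-g(z_j)\overline{g(z_k)})\bigr)_{j,k=1}^\ell$, a diagonally scaled Pick matrix, hence positive definite since the $g(z_j)$ are distinct and $|g(z_j)|<1$. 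Consequently $\Sigma_2-\Sigma_1\le\ell\,\|C^{-1}\|^2\|D\|^2\to0$ uniformly on $F$, so it remains to identify $\lim_n\Sigma_1$.

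For \eqref{cor_ratio1}, I would use \eqref{thm_alternation11}, which identifies $\Sigma_1$ with the nonnegative Schur complement $\det C_n^\mu(z_1,\dots,z_\ell,z;z_1,\dots,z_\ell,z)/\det C_n^\mu(z_1,\dots,z_\ell;z_1,\dots,z_\ell)$, and then apply \eqref{thm_ratio2} with $w_j:=z_j$ and $w:=z$. The prefactor becomes $\sqrt{(1-|g(z)|^2)^2}/\bigl|1-|g(z)|^2\bigr|=1$, and since $\overline{1-g(z)\overline{g(z_j)}}=1-\overline{g(z)}g(z_j)$ the two products appearing in \eqref{thm_ratio2} have equal modulus, so that $\lim_n\Sigma_1=\prod_{j=1}^\ell\bigl|(g(z)-g(z_j))/(1-g(z)\overline{g(z_j)})\bigr|^2$, uniformly on $F$ by the uniformity in Theorem~\ref{thm_ratio}. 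This gives \eqref{cor_ratio1}.

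For \eqref{cor_ratio2} I would argue directly from the extremal characterization \eqref{def_kernel3_ter}, valid here because $\mathcal S(\mu+\sigma)=\mathbb C$, together with $\|p\|_{2,\mu+\sigma}^2=\|p\|_{2,\mu}^2+\sum_{j=1}^\ell t_j|p(z_j)|^2$. Any $p$ of degree $\le n$ with $p(z_m)=1$ satisfies $\|p\|_{2,\mu+\sigma}^2\ge t_m$, whence $1/K_n^{\mu+\sigma}(z_m,z_m)\ge t_m$. For the reverse bound I would take as competitor the minimizer $p^\ast$ of $\|p\|_{2,\mu}^2$ subject to $\deg p\le n$, $p(z_m)=1$ and $p(z_j)=0$ for $j\ne m$; the multipoint variational identity recorded after Definition~\ref{def_multi_kernel} gives $\|p^\ast\|_{2,\mu}^2=\bigl(K_n^\mu(z_1,\dots,z_\ell;z_1,\dots,z_\ell)^{-1}\bigr)_{mm}=(C^{-1})_{mm}/K_n^\mu(z_m,z_m)$, using $K_n^\mu(z_1,\dots,z_\ell;z_1,\dots,z_\ell)=\Gamma C\Gamma$ with $\Gamma=\diag(\sqrt{K_n^\mu(z_j,z_j)})$. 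Then $\|p^\ast\|_{2,\mu+\sigma}^2=t_m+(C^{-1})_{mm}/K_n^\mu(z_m,z_m)$, so $t_m\le1/K_n^{\mu+\sigma}(z_m,z_m)\le t_m+(C^{-1})_{mm}/K_n^\mu(z_m,z_m)$, and since $(C^{-1})_{mm}$ is bounded by the first step, inverting yields $K_n^{\mu+\sigma}(z_m,z_m)=\frac{1}{t_m}\bigl(1+\mathcal O(1/K_n^\mu(z_m,z_m))\bigr)$.

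The hard part will be the uniform boundedness of $\|C^{-1}\|$, equivalently a uniform lower bound on the least eigenvalue of the cosine matrix $C$ as $n\to\infty$: this is exactly where the ratio asymptotics hypothesis and the distinctness of $g(z_1),\dots,g(z_\ell)$ enter, through convergence of the phase-normalized $C$ to the positive-definite scaled Pick matrix $C_\infty$. A secondary, purely bookkeeping issue is keeping track of the unimodular factors $|p_n^\mu(\cdot)|/p_n^\mu(\cdot)$ in \eqref{thm_ratio1}–\eqref{thm_ratio2} so that they cancel when $\Sigma_1$ and $C^{-1}$ are assembled.
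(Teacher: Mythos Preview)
Your argument is correct. For \eqref{cor_ratio1} you follow the same skeleton as the paper: sandwich $K_n^{\mu+\sigma}/K_n^\mu$ between $\Sigma_1$ and $\Sigma_2$, identify $\lim_n\Sigma_1$ via \eqref{thm_alternation11} and \eqref{thm_ratio2}, and prove $\Sigma_2-\Sigma_1\to0$. The paper handles this last step slightly differently, invoking the Cramer-rule identity \eqref{thm_alternation13} together with \eqref{thm_ratio2} to get the explicit form of each summand; your route via $\|b\|\le\sqrt\ell$, $\|D\|\to0$, and $\|C^{-1}\|$ bounded (through the convergence $\Theta^\ast C\Theta\to C_\infty$ to a positive-definite scaled Pick matrix) is a clean alternative that reaches the same conclusion.

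For \eqref{cor_ratio2} you take a genuinely different path. The paper stays inside the $\Sigma_m$ framework: at $z=z_m$ one has $b^\ast=Ce_m$, hence $\Sigma_1=0$, $\Sigma_2=\frac{1}{t_mK_n^\mu(z_m,z_m)}$, and $\Sigma_2-\Sigma_3=\frac{(C^{-1})_{mm}}{(t_mK_n^\mu(z_m,z_m))^2}$, from which \eqref{cor_ratio2} follows after multiplying through by $K_n^\mu(z_m,z_m)$. Your variational argument—trivial lower bound $\|p\|_{2,\mu+\sigma}^2\ge t_m$, upper bound from the multipoint extremal polynomial $p^\ast$ vanishing at the other mass points—is more self-contained and avoids $\Sigma_3$ altogether, at the cost of importing the multipoint formula after Definition~\ref{def_multi_kernel}. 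Both approaches rely on the same fact, the boundedness of $(C^{-1})_{mm}$, which you already secured in the first part.
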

\begin{proof}
   We appeal to a special case of Theorem~\ref{thm_alternation} that asserts
   $$
         \Sigma_1 \leq \Sigma_3 \leq \frac{K_n^{\mu+\sigma}(z,z)}{K_n^{\mu}(z,z)} \leq \Sigma_2,
   $$ where we recall that tacitly all quantities depend on $n$ and $z$.  We will also use the abbreviations
   \begin{eqnarray*} &&
      B(z) := \prod_{j=1}^\ell \frac{g(z)-g(z_j)}{1-g(z)\overline{g(z_j)}} , \quad
      B_j(z):= \prod_{k=1,k\neq j}^\ell \frac{g(z)-g(z_k)}{1-g(z)\overline{g(z_k)}}.
   \end{eqnarray*}
   The matrix $C=C_n^\mu(z_1,...,z_\ell;z_1,...,z_\ell)$  is positive semi-definite; hence by \eqref{thm_alternation11}
   \begin{eqnarray*}
      \Sigma_1 &=& \frac{\det C_n^\mu(z_1,...,z_\ell,z;z_1,...,z_\ell,z)}{\det C_n^\mu(z_1,...,z_\ell;z_1,...,z_\ell)}
      \\&=&
       \Bigl|\frac{\det C_n^\mu(z_1,...,z_\ell,z;z_1,...,z_\ell,z)}{\det C_n^\mu(z_1,...,z_\ell;z_1,...,z_\ell)}\Bigr| , 
   \end{eqnarray*}
   which, by \eqref{thm_ratio2} for $w_j=z_j$ and $z=w,$ tends to $|B(z)|^2$ as $n\to \infty$ for $z\in \Omega$. It remains to examine $\Sigma_2-\Sigma_1$: we recall from \eqref{thm_alternation13} that
   $$
       \Sigma_2-\Sigma_1 = \sum_{j=1}^\ell
       \frac{| \det C_n^\mu(z_1,...,z_{j-1},z,z_{j+1},...,z_\ell;z_1,...,z_\ell)|^2}{| \det C_n^\mu(z_1,...,z_\ell;z_1,...,z_\ell)|^2}
       \frac{1}{t_j K_n^\mu(z_j,z_j)} ,
   $$
   which according to \eqref{thm_ratio2} behaves like
   $$
       \sum_{j=1}^\ell
       \frac{1}{t_j K_n^\mu(z_j,z_j)} \Bigl( 1 - \Bigl| \frac{g(z)-g(z_j)}{1-g(z)\overline{g(z_j)}} \Bigr|^2\Bigr) \, \Bigl| \frac{B_j(z)B_j(z_j)}{B_j(z_j)B_j(z_j)} \Bigr|^2,
   $$
   where $B_j(z_j)\neq 0$ by assumption on $g(z_1),...,g(z_\ell)$. From Theorem~\ref{thm_ratio} and its proof we know that $1/K_n^\mu(z_j,z_j)$ exponentially decays to $0$ for all $j$  as $n \to \infty$, implying that \eqref{cor_ratio1} holds.

   Notice that \eqref{cor_ratio1} is not very useful at a point mass $z=z_m$ since then $B(z_m)=0$ and even $\Sigma_1=0$, implying that $b^* = C e_m$, the $m$th column of $C$. Here it is more helpful to return to the definition of $\Sigma_j$ and observe that
   \begin{eqnarray*} &&
        \Sigma_2-\Sigma_1 = b C^{-1} D^2 C^{-1} b^* = e_m^* D^2 e_m = \frac{1}{t_m K_n^\mu(z_m,z_m)} , \quad
       \\&&
       \Sigma_2-\Sigma_3 = b C^{-1} D^2 C^{-1} D^2 C^{-1} b^* = \frac{e_m^* C^{-1} e_m}{\Bigl( t_m K_n^\mu(z_m,z_m) \Bigr)^2} .
   \end{eqnarray*}
   From Cramer's rule and \eqref{thm_ratio2} we deduce that $e_m^* C^{-1} e_m$ has a limit different from $0$ as $n\to \infty$.
   Hence \eqref{cor_ratio2}  holds.
\end{proof}

\begin{rem}
  We have precise asymptotics with error terms in \eqref{cor_ratio2}, but such error terms are missing in \eqref{cor_ratio1} as well as in \eqref{thm_ratio1} and \eqref{thm_ratio2}.

  Under the assumption of Theorem~\ref{thm_ratio}, a slightly more careful error analysis shows that the maximal error for $z\in F$ in \eqref{eq.ratio_result1}, \eqref{eq.ratio_result2} and thus in \eqref{thm_ratio1} is bounded by a constant times
  $$
        \max_{z\in F}
      \Bigl| \frac{p_{n}^\mu(z)-g(z)p_{n+1}^\mu(z)}
                                {\sqrt{|p_{n}^\mu(z)|^2+|p_{n+1}^\mu(z)|^2}} \Bigr|
  $$ plus an exponentially decreasing term. Since $|C_n^\mu(\cdot,\cdot)|\leq 1$, the same is true for \eqref{thm_ratio2} by linearity of the determinant, and thus also for \eqref{cor_ratio1}.
\end{rem}

\begin{rem}
  In the examples presented in the next section, $\supp(\mu)$ is compact with smooth boundary, $\Omega$ is the unbounded connected component of $\mathbb C \setminus \supp(\mu)$ being supposed to be simply connected, and \eqref{thm_ratio0} holds with $g(z)=1/\Phi(z)$, with $\Phi$ the Riemann conformal map from $\Omega$ onto the exterior of the closed unit disk. In particular, $g$ is injective and, with $z_1,...,z_\ell$, also $g(z_1),...,g(z_\ell)$ are distinct. Moreover, the limit in \eqref{cor_ratio1} is different from $0$ for $z\in  \Omega$ different from a point mass.

  Also, in this case, $\Omega$ is known to be regular with respect to the Dirichlet problem, and the measures $\mu$ of \S\ref{sec_Bergman} are all of class {\bf Reg}, which together with \eqref{def_kernel4'} allows us to specify the rate of geometric convergence mentioned in the preceding Remark.
\end{rem}


\section{Asymptotics in Bergman space}\label{sec_Bergman}

In this section we discuss a special and well known case of orthogonality in one complex variable where the precise asymptotic behavior of the Christoffel-Darboux kernels, both  in (a small neighborhood of) the support of the measure of orthogonality and far enough from the support is precisely known; Theorem~\ref{thm_Bergman} below contains precise ratio asymptotics, relevant for our study. The findings of the previous sections are then put to work, yielding the performance of the leverage score, see Corollary~\ref{cor2_Bergman}.

Throughout this section $G$ denotes a bounded open subset of the complex plane with simply connected complement, with boundary $\Gamma$; $\mu$ stands for the area measure on $\mbox{Clos}(G)=G \cup \Gamma$. We denote by $\Phi$ the Riemann outer conformal map from $\mathbb C\setminus \mbox{Clos}(G)$ onto $\mathbb C \setminus \mbox{Clos}(\mathbb D)$ fixing the point at infinity.
As usual, for $r>1$, the compact level sets are defined by their complement,
$$
     \mathbb C \setminus G_r:= \{ z \in \mathbb C \setminus \mbox{Clos}(G): |\Phi(z)|>r \}.
$$
Henceforth $c_j$ is used to denote some absolute, strictly positive constants neither depending on $z$ nor on $n$.

Bergman orthogonal polynomials and their kernels have been used quite successfully as building blocks of conformal maps; the long history of their asymptotics is recorded in the monographs \cite{Gabook87}, \cite{Suetin1}; the recent article \cite{BB-NS} deals with the case of piecewise analytic boundaries. To provide some comparison basis, we start by the simplest case of the unit disk.
\begin{example}\label{exa_Bergman}
   To be more explicit, set $\mu=\mu_{\mathbb D}$, the area measure on the unit disk. Since $p_n^{\mu_{\mathbb D}}(w)=\sqrt{(n+1)/\pi} w^n$,  explicit formulas for the Christoffel kernel are at hand; in particular,
   \begin{eqnarray}
   &&\label{exa1_Bergman}
      \max_{w\in \supp(\mu_{\mathbb D})} \, K_n^{\mu_{\mathbb D}}(w,w)=\frac{(n+1)(n+2)}{2\pi}, \quad
   \\ &&\label{exa2_Bergman}
      K_n^{\mu_{\mathbb D}}(w,w) = \frac{n+1}{\pi} \frac{|w|^{2n+2}}{|w|^2-1} (1 + \mathcal O(1/n)_{n\to \infty}) ,
   \end{eqnarray}
   the second relation being true uniformly on compact subsets of $\mathbb C \setminus \supp(\mu_{\mathbb D})$.
\end{example}

The following theorem collects all relevant estimates and asymptotics.

\begin{thm}\label{thm_Bergman}
   Let $\mu$ be area measure on $G$, and suppose that $\Gamma$ is a Jordan curve which is either piecewise analytic without cusps, or otherwise possesses an arc length parametrization with a derivative being $1/2$-H\"older continuous.

   \bigskip {\bf (a)} For all $n \geq 0$ and $z\in G$:
   $$
        K_n^\mu(z,z) \leq \frac{1}{\pi \mbox{$\dist$}(z,\Gamma)^2}.
   $$

   \bigskip {\bf (b)} With $r(n):=\sqrt{1+\tfrac{1}{n+1}}$ the estimates 
   \begin{eqnarray*}
      \frac{1}{e} \max_{z\in G_{r(n)}} K_n^\mu(z,z)
      &\leq& \max_{z\in \supp(\mu)} K_n^\mu(z,z)
      \leq \max_{z\in G_{r(n)}} K_n^\mu(z,z)
      \\&\leq&
      \gamma_n:=      \frac{c_1}{\mbox{$\dist$}(\Gamma,\partial G_{r(n)})^2}.
   \end{eqnarray*}
   hold for all $n \geq 0$.

   \bigskip {\bf (c)} We have
   $$
         K_n^\mu(z,z) = \frac{n+1}{\pi} \frac{|\Phi'(z)|^2}{|\Phi(z)|^2-1}|\Phi(z)|^{2n+2} (1+\mathcal O(\frac{1}{n})_{n\to \infty})
   $$ uniformly on compact subsets of $\mathbb C \setminus \supp(\mu)$.

   \bigskip {\bf (d)} 
   The asymptotics 
   $$
      \frac{p_n^\mu(z)}{p_{n+1}^\mu(z)} =
      \frac{1}{\Phi(z)}          (1+\mathcal O((1/n)_{n\to \infty})
   $$
  is valid uniformly on compact subsets of $\mathbb C \setminus \supp(\mu)$.
\end{thm}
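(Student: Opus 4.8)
\emph{Proof strategy.} The whole of parts (c) and (d) will be deduced from the classical strong (Carleman--Suetin type) asymptotics for the Bergman orthonormal polynomials, which under the stated regularity of $\Gamma$ is available from \cite{Gabook87,Suetin1,BB-NS}: uniformly on each compact subset $F$ of $\C\setminus\mbox{Clos}(G)$ one has
\begin{equation*}
   p_n^\mu(z)=\sqrt{\tfrac{n+1}{\pi}}\,\Phi'(z)\,\Phi(z)^n\,(1+\eta_n(z)),\qquad \sup_{z\in F}|\eta_n(z)|=\mathcal O(1/n)_{n\to\infty}.
\end{equation*}
For $G=\D$ this is just the exact identity $p_n^{\mu_\D}(z)=\sqrt{(n+1)/\pi}\,z^n$ of Example~\ref{exa_Bergman}. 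Granting this, part (d) is immediate: dividing the asymptotics for $p_n^\mu$ by that for $p_{n+1}^\mu$ the factor $\Phi'(z)$ cancels, $\Phi(z)^n/\Phi(z)^{n+1}=1/\Phi(z)$, $\sqrt{(n+1)/(n+2)}=1+\mathcal O(1/n)$, and $(1+\eta_n)/(1+\eta_{n+1})=1+\mathcal O(1/n)$, all uniformly on compact subsets of $\C\setminus\supp(\mu)$.

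For part (c) I would sum squares. Writing $t:=|\Phi(z)|^2>1$ on $F$,
\begin{equation*}
   K_n^\mu(z,z)=\sum_{j=0}^n|p_j^\mu(z)|^2=\frac{|\Phi'(z)|^2}{\pi}\sum_{j=0}^n(j+1)\,t^{\,j}\,|1+\eta_j(z)|^2.
\end{equation*}
Since $t>1$, the finitely many terms with $j$ below a fixed threshold contribute only $\mathcal O(1)$ and are negligible against the dominant ones; for the rest $|1+\eta_j(z)|^2=1+\mathcal O(1/j)$, so that the elementary identity $\sum_{j=0}^n(j+1)t^j=\frac{(n+1)t^{\,n+1}}{t-1}(1+\mathcal O(1/n))$ (uniform for $t$ in a compact subset of $(1,\infty)$), together with $\sum_{j=0}^n t^j=\mathcal O(t^{\,n})$, yields $K_n^\mu(z,z)=\frac{|\Phi'(z)|^2}{\pi}\cdot\frac{(n+1)t^{\,n+1}}{t-1}(1+\mathcal O(1/n))$, which is the claimed formula after substituting $t=|\Phi(z)|^2$.

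Part (a) is elementary and self-contained: for $z\in G$ put $\rho=\dist(z,\Gamma)$, so that the open disk $D(z,\rho)$ lies in $G$; since $|p|^2$ is subharmonic, the sub-mean-value inequality gives $|p(z)|^2\le\frac{1}{\pi\rho^2}\int_{D(z,\rho)}|p|^2\,dA\le\frac{1}{\pi\rho^2}\|p\|_{2,\mu}^2$ for every polynomial $p$ with $\deg p\le n$, and taking the supremum together with the extremal characterisation $K_n^\mu(z,z)=\sup\{|p(z)|^2/\|p\|_{2,\mu}^2:\deg p\le n\}$ (cf.\ \eqref{def_kernel3}) gives $K_n^\mu(z,z)\le1/(\pi\rho^2)$. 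For part (b), the middle inequality holds since $\supp(\mu)=\mbox{Clos}(G)\subset G_{r(n)}$. For the left one take $z\in G_{r(n)}$: if $z\in\mbox{Clos}(G)$ there is nothing to prove, and otherwise $1<|\Phi(z)|\le r(n)$, so for an extremal polynomial $p$ of degree $\le n$ and a point $\zeta^\ast\in\mbox{Clos}(G)$ with $|p(\zeta^\ast)|=\|p\|_{L^\infty(\mbox{Clos}(G))}$ the Bernstein--Walsh inequality gives $|p(z)|\le|\Phi(z)|^n|p(\zeta^\ast)|\le r(n)^n|p(\zeta^\ast)|$, whence $K_n^\mu(z,z)\le r(n)^{2n}|p(\zeta^\ast)|^2/\|p\|_{2,\mu}^2\le r(n)^{2n}K_n^\mu(\zeta^\ast,\zeta^\ast)\le r(n)^{2n}\max_{\supp\mu}K_n^\mu$; and $r(n)^{2n}=(1+\tfrac1{n+1})^n\le e$ yields $\tfrac1e\max_{G_{r(n)}}K_n^\mu\le\max_{\supp\mu}K_n^\mu$. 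The last inequality then reduces to bounding $\max_{\supp\mu}K_n^\mu$ by $c_1/\dist(\Gamma,\partial G_{r(n)})^2$: part (a) controls $K_n^\mu$ on the part of $G$ at distance $\gtrsim1/n$ from $\Gamma$, while on the thin collar adjacent to $\Gamma$ one invokes the known pointwise Bergman-kernel bounds of \cite{Gabook87,Suetin1,BB-NS} --- equivalently a Nikolskii-type estimate $\|p\|_{L^\infty(\mbox{Clos}(G))}\le c\,(n+1)\,\|p\|_{2,\mu}$ for $\deg p\le n$ --- together with the conformal estimate $\dist(\Gamma,\partial G_{r(n)})\asymp1/n$, a consequence of $r(n)-1\asymp1/n$ and the boundary regularity of $\Phi$.

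The principal obstacle is that (c) and (d) stand or fall with the strong asymptotics for $p_n^\mu$ carrying a genuinely uniform $\mathcal O(1/n)$ remainder right up to (but not onto) $\supp(\mu)$; the real work is to confirm that the hypotheses on $\Gamma$ are exactly those under which \cite{BB-NS} (or \cite{Gabook87,Suetin1}) provides this. A secondary difficulty is the collar estimate in (b): part (a) degenerates as $z\to\Gamma$, so one genuinely needs the Nikolskii/trace-type inequality and the conformal distance estimate, with some extra care at corners of $\Gamma$, where $|\Phi'|$ is unbounded (convex corners push the level curve $\partial G_{r(n)}$ outward, reflex corners inward).
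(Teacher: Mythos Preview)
Your treatment of (a), (c), (d), and the first two inequalities in (b) coincides with the paper's: (a) via the sub-mean-value inequality, (c)--(d) via the strong asymptotics $p_n^\mu(z)=\sqrt{(n+1)/\pi}\,\Phi'(z)\Phi(z)^n(1+\mathcal O(1/n))$ from \cite{BB-NS}, and the first two inequalities of (b) via the Bernstein--Walsh/maximum-principle argument.

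The gap is the third inequality of (b). Both ingredients you propose --- a Nikolskii bound $\|p\|_{L^\infty(\supp(\mu))}\le c(n+1)\|p\|_{2,\mu}$ and $\dist(\Gamma,\partial G_{r(n)})\asymp 1/n$ --- fail under the stated hypotheses: if $\Gamma$ has a corner of outer angle $\alpha\pi$ with $\alpha>1$ (allowed provided $\alpha<2$), then $\psi=\Phi^{-1}$ behaves like a power $\alpha$ near the corresponding boundary point, so $\dist(\Gamma,\partial G_{r(n)})\asymp n^{-\alpha}$ and $\max_{\supp(\mu)}K_n^\mu\asymp n^{2\alpha}$; this is precisely the content of Remark~\ref{rem_Bergman}(b). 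You might hope the two errors compensate, but you do not show it, and invoking a Nikolskii inequality is circular since its optimal constant squared \emph{is} $\max_{\supp(\mu)}K_n^\mu$, the very quantity you need to bound. The paper avoids the collar entirely: using that the maximum over $G_{r(n)}$ is attained on $\partial G_{r(n)}$, it bounds $K_n^\mu$ directly there via the Grunsky/Faber machinery of \cite{BB-NS}. With $w=\Phi(z)$ and the row vectors $P=(\psi'(w)p_j^\mu(z))_j$, $Q=(\sqrt{(j+1)/\pi}\,w^j)_j$, $F=(\psi'(w)F_{j+1}'(z)/\sqrt{\pi(j+1)})_j$, one has $F=PR_n$ with $\|R_n^{-1}\|\le(1-\|C\|^2)^{-1/2}$ ($C$ the Grunsky matrix) and $\|F-Q\|^2\le(\pi(|w|^2-1)^2)^{-1}$. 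On $\partial G_{r(n)}$ one has $|w|^2-1=1/(n+1)$, yielding $K_n^\mu(z,z)\le\tfrac{2.5}{1-\|C\|^2}\,|\Phi'(z)|^2/(|\Phi(z)|^2-1)^2$; the distortion bound $|\Phi'(z)|/(|\Phi(z)|-1)\le 2/\dist(z,\Gamma)$ from \cite{Trefethen} then converts this into $c_1/\dist(\Gamma,\partial G_{r(n)})^2$. The corner exponent is thus absorbed automatically into $\dist(z,\Gamma)$, with no separate Nikolskii step needed.
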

\begin{proof}
   Part (a) is classical (being valid for any domain $G$); it follows, e.g., from \cite[Lemma 1]{Gabook87}. For a proof of part (b), we claim that the first two inequalities are simple consequences of the maximum principle and \eqref{def_kernel3}. Indeed, for any $r \geq 1$,
   $$
      \max_{z\in \supp(\mu)} K_n^\mu(z,z)\leq
      \max_{z\in G_r} K_n^\mu(z) \leq \max_{z\in G_r} \max_{\deg P \leq n} \frac{|P(z)|^2}{\| P \|_{2,\mu}}.
   $$
   Using the maximum principle for $P(z)$ on $G_r$ we find the upper bound
   $$
      \max_{z\in G_r} K_n^\mu(z) \leq
      \max_{\deg P \leq n} \max_{z\in \partial G_r} \frac{|P(z)|^2}{\| P \|_{2,\mu}} \leq \max_{z\in \partial G_r} K_n^\mu(z,z) ,
   $$
   and thus the maximum of $K_n^\mu(z,z)$ is attained at the boundary. Similarly, from the maximum principle for $P(z)/\Phi(z)^n$ on $\mathbb G \setminus \supp(\mu)$ we infer that
   $$
       \max_{z\in \partial G_{r(n)}} K_n^\mu(z,z)
       \leq r(n)^{2n} \max_{\deg P \leq n} \max_{z\in \supp(\mu)} \frac{|P(z)|^2}{\| P \|_{2,\mu}} \leq e \, \max_{z\in \supp(\mu)} K_n^\mu(z,z),
   $$
   showing that the first two inequalities of part (b) are true.
   In order to show the last inequality, we closely follow \cite{BB-NS}, and consider the row vectors
   \begin{eqnarray*} &&
      P:= (\psi'(w) p_j^\mu(z))_{j=0,...,n}, \quad
      Q:= (\sqrt{\frac{j+1}{\pi}} w^j)_{j=0,...,n}, \quad
      \\&&
      F:= (\psi'(w)\frac{F_{j+1}'(z)}{\sqrt{\pi(j+1)}})_{j=0,...,n},
   \end{eqnarray*}
   where $w=\Phi(z)$ is of modulus larger than $1$, $z=\psi(w)$, and $F_n$ is the $n$th Faber polynomial associated to $G$.
   Notice that $\| P \|^2 = K_n^\mu(z,z)/|\Phi'(z)|^2$, whereas
   $$
       \| Q \|^2 = K_n^{\mu_{\mathbb D}}(w,w) \leq \frac{n+1}{\pi} \frac{|w|^{2n+2}}{|w|^2-1}
   $$ gives the $n$th Bergman Christoffel-Darboux kernel for the unit disk discussed in Example~\ref{exa_Bergman}. From \cite[Eqn. (1.5)]{BB-NS} we know that the $k$the component of $F-Q$ is given by the $k$th component of $(\sqrt{(j+1)/\pi}w^{-j-2})_{j=0,1,...}C$, with $C$ the infinite Grunsky matrix having norm strictly less than $1$, and hence
   $$
         \forall |w|>1 : \quad \| F-Q \|^2 \leq \frac{1}{\pi (|w|^2-1)^2}.
   $$
   From \cite[Eqn. (2.1) and Corollary 2.3]{BB-NS} we know that
   there exists an upper triangular matrix $R_n$ with $\| R_n\|\leq 1$ and $\| R_n^{-1} \| \leq 1/\sqrt{1-\|C\|^2}$ such that
   $F=PR_n$. Thus
   $$
         \forall |w|>1: \quad \| F \|^2 \leq \| P \|^2 \leq \frac{\| F \|^2}{1-\| C \|^2}.
   $$
   Combining these findings gives for $z\in \partial G_{r(n)}$ and hence $|w|^2-1=1/(n+1)$
   \begin{eqnarray*}
       K_n^\mu(z,z) = | \Phi'(z)|^2 \| P \|^2 &\leq& | \Phi'(z)|^2 \frac{(\| Q \| + \| F-Q \|)^2}{1-\| C \|^2}
       \\&\leq& \frac{2.5}{1-\| C \|^2} \frac{| \Phi'(z)|^2}{(| \Phi(z)|^2-1)^2}.
   \end{eqnarray*}
   Recalling from \cite[Theorem 3.1]{Trefethen}
   that
   $$
       \frac{1/2}{\mbox{dist}(z,\Gamma)} \leq \frac{|\Phi'(z)|}{|\Phi(z)|-1}
       \leq \frac{2}{\mbox{dist}(z,\Gamma)} ,
   $$
   we have established the last inequality of part (b), with
   $c_1 \leq \tfrac{2.5}{1-\| C \|^2}$ depending only on the geometry of $G$.

   For a proof of parts (c) and (d), we recall from \cite[Eqn. (1.12)]{BB-NS} that $\varepsilon_n = \| C e_n \|^2=\mathcal O(1/n)$ by our assumptions on $\Gamma$, and hence by \cite[Theorem~1.1]{BB-NS}, uniformly for $z$ in a compact subset of $\mathbb C \setminus \supp(\mu)$,
   \begin{equation} \label{eq3_Bergman}
         p_n^\mu(z) = \Phi'(z) \sqrt{n+1}{\pi} \Phi(z)^n \left( 1 + \mathcal O\left(\frac{1}{n+1}\right)_{n \to \infty}\right) .
   \end{equation}
   Then part (c) follows by taking squares in \eqref{eq3_Bergman}, summing, and finally applying \eqref{exa2_Bergman} with $w=\Phi(z)$.
   Also, part (d) on ratio asymptotics is an immediate consequence of \eqref{eq3_Bergman}.
\end{proof}

\begin{rems}\label{rem_Bergman}
  {\bf (a)}
  We claim without proof that the estimates of the previous statement and its proof could be improved if we are willing to add additional smoothness assumptions on the boundary $\Gamma$. E.g., for a $\mathcal C^2$ boundary $\Gamma$, $\| F-Q \|/\| Q \|$ can be shown to tend to zero for $n\to \infty$ uniformly for $|\Phi(z)|^2-1\geq 1/(n+1)$, leading to lower and upper bounds for $K_n^\mu(z,z)$ that are sharp for $n \to \infty$ up to a constant.

  {\bf (b)}
  For general $\Gamma$ as in Theorem~\ref{thm_Bergman},
  the quantity $\gamma_n$ of Theorem~\ref{thm_Bergman}(b) can be shown to behave like $(n+1)^2$ in the case when $\Gamma$ has no corner of outer angle $>\pi$. Since this is true in particular for a $\mathcal C^2$ boundary, our Theorem~\ref{thm_Bergman}(b) is in accordance with a recent result of Totik \cite[Theorem~1.3]{To09}
  who showed under this additional assumption that the limit $$
      \lim_{n\to \infty} \frac{1}{n^2}\max_{z\in \partial \supp(\mu)} K_n^\mu(z,z)
      \quad \mbox{is finite and $\neq 0$.}
  $$
  In the case when there is a maximal outer angle $=\alpha \pi$ with $\alpha>1$, $\gamma_n$ can be shown to behave like $(n+1)^{2\alpha}$, which we believe to be also the rate of growth of $\max_{z\in \partial \supp(\mu)} K_n^\mu(z,z)$. .

  {\bf (c)} The $\mathcal{O}(1/n)$ error term in Theorem~\ref{thm_Bergman}(c) can be improved for smoother boundaries $\Gamma$, but not in Theorem~\ref{thm_Bergman}(d), see Example~\ref{exa_Bergman}.\qed
\end{rems}

In the next statement we improve a statement of Lasserre and Pauwels { \cite[Theorem~3.9 and Theorem~3.12]{LP2};} compare with Remark~\ref{rem_outliers1}. Here we allow for a finite number of fixed outliers in the case of one complex variable.

\begin{cor}\label{cor1_Bergman}
Let $\mu$ be as in Theorem~\ref{thm_Bergman}, and
$$
    \nu=\mu+\sigma, \quad \sigma=\sum_{j=1}^\ell t_j \delta_{z_j} , \quad t_j>0, \quad z_1,...,z_\ell \in \mathbb C \setminus \supp(\mu) \mbox{~distinct.}
$$
Then the Hausdorff distance between $\supp(\nu)$ and the level set $S_n:=\{ z \in \mathbb C : K_n^\nu(z,z)\leq \gamma_n \}$ with $\gamma_n$ as in Theorem~\ref{thm_Bergman}(b) tends to zero as $n\to \infty$.
\end{cor}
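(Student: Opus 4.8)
The plan is to establish the two inclusions which together say $d_H(\supp(\nu),S_n)<\delta$, namely that for every $\delta>0$ and all sufficiently large $n$: (i) $\supp(\nu)$ is contained in the $\delta$-neighbourhood of $S_n$, and (ii) $S_n$ is contained in the $\delta$-neighbourhood of $\supp(\nu)$. We use throughout that $\supp(\nu)=\supp(\mu)\cup\{z_1,\dots,z_\ell\}$; that $\mathcal N(\mu)=\{0\}$ and $\mathcal S(\mu)=\mathbb C$ by Examples~\ref{exa_supp}, since area measure on $G$ has non-empty interior; and that, $\Gamma$ being a Jordan curve, $\Omega:=\mathbb C\setminus\supp(\mu)$ is the unbounded component of $\mathbb C\setminus\supp(\mu)$, on which $g:=1/\Phi$ is analytic, zero-free and injective. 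Hence Theorem~\ref{thm_Bergman}(d) supplies the ratio asymptotics \eqref{thm_ratio0} for $\mu$ with $g=1/\Phi$, so Theorem~\ref{thm_ratio} and Corollary~\ref{cor_ratio} apply to $\mu$ and to our distinct points $z_1,\dots,z_\ell\in\Omega$ (the images $g(z_j)=1/\Phi(z_j)$ being distinct by injectivity). Finally we note that $\gamma_n\to\infty$ and, by Remark~\ref{rem_Bergman}(b), that $\gamma_n$ grows at most polynomially in $n$, so that $\gamma_n=o(\rho^n)$ for every $\rho>1$.

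For inclusion (i) it suffices to prove $\supp(\nu)\subset S_n$ for $n$ large. Since $\supp(\sigma)\subset\mathcal S(\mu)$, Example~\ref{ex_transfer}(b) gives $K_n^\nu(z,z)\le K_n^\mu(z,z)$ for all $z$, while Theorem~\ref{thm_Bergman}(b) gives $\max_{z\in\supp(\mu)}K_n^\mu(z,z)\le\gamma_n$; hence $\supp(\mu)\subset S_n$ for every $n$. At a point mass, \eqref{cor_ratio2} of Corollary~\ref{cor_ratio} together with $K_n^\mu(z_m,z_m)\to\infty$ (Theorem~\ref{thm_Bergman}(c), as $|\Phi(z_m)|>1$) yields $K_n^\nu(z_m,z_m)\to 1/t_m<\infty$, whereas $\gamma_n\to\infty$; so $z_m\in S_n$ once $n\ge N_0$ for some $N_0$ independent of $\delta$. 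Thus $\supp(\nu)\subset S_n$ for $n\ge N_0$.

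For inclusion (ii), fix $\delta>0$; we show that for $n$ large, $\dist(z,\supp(\nu))\ge\delta$ forces $K_n^\nu(z,z)>\gamma_n$, i.e. $z\notin S_n$. First we confine $S_n$: put $M':=\max\bigl(\max_{w\in\supp(\mu)}|w|,\max_j|z_j|\bigr)$; testing \eqref{def_kernel3} with $w\mapsto w^n$ and using $\|w^n\|_{2,\nu}^2\le\nu(\mathbb C)(M')^{2n}$ gives $K_n^\nu(z,z)\ge\nu(\mathbb C)^{-1}(|z|/M')^{2n}\ge\nu(\mathbb C)^{-1}4^{n}$ whenever $|z|\ge 2M'$; since $\gamma_n=o(4^{n})$, this exceeds $\gamma_n$ for $n$ large, so $S_n\subset\{|z|\le 2M'\}$ eventually. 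It remains to handle the compact set $K_\delta:=\{z:\dist(z,\supp(\nu))\ge\delta\}\cap\{|z|\le 2M'\}$, which lies in $\Omega$ and is bounded away from each $z_j$. On $K_\delta$ Corollary~\ref{cor_ratio}\eqref{cor_ratio1} gives $K_n^\nu(z,z)/K_n^\mu(z,z)\to|B(z)|^2$ uniformly, where $B(z)=\prod_{j=1}^\ell\frac{g(z)-g(z_j)}{1-g(z)\overline{g(z_j)}}$ is continuous and, by injectivity of $g$, vanishes only at the $z_j$, so $c_\delta:=\min_{K_\delta}|B|^2>0$. Combining with Theorem~\ref{thm_Bergman}(c), $\min_{z\in K_\delta}|\Phi(z)|=:1+\eta_\delta>1$ and $\min_{z\in K_\delta}|\Phi'(z)|>0$, we obtain $K_n^\nu(z,z)\ge\tfrac{c_\delta}{2}K_n^\mu(z,z)\ge c_\delta'(1+\eta_\delta)^{2n}$ on $K_\delta$ for $n$ large, which again beats the sub-exponential $\gamma_n$. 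Hence for $n\ge N_1(\delta)$ every $z$ with $\dist(z,\supp(\nu))\ge\delta$ lies outside $S_n$, i.e. $S_n\subset\{z:\dist(z,\supp(\nu))<\delta\}$.

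Combining the two inclusions, $d_H(\supp(\nu),S_n)<\delta$ for $n\ge\max(N_0,N_1(\delta))$; as $\delta>0$ was arbitrary, $d_H(\supp(\nu),S_n)\to 0$. The only genuinely delicate point is that $\{z:\dist(z,\supp(\nu))\ge\delta\}$ is not compact, whereas the ratio asymptotics of Corollary~\ref{cor_ratio} are uniform only on compact subsets of $\Omega$; this is why one must first trap $S_n$ inside a fixed disk via the crude monomial lower bound, a step that in turn hinges on $\gamma_n$ being sub-exponential (Remark~\ref{rem_Bergman}(b)).
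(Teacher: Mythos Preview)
Your proof is correct and follows the same strategy as the paper: establish $\supp(\nu)\subset S_n$ for large $n$ via monotonicity and Corollary~\ref{cor_ratio}, then show $S_n$ lies in any $\delta$-neighbourhood of $\supp(\nu)$ by combining the ratio asymptotics \eqref{cor_ratio1} with the exponential growth of $K_n^\mu$ against the merely polynomial $\gamma_n$. Your explicit confinement of $S_n$ to a fixed disk before invoking Corollary~\ref{cor_ratio} is in fact more careful than the paper's version, which asserts uniform convergence of $K_n^\nu/K_n^\mu$ on the unbounded complement of a neighbourhood of $\supp(\nu)$ even though Corollary~\ref{cor_ratio} is stated only for compact subsets of $\Omega$.
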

\begin{proof}
   We first recall from Theorem~\ref{thm_alternation} that $K_n^\nu(z,z)\leq K_n^\mu(z,z)$, and hence $\supp(\mu)\subset S_n$ by Theorem~\ref{thm_Bergman}(b). Also, setting $\nu_j := \nu-t_j \delta_{z_j}$, we know from Theorem~\ref{thm_alternation} that
   $$
        K_n^\nu(z_j,z_j) =
        \frac{1/t_j}{1+\frac{1}{t_j K_n^{\nu_j}(z_j,z_j)}}.
   $$
   The quantity $K_n^{\nu_j}(z_j,z_j)/K_n^\mu(z_j,z_j)$ has a non-zero limit according to Corollary~\ref{cor_ratio} and Theorem~\ref{thm_Bergman}(d). Also,  Theorem~\ref{thm_Bergman}(c) together with Remark~\ref{rem_Bergman}(b) imply that $K_n^\mu(z_j,z_j)/\gamma_n$ grows geometrically large. Hence $K_n^\nu(z_j,z_j) \to 1/t_j$ with a geometric rate, implying that $\supp(\nu) \subset S_n$ for sufficiently large $n$.

   For $z$ outside of a neighborhood $U$ of $\supp(\nu)$ we write
   $$
       \frac{K_n^{\nu}(z,z)}{\gamma_n} = \frac{K_n^{\nu}(z,z)}{K_n^{\mu}(z,z)} \frac{K_n^{\mu}(z,z)}{\gamma_n}.
   $$ The first factor on the right-hand side has a non-zero limit according to Corollary~\ref{cor_ratio} and Theorem~\ref{thm_Bergman}(d) uniformly for $z\not\in U$. Also, again by Theorem~\ref{thm_Bergman}(c) and Remark~\ref{rem_Bergman}(b), $K_n^\mu(z,z)/\gamma_n$ grows at least as a constant times $|\Phi(z)|^{2n}/n^{\beta}$ uniformly for $z\not\in U$ for some constant $\beta>0$, and we conclude that $S_n \subset U$ for sufficiently large $n$, showing the convergence of the Hausdorff distance.
\end{proof}

 We are now prepared to describe a situation where the efficiency of our leverage score for detecting outliers can be tested.

\begin{cor}\label{cor2_Bergman}
Let $\mu,\nu,\sigma$ be as in Corollary~\ref{cor1_Bergman}, and $n$ be sufficiently large. Furthermore, consider the discrete measures
$$
    \widetilde \nu = \widetilde \mu + \sigma, \quad
    \widetilde \mu = \sum_{j=\ell+1
    }^N t_j \delta_{z_j} , \quad t_j>0, \quad z_{\ell+1},...,z_N \in \supp(\mu) \mbox{~distinct,}
$$
where we assume that $\widetilde \mu$ is sufficiently close to $\mu$ such that
$\| M_n(\widetilde \mu,\mu) - I \| \leq 1/2$. Then there exist $c>0$ and $q\in (0,1)$ depending only on $\mu,\sigma$ but not on $\widetilde \nu$ such that, for the outliers,
$$
   j=1,2,...,\ell: \quad
   1 - t_j K_n^{\widetilde \nu}(z_j,z_j) \leq c q^n ,
$$
whereas for the other mass points of $\widetilde \nu$
$$
   j = \ell+1,...,N : \quad
   t_j K_n^{\widetilde \nu}(z_j,z_j) \leq
   \frac{3}{2} \min \Bigl\{ t_j \gamma_n , \frac{t_j}{\pi \mbox{$\dist$}(z_j,\Gamma)^2} \Bigr\} ,
$$
where $\gamma_n$ is as in Theorem~\ref{thm_Bergman}(b).
\end{cor}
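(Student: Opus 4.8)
The plan is to read both estimates off the variational description \eqref{def_kernel3_bis}, \eqref{def_kernel3_ter} of the Christoffel function: at the bulk points $z_{\ell+1},\dots,z_N\in\supp(\mu)$ one combines the small-perturbation comparison of \S\ref{section_small_perturbations} with the sharp Bergman bounds, while at the outliers $z_1,\dots,z_\ell\notin\supp(\mu)$ one plugs in an explicit near-extremal polynomial. I begin with the standing facts. Since $\mu$ is area measure on the open set $G$ we have $\mathcal N(\mu)=\{0\}$ and $\mathcal S(\mu)=\C$. By the computation \eqref{eq.Rayleigh}, the hypothesis $\|M_n(\widetilde\mu,\mu)-I\|\le\tfrac12$ is equivalent to the proximity condition \eqref{eq.close_measures} for $(\mu,\widetilde\mu)$ with $\epsilon:=\|M_n(\widetilde\mu,\mu)-I\|\le\tfrac12$; in particular $M_n(\widetilde\mu,\mu)$ is positive definite, so the polynomials of degree $\le n$ are linearly independent in $L^2(\widetilde\mu)$, whence $\mathcal N_n(\widetilde\mu)=\{0\}$, $k_n^{\widetilde\mu}=n$, and $\supp(\widetilde\mu)$ has at least $n+1$ points. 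As $\supp(\widetilde\mu)\subset\supp(\widetilde\nu)$, the same shows $\mathcal N_n(\widetilde\nu)=\{0\}$, $k_n^{\widetilde\nu}=n$, and $N\ge n+2$. Consequently, for each of $\mu,\widetilde\mu,\widetilde\nu$ the space $\mathcal L_n(\cdot)$ is the full space of polynomials of degree $\le n$, so \eqref{def_kernel3_bis} expresses $1/K_n(z,z)$ as the minimum of $\|p\|^2$ over all polynomials $p$ of degree $\le n$ with $p(z)=1$, and Proposition~\ref{prop.2measures} and Lemma~\ref{lem_leverage} apply; finally $\widetilde\mu(\C^d)=1-\sum_{j=1}^\ell t_j\le1$ because $\widetilde\nu$ is a probability measure.

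For a bulk point $z_j$ ($\ell<j\le N$, hence $z_j\in\supp(\mu)$) I would argue: in the minimum defining $1/K_n^{\widetilde\nu}(z_j,z_j)$ every competitor $p$ satisfies $\|p\|_{2,\widetilde\nu}^2=\|p\|_{2,\widetilde\mu}^2+\sum_{k=1}^\ell t_k|p(z_k)|^2\ge\|p\|_{2,\widetilde\mu}^2$, so $K_n^{\widetilde\nu}(z_j,z_j)\le K_n^{\widetilde\mu}(z_j,z_j)$; Proposition~\ref{prop.2measures} applied to $(\mu,\widetilde\mu)$ gives $K_n^{\widetilde\mu}(z_j,z_j)\le(1-\epsilon)^{-1}K_n^\mu(z_j,z_j)$; and Theorem~\ref{thm_Bergman}(a),(b) bounds $K_n^\mu(z_j,z_j)$ by $\min\bigl\{\gamma_n,(\pi\dist(z_j,\Gamma)^2)^{-1}\bigr\}$. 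Multiplying by $t_j$ gives the second assertion, the multiplicative constant being $(1-\epsilon)^{-1}$ (so the proximity hypothesis should be read with $\tfrac13$ in place of $\tfrac12$ for the stated factor $\tfrac32$).

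For an outlier $z_j$ ($1\le j\le\ell$, hence $z_j\notin\supp(\mu)$ and $|\Phi(z_j)|>1$) I would test the minimum defining $1/K_n^{\widetilde\nu}(z_j,z_j)$ against
\[
p(z):=\frac{K_m^\mu(z,z_j)}{K_m^\mu(z_j,z_j)}\prod_{\substack{1\le k\le\ell\\ k\ne j}}\frac{z-z_k}{z_j-z_k},\qquad m:=n-\ell+1,
\]
which has degree $\le n$, equals $1$ at $z_j$, and vanishes at $z_k$ for $k\ne j$, $k\le\ell$. Then $\|p\|_{2,\widetilde\nu}^2=\|p\|_{2,\widetilde\mu}^2+t_j\le\widetilde\mu(\C^d)\|p\|_{L^\infty(\supp\mu)}^2+t_j\le\|p\|_{L^\infty(\supp\mu)}^2+t_j$. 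By Cauchy--Schwarz $|K_m^\mu(z,z_j)|\le\sqrt{K_m^\mu(z,z)}\sqrt{K_m^\mu(z_j,z_j)}$, so combining Theorem~\ref{thm_Bergman}(b) (which makes $\sup_{z\in\supp\mu}K_m^\mu(z,z)\le\gamma_m$, of polynomial growth in $m$ by Remark~\ref{rem_Bergman}(b)) with Theorem~\ref{thm_Bergman}(c) (which makes $K_m^\mu(z_j,z_j)$ grow like $\text{const}\cdot(m+1)|\Phi(z_j)|^{2m}$) one gets $\|K_m^\mu(\cdot,z_j)/K_m^\mu(z_j,z_j)\|_{L^\infty(\supp\mu)}\le\sqrt{\gamma_m/K_m^\mu(z_j,z_j)}\le\widetilde c_jq_j^{\,n}$ for suitable $\widetilde c_j>0$ and $q_j\in(0,1)$ depending only on $\mu$ and $\sigma$, and absorbing the bounded product factor, $\|p\|_{L^\infty(\supp\mu)}\le\widetilde c_jq_j^{\,n}$ after relabelling $\widetilde c_j$. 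Hence $1/K_n^{\widetilde\nu}(z_j,z_j)\le t_j+\widetilde c_j^{\,2}q_j^{\,2n}$, so $t_jK_n^{\widetilde\nu}(z_j,z_j)\ge(1+\widetilde c_j^{\,2}t_j^{-1}q_j^{\,2n})^{-1}\ge1-\widetilde c_j^{\,2}t_j^{-1}q_j^{\,2n}$; taking $c:=\max_{1\le j\le\ell}\widetilde c_j^{\,2}/t_j$ and $q:=\max_{1\le j\le\ell}q_j^{\,2}\in(0,1)$ gives the first assertion for $n$ large (for small $n$ it is trivial, since $t_jK_n^{\widetilde\nu}(z_j,z_j)\in(0,1]$ by Lemma~\ref{lem_leverage}).

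The main obstacle is the outlier estimate, and inside it the construction and the \emph{uniform} control of the above test polynomial: one needs a polynomial of degree $\le n$ normalised at $z_j$, annihilating the other outliers, and exponentially (in $n$) small on all of $\supp(\mu)$ with a constant independent of the unknown bulk measure $\widetilde\mu$. The degree accounting — reserving $\ell-1$ degrees for the vanishing factor and the remaining $m=n-\ell+1$ for the Bergman reproducing kernel — has to be handled with care, and the whole estimate rests on the contrast between the merely polynomial growth of $\gamma_m=\sup_{\supp\mu}K_m^\mu$ (Theorem~\ref{thm_Bergman}(b)) and the exponential growth $|\Phi(z_j)|^{2m}$ of $K_m^\mu(z_j,z_j)$ off the support (Theorem~\ref{thm_Bergman}(c)) — precisely the mechanism that separates outliers from the bulk. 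The remaining ingredients — reducing the hypothesis to \eqref{eq.close_measures}, the monotonicity $K_n^{\widetilde\nu}\le K_n^{\widetilde\mu}$, and the Bergman bounds on the support — are routine.
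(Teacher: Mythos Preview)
Your bulk-point argument coincides with the paper's: monotonicity $K_n^{\widetilde\nu}\le K_n^{\widetilde\mu}$ (Example~\ref{ex_transfer}(b)), then Proposition~\ref{prop.2measures}, then Theorem~\ref{thm_Bergman}(a),(b). Your remark on the constant is justified: with $\epsilon=\tfrac12$ Proposition~\ref{prop.2measures} yields only $K_n^{\widetilde\mu}\le(1-\epsilon)^{-1}K_n^\mu=2K_n^\mu$, so the factor $\tfrac32$ in the statement really matches $\epsilon=\tfrac13$.

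For the outliers you take a genuinely different route. The paper writes $\widetilde\nu=\widetilde\nu_j+t_j\delta_{z_j}$, uses the single-mass formula (the Example following Theorem~\ref{thm_alternation}) to obtain
\[
1-t_jK_n^{\widetilde\nu}(z_j,z_j)=\bigl(1+t_jK_n^{\widetilde\nu_j}(z_j,z_j)\bigr)^{-1},
\]
transfers $K_n^{\widetilde\nu_j}(z_j,z_j)\ge \tfrac12 K_n^{\nu_j}(z_j,z_j)$ via Remark~\ref{eq.Rayleigh_mass}, and then invokes Corollary~\ref{cor_ratio} (hence the ratio asymptotics of Theorem~\ref{thm_ratio} and Theorem~\ref{thm_Bergman}(d)) to see that $K_n^{\nu_j}(z_j,z_j)/K_n^\mu(z_j,z_j)$ has a nonzero limit. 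Your explicit test polynomial $p(z)=K_m^\mu(z,z_j)K_m^\mu(z_j,z_j)^{-1}\prod_{k\ne j}(z-z_k)/(z_j-z_k)$ bypasses all of this: it needs only Cauchy--Schwarz, the polynomial growth of $\gamma_m$ (Theorem~\ref{thm_Bergman}(b) and Remark~\ref{rem_Bergman}(b)), and the exponential growth of $K_m^\mu(z_j,z_j)$ off the support. This is more elementary and avoids ratio asymptotics entirely; the paper's route, by contrast, showcases the perturbation machinery of \S\S\ref{section_additive}--\ref{section_ratio_asymptotics} that the article is designed to advertise.

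One small correction: nothing in the hypotheses makes $\widetilde\nu$ a probability measure, so your claim $\widetilde\mu(\C)\le1$ is unjustified. It is harmless, though: comparing the $(0,0)$ entries in $\|M_n(\widetilde\mu,\mu)-I\|\le\tfrac12$ gives $\widetilde\mu(\C)\le\tfrac32\,\mu(\C)$, a bound depending only on $\mu$, and your estimate goes through with this constant absorbed into $c$. (The bound $t_jK_n^{\widetilde\nu}(z_j,z_j)\le1$ you cite from Lemma~\ref{lem_leverage} likewise holds without the probability assumption, directly from \eqref{def_kernel3_ter}.)
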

\begin{proof}
   We first consider the case $j\in \{ 1,...,\ell\}$ of outliers. Let $\nu_j=\nu - t_j \delta_{z_j}$, $\widetilde \nu_j=\widetilde \nu - t_j \delta_{z_j}$. According to Remark~\ref{eq.Rayleigh_mass} with $\epsilon=1/2$ we have $K_n^{\widetilde \nu_j}(z_j,z_j) \geq K_n^{\nu_j}(z_j,z_j)/2$, and hence
   \begin{eqnarray*}
      1 - t_j K_n^{\widetilde \nu}(z_j,z_j) &=& \frac{1}{1+t_j K_n^{\widetilde \nu_j}(z_j,z_j)}
      \leq \frac{2}{t_j K_n^{\nu_j}(z_j,z_j)}
      \\&=& \frac{2}{t_j}\frac{K_n^{\mu}(z_j,z_j)}{ K_n^{\nu_j}(z_j,z_j)} \frac{1}{K_n^{\mu}(z_j,z_j)},
   \end{eqnarray*}
   and we conclude as in the previous proof using Theorem~\ref{thm_Bergman}(c),(d) and Corollary~\ref{cor_ratio} that our first assertion holds.

   Now we consider the case $j\in \{ \ell+1,...,N \}$ of mass points $z_j \in \supp(\mu)$. Applying first Theorem~\ref{thm_alternation} and then Proposition~\ref{prop.2measures} with $\epsilon=1/2$, we arrive at
   \begin{eqnarray*} &&
       t_j K_n^{\widetilde \nu}(z_j,z_j)
       \leq
       t_j K_n^{\widetilde \mu}(z_j,z_j)
       \leq
       \frac{3}{2} t_j K_n^{\mu}(z_j,z_j).
   \end{eqnarray*}
   Thus our second assertion is a consequence of Theorem~\ref{thm_Bergman}(a),(b).
\end{proof}

By comparing the upper left entry,
our assumption $\| M_n(\widetilde \mu,\mu) - I \| \leq 1/2$ also implies that
$$
     \frac{\widetilde \mu(\mathbb C)}{\mu(\mathbb C)} =
     \frac{1}{\mu(\mathbb C)} \sum_{j=\ell+1}^N t_j \in [1/2,3/2],
$$
and thus a typical weight of $\widetilde \mu$ is of order $1/N$.  Hence, roughly speaking, our leverage score $t_j K_n^{\widetilde \nu}(z_j,z_j)$ is very close to $1$ for the case $j \in \{ 1,...,\ell\}$ of outliers, and at least bounded for $z_j$ lying in the interior of $\supp(\mu)$, and more precisely, having a weight of typical size and $z_j$ being at least of distance $1/\sqrt{N}$ to the boundary $\Gamma$. Moreover, for equal weights $t_{\ell+1}=\cdots=t_N$ (and thus of order $1/N$), as long as
\begin{equation} \label{Bergman_assumption}
   \mbox{$\ell$ is fixed, $n$ is sufficiently large, and $N \geq \gamma_n$}
\end{equation}
Corollary~\ref{cor2_Bergman} tells us that we are able to identify correctly the outliers as those elements $z_j$ of the support where our leverage score $t_j K_n^{\widetilde \nu}(z_j,z_j)$ is close to $1$.

In order to illustrate this claim, we conclude this section by reporting some numerical simulations. Here we discuss for different clouds of distinct points $z_1,...,z_N\in \mathbb C$, and equal weights $t_1=\cdots=t_N=1/N$, with the corresponding discrete measure $\nu$. The points $z_j$ of the clouds are drawn following a color code given by our leverage score $t_j K_n^\mu(z_j,z_j)$, with red corresponding to values close to $0$, and blue values close to $1$. For each cloud, we draw in the upper row (without theoretical justification) the level scores for bivariate orthogonal polynomials with lexicographical ordering for total degrees $1,4,8$ (and hence $n=2,14,44$) and in the lower row for univariate Bergman orthogonal polynomials for $n=1,9,44$. The first column essentially corresponds to the classical leverage score known from data analysis and mentioned in Section~\ref{sec_leverage}.

The vector of values of the leverage score has been computed in finite precision arithmetic as follows: in the univariate case, we first compute by the full Arnoldi method in complexity $\mathcal O(n^2N)$ the Hessenberg matrix allowing to represent $zv_n^{\widetilde \nu}(z)$ in terms of $v_{n+1}^{\widetilde \nu}(z)$, and then use a link between values of the Christoffel-Darboux kernel and GMRES for the shifted Hessenberg matrix, with a total complexity of $\mathcal O(n^3N)$. A generalization of this approach has been used for bivariate orthogonal polynomials. There exist more efficient approaches, but many of them suffer from loss of orthogonality. For each of our simulations we give an indicator showing that our approach does not have this drawback. Also, and this is probably {\it the most important message for large data sets, the complexity and memory requirements scale linearly with $N$}.

\begin{figure}
   \centerline{\includegraphics[scale=0.6]{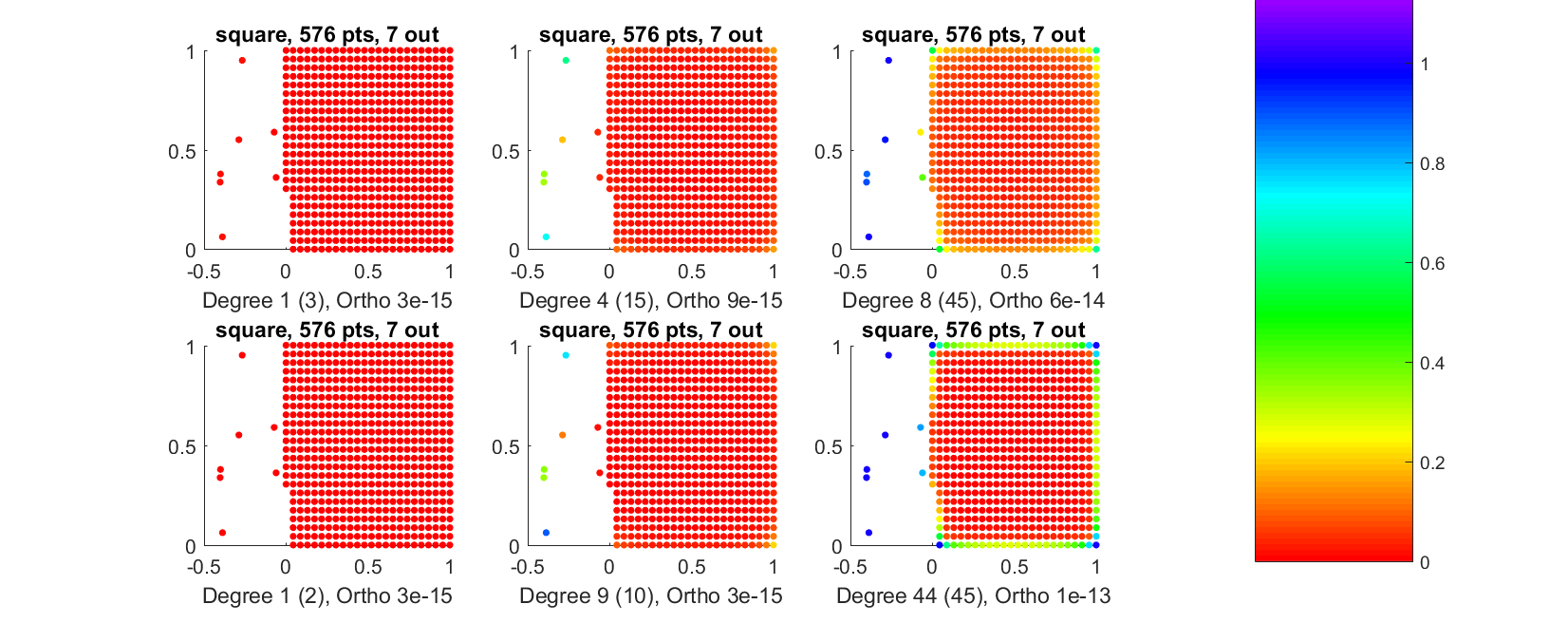}}
   \caption{A cloud of $N=576$ points, with $7$ random outliers, the others obtained by discretizing normalized Lebesgue measure on the unit square by a regular grid.}\label{fig71}
\end{figure}

\begin{figure}
   \centerline{\includegraphics[scale=0.6]{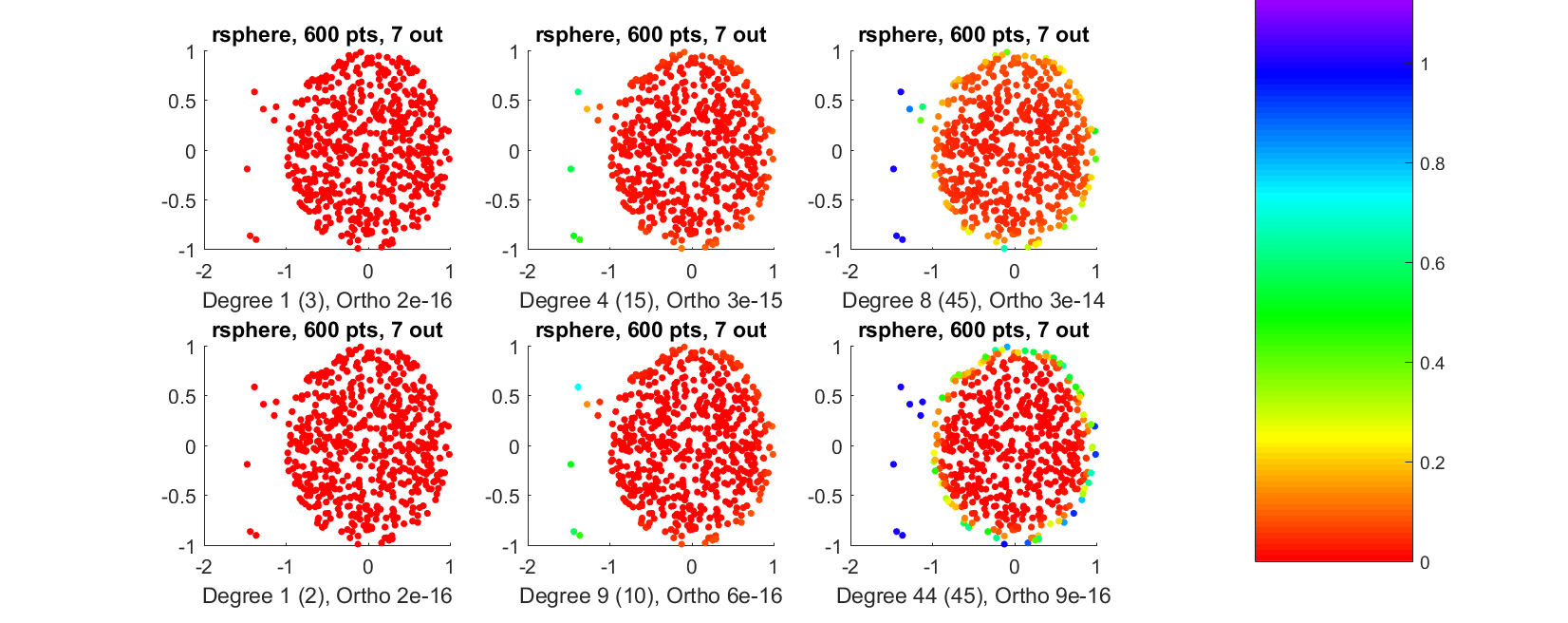}}
   \caption{A cloud of $N=600$ points, with $\ell=7$ random outliers. The other points are random samplings of the unit disk.}\label{fig72}
\end{figure}

\begin{figure}
   \centerline{\includegraphics[scale=0.6]{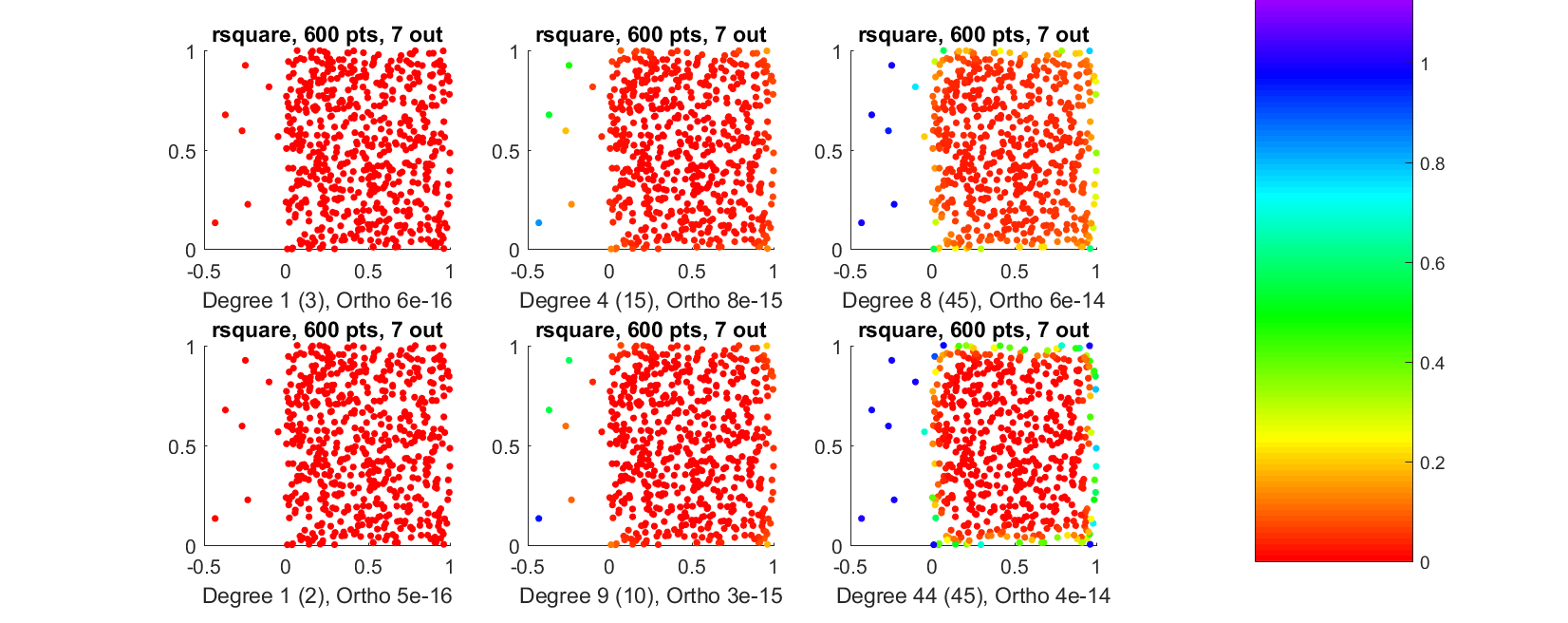}}
   \centerline{\includegraphics[scale=0.6]{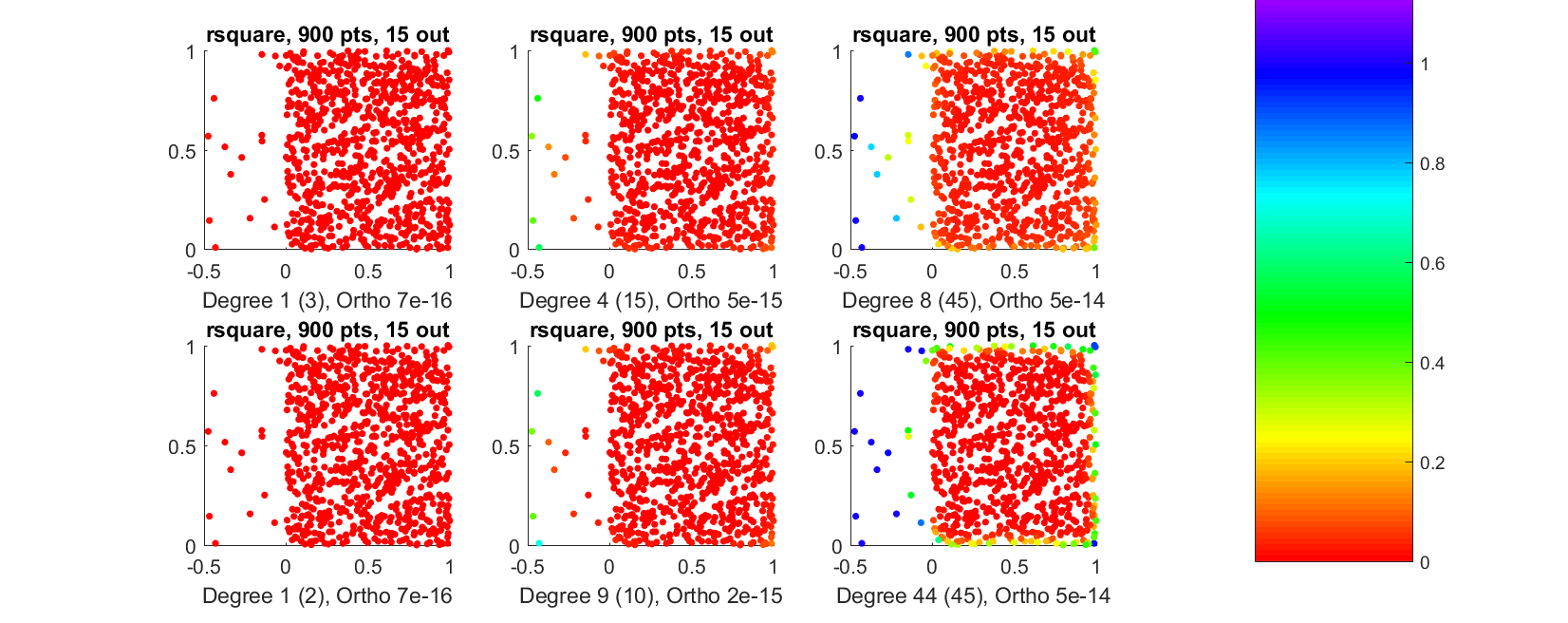}}
   \caption{A first cloud of $N=600$ points, with $\ell=7$ random outliers, and a second one with $N=900$ and $15$ outliers. The other points are random samplings.}\label{fig73}
\end{figure}

In the numerical experiments displayed in Figures~\ref{fig71}, \ref{fig72}, and \ref{fig73}, we observe that the classical leverage scores in the first column do not detect any of our outliers, probably due to the small weights $t_j=1/N$. In the experiences of the other two columns, we clearly detect outliers, but the color separation is more pronounced for one complex variable, in particular for outliers which are closer to $G$. In the right column for $n=44$, the color code for the outliers is best, but in fact also corners of the domain and some other points of the support of $\widetilde \nu$ have an outlier color code. This seems to be partly a consequence of the sampling procedure, but clearly also a consequence of the fact that we do not respect \eqref{Bergman_assumption} since $N$ is not large enough.

Similar sharp estimates are known for measures supported on Jordan curves, or arcs in the complex plane, but we do not detail them here.

\clearpage
\section{Examples in the multivariate case}\label{section_examples_multivariate}
The present section collects a series of mostly known facts of pluripotential theory which are relevant for our study. In \S~\ref{subsection_Green} we recall several well-known formulas for the plurisubharmonic Green function $g_\Omega(z)$. In \S~\ref{sec_tensor} we consider the particular case of tensor measures on the unit square allowing to get far more precise results on asymptotics. We also include a simple numerical experiment endorsing the thesis that working in the complex domain has clear benefits even for problems formulated
solely in real variables. Finally, in \S~\ref{sec_Reinhardt} we discuss two other cases of several complex variables where explicit formulas for the Christoffel-Darboux kernel are available.

\subsection{Examples of plurisubharmonic Green functions with pole at infinity}\label{subsection_Green}

 With respect to and support of Lemma \ref{lem_growth}, we are fortunate to have closed form expressions for the plurisubharmonic Green function $g_\Omega(z)$
with pole at infinity for a few classes of compact subsets

   $S=\mathbb C\setminus \Omega$ of $\C^d$. For all these examples, $S$ is non-pluripolar, $L$-regular and polynomially convex, and hence the explicit formulas for $g_\Omega(z)$ are obtained via computing Siciak's extremal function, see \cite{Klimek}.
\color{black}
Notice also the affine invariance not only of the Christoffel-Darboux kernel \eqref{affine_invariance}, but also of the plurisubharmonic Green function \cite[\S 5.3]{Klimek}, and thus a property being for instance valid for the real unit ball also translates to any scaled and shifted real ball. While most of results were originally stated for the total degree, and the underlying lexicographical order, a new trend of considering
degrees defined by the Minkowski functional of a convex subset of the multi-indices is becoming popular \cite{Bayraktar,BosLevenberg_new}.

\subsubsection{Complex ball}

 Consider for instance a complex norm $[ \cdot ]$ in $\C^d$ and the closed ball
$$ B_a(r) = \{ z \in \C^d; \ [z-a] \leq r\}.$$
By a complex norm we mean a norm with the homogeneity property
$$ [\lambda z] = |\lambda| [z], \ \ \lambda \in \C, \ z \in \C^d.$$
Let $\Omega$ denote the complement of $B_a(r)$ in $\C^d$. Then the set $B_a(r)$ turns out to be regular and
\begin{equation}\label{complex-ball}
g_\Omega(z) = \log^+ \frac{[z-a]}{r}.
\end{equation}
For a proof, see Example 5.1.1 in \cite{Klimek}. In particular, for the unit ball $\B^d$ with respect to the standard Hermitian norm $\| \cdot \|$ we obtain
$$ g_{\C^d \setminus \B^d}(z) = \log^+ \| z \|.$$

\subsubsection{Polynomial polyhedra} Let $p_1, p_2, \ldots, p_d$ be complex polynomials in $d$ variables, with highest homogeneous parts $\hat{p}_1, \hat{p}_2, \ldots, \hat{p}_d$
respectively. Assume that
$$ \sum_{j=1}^d |\hat{p}_j (z)| = 0 \  \ {\rm iff} \ \ z =0;$$
that is, the map $(p_1,p_2,\ldots,p_d) : \C^d \longrightarrow \C^d$ has finite fibres. Consider the analytic polyhedron
$$ K = \{ z \in \C^d; \ |p_j(z)| \leq 1, \ \ 1 \leq j \leq d\},$$
and its complement $\Omega = \C^d \setminus K$. Then (see Corollary 5.3.2 in \cite{Klimek})
\begin{equation}
g_\Omega(z) = \max_{1 \leq j \leq d} \frac{ \log^+ |p_j(z)|}{\deg p_j}.
\end{equation}
 In particular, for the polydisk
$$ \D^d = \{ z \in \C^d; \  |z_j| \leq 1, \  1 \leq j \leq d\}$$
we get
\begin{equation}
g_{\C^d \setminus \D^d}(z) = \max_{1 \leq j \leq d} \log^+ |z_j|.
\end{equation}

\subsubsection{Product domains} A theorem due to Siciak provides the naturality of the Green function on product sets.
More specifically, let $K \subset \C^d$ and $L \subset \C^e$ be compact subsets. Then
\begin{equation}\label{product}
g_{\C^{d+e} \setminus (K \times L)}(z,w) = \max (g_{\C^d \setminus K}(z), g_{\C^e \setminus L}(w)), \ \ z \in \C^d, \ w \in \C^e.
\end{equation}
For details, see Theorem 5.1.8 in \cite{Klimek}.

\subsubsection{Real subsets of complex space} Although a great deal of examples live in $\R^d$, it is necessary to treat them as subsets of $\C^d$.
Almost all closed form known formulas are obtained by pull-back from the Green function with pole at infinity of the real interval $[-1,1]$,
given by
$$ g_{\C \setminus [-1,1]}(z) = \log |z + \sqrt{z^2-1}|, \ z \in \C \setminus [-1,1],
$$
and $g_{\C \setminus [-1,1]}(z)=0$ in for $z\in [-1,1]$. Here the square root is chosen so that $g_{\C \setminus [-1,1]} \geq 0$.

Consider a compact subset $E \subset \R^d$ that is convex, has non-empty interior and is
symmetric with respect to the origin: $x \in E$ implies $-x \in E$. Following Lundin \cite{Lundin} one can represent $E$ as follows
$$ E = \{ z \in \C^d: \ \forall \omega \in S^{d-1}, a(\omega) \omega^\ast z \in [-1,1]\}, $$
where $S^{d-1}$ denotes the unit sphere in $\mathbb R^d$, and $a$ is continuous on $S^{d-1}$. We can choose $a(\omega)$ to be equal to the inverse of half the width of $E$
in the direction $\omega$. If the boundary of $E$ is smooth, then there is no ambiguity in defining $a(\omega)$.
The main result of \cite{Lundin} then states that
$$ g_{\C^d \setminus E}(z) = \max_{\omega \in S^{d-1}} g_{\mathbb C \setminus [-1,1]}(a(\omega) \omega^\ast z).$$

A few particular cases are relevant and we list them under separate subsections.

\subsubsection{Real ball} On the unit ball $B^d \subset \R^d$ we remark $a(\omega) = 1$ for all values of $\omega \in S^{d-1}$
and therefore
\begin{equation}\label{real-ball}
g_{\C^d \setminus B^d}(z) = \frac{1}{2} g_{\C \setminus [-1,1]}(\| z \|^2 + | z \cdot z -1 |), \ \ z \notin B^d.
\end{equation}
This result is due to Siciak, see Theorem 5.4.6 in \cite{Klimek}. In particular, for  a {\it real} vector $x \in \R^d$ we infer:
 $$g_{\mathbb C^d\setminus B}(x) = g_{\mathbb C \setminus [-1,1]}(\| x \|)
     = \log(\| x \| + \sqrt{\| x \|^2-1}) = \frac{1}{2} g_{\mathbb C \setminus [-1,1]}(2\| x \|^2-1).
$$

\subsubsection{Real cube} According to the product formula (\ref{product}) we find
\begin{equation}
g_{\C^d \setminus [-1,1]^d}(z) = \max_{1 \leq j \leq d} g_{\mathbb C \setminus [-1,1]}(z_j)|.
\end{equation}

\subsubsection{Simplex} Denote by $\Delta^d$ the standard simplex in $\R^d$; that is, the convex hull of the
vectors $(0,e_1,\cdots,e_d)$, where $(e_j)$ is the standard orthonormal basis. A base change of Lundin's formula
via the map $(z_1,z_2,\cdots,z_d) \mapsto (z_1^2, z_2^2, \cdots, z_d^2)$ leads to the following closed form expression, discovered by Baran:
\begin{equation}
g_{\C^d \setminus \Delta^d}(z) = g_{\mathbb C \setminus [-1,1]}( |z_1| + |z_2| + \cdots + |z_d| + |z_1 + \cdots + z_d -1|).
\end{equation}
For details, see Example 5.4.7 in \cite{Klimek}.

\subsection{The real square with tensor product of Chebyshev weights}\label{sec_tensor}

Let us consider for $x = (x_1,x_2)^T \in [-1,1]^2$ the tensor measure
$$
     d\mu(x) = d\omega(x_1) d\omega(x_2), \quad d\omega(x_1)=\frac{dx}{\pi \sqrt{1-x_1^2}}.
$$
The orthogonal polynomials for the equilibrium measure $\omega$ on $[-1,1]$ are explicitly known; namely, $p_0^\omega(z)=1$ and $p_n^\omega(z)=\sqrt{2} T_n(z)$ for $n \geq 1$,
where $T_n$ are Chebyshev polynomials of the first kind.
In particular, if $\alpha(n)=(j,k)$ then $p_n^\mu(x)=p_j^\omega(x_1)p_k^\omega(x_2)$, making it possible to get more explicit information about the underlying Christoffel-Darboux kernel.

\begin{thm}\label{thm_tensor}
  Consider the enumeration of the multi-indices consistent with the tensor structure; that is, for all integers $n\geq 0$ with $N=n_{max}(n) =(n+1)^2-1$
  $$
       \{\alpha(0),...,\alpha(n_{max}(N)) \} =
       \left\{
       \left[\begin{array}{cc} j \\ \ell
        \end{array}\right], 0 \leq j,\ell\leq n \right\},
  $$
  and set
  \begin{equation} \label{plot_green2}
     g(z)=g_{\mathbb C\setminus[-1,1]}(z_1)g_{\mathbb C\setminus[-1,1]}(z_2).
  \end{equation}
  {\bf (a)} For $z'=(1,1)$ and  $N=(n+1)^2-1,$
  $$
       \max_{z\in \supp(\mu)} K_N^\mu(z,z) = K_N(z',z')=(2n+1)^2 \leq 4 N+1.
  $$
  {\bf (b)} For all $z\in \mathbb R^2 \setminus \supp(\mu)$ and  $N=(n+1)^2-1\to \infty$,
  $$
              K_N^\mu(z,z)\sim
              \left\{\begin{array}{ll}
                 e^{2ng(z)} & \mbox{in the ``corner case" $|z_1|> 1$ and $|z_2|> 1$,}
                 \\
                 (n+1) e^{2ng(z)} & \mbox{else},
              \end{array}\right.
              ,
  $$
  where $\sim$ means that the quotient has a finite and non-zero limit for $n\to \infty$.
  \\
  {\bf (c)} For all (generic) $z,w\in \mathbb R^2 \setminus \supp(\mu)$ with $z_1\neq w_1$ and $z_2 \neq w_2$, and  $N=(n+1)^2-1\to \infty$, $n$ even, we have $C_N^\mu(z,w)$ tending
  to some finite non-zero limit in the corner case for both $z,w$, and else tending to $0$.
\end{thm}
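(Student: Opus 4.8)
The plan is to reduce all three assertions to the one-dimensional Christoffel--Darboux kernel $K_n^\omega$ of the arcsine (equilibrium) measure on $[-1,1]$, for which $p_0^\omega=1$ and $p_j^\omega=\sqrt2\,T_j$ for $j\ge1$ are explicit. Since $d\mu=d\omega(x_1)\,d\omega(x_2)$ and the chosen enumeration fills the square block, the first $N+1=(n+1)^2$ ordered monomials are exactly $\{x_1^jx_2^\ell:0\le j,\ell\le n\}$, the orthonormal polynomial attached to $\alpha=(j,\ell)$ is $p_j^\omega(x_1)\,p_\ell^\omega(x_2)$, and summing the rank-one terms over the block factorises the kernel:
$$
 K_N^\mu(z,w)=K_n^\omega(z_1,w_1)\,K_n^\omega(z_2,w_2),
 \qquad
 C_N^\mu(z,w)=C_n^\omega(z_1,w_1)\,C_n^\omega(z_2,w_2),\qquad N=(n+1)^2-1 .
$$
So it is enough to understand $K_n^\omega$, which is completely explicit.

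First I would record two one-dimensional facts. On $[-1,1]$, writing $x=\cos\theta$ and using $2\cos^2(j\theta)=1+\cos(2j\theta)$ together with the Dirichlet-kernel identity yields the exact formula $K_n^\omega(\cos\theta,\cos\theta)=n+\tfrac12+\tfrac12\,\sin((2n+1)\theta)/\sin\theta$; hence $\max_{x\in[-1,1]}K_n^\omega(x,x)=2n+1$, attained precisely at $x=\pm1$, while at a fixed $x\in(-1,1)$ one has $K_n^\omega(x,x)=n+O(1)$. Off $[-1,1]$ I would write $T_j(z)=\tfrac12(u_z^{\,j}+u_z^{-j})$, with $u_z$ the branch of $z\pm\sqrt{z^2-1}$ of modulus $>1$, so that $\log|u_z|=g_{\mathbb C\setminus[-1,1]}(z)$; summing geometric series then gives, for real $z,w\notin[-1,1]$,
$$
 K_n^\omega(z,z)=n+1+\tfrac12\sum_{j=1}^n\bigl(u_z^{2j}+u_z^{-2j}\bigr)\sim\frac{u_z^{2}}{2(u_z^{2}-1)}\,e^{2ng_{\mathbb C\setminus[-1,1]}(z)},
 \qquad
 K_n^\omega(z,w)\sim\frac{u_zu_w}{2(u_zu_w-1)}\,(u_zu_w)^n ,
$$
whereas if $z\notin[-1,1]$ but $w\in[-1,1]$ then $K_n^\omega(z,w)=O\bigl(e^{ng_{\mathbb C\setminus[-1,1]}(z)}\bigr)$ and $K_n^\omega(w,w)=n+O(1)$. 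No uniformity is needed, as the theorem makes only pointwise claims.

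Given these, the three parts are assembled by bookkeeping. For (a): the tensor identity and the one-dimensional maximum give $\max_{z\in\supp(\mu)}K_N^\mu(z,z)=(2n+1)^2$, attained at the four corners of the square, in particular at $(1,1)$; and $(2n+1)^2=4n^2+4n+1\le4n^2+8n+1=4N+1$. For (b): a point $z\in\mathbb R^2\setminus\supp(\mu)$ has either both coordinates off $[-1,1]$ (corner case) or exactly one. In the corner case the product of the two diagonal asymptotics gives $K_N^\mu(z,z)\sim c(z)\,e^{2ng(z)}$ with $g$ as in \eqref{plot_green2}; when, say, $z_1\notin[-1,1]$ and $z_2\in[-1,1]$, one factor is $\sim c(z_1)\,e^{2ng_{\mathbb C\setminus[-1,1]}(z_1)}$ and the other is $\sim c(z_2)\,(n+1)$ with $c(z_2)>0$, so $K_N^\mu(z,z)\sim c(z)\,(n+1)\,e^{2ng(z)}$ because $g_{\mathbb C\setminus[-1,1]}(z_2)=0$. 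For (c): $C_N^\mu(z,w)=C_n^\omega(z_1,w_1)\,C_n^\omega(z_2,w_2)$ and $|C_n^\omega|\le1$ by Cauchy--Schwarz. If $z$ and $w$ both lie in the corner case, dividing the asymptotics above shows each factor $C_n^\omega(z_i,w_i)$ tends to $\sqrt{(u_{z_i}^2-1)(u_{w_i}^2-1)}\big/\,\bigl|u_{z_i}u_{w_i}-1\bigr|\neq0$ --- here the hypothesis that $n$ is even fixes the sign of $(u_{z_i}u_{w_i})^n$, which otherwise oscillates when a coordinate lies to the left of $[-1,1]$ (where $u<0$) --- so the product has a finite non-zero limit. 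If instead $z$ or $w$ fails the corner condition, then for some coordinate $i$ one of the two distinct numbers $z_i\neq w_i$ lies in $[-1,1]$, and the estimates above give $K_n^\omega(z_i,w_i)=o\bigl(\sqrt{K_n^\omega(z_i,z_i)\,K_n^\omega(w_i,w_i)}\bigr)$ --- the gap being a factor of order $\sqrt n$ contributed by the argument in $[-1,1]$ --- so $C_n^\omega(z_i,w_i)\to0$ and hence $C_N^\mu(z,w)\to0$.

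The hard part will be the off-support one-dimensional analysis: extracting from $T_j(z)=\tfrac12(u_z^{\,j}+u_z^{-j})$ the correct leading constants and remainders for the partial sums $K_n^\omega(z,z)$ and $K_n^\omega(z,w)$, and --- above all --- organising the case distinctions in (c) together with the sign/parity bookkeeping that makes the assumption ``$n$ even'' necessary. Everything else reduces to elementary identities for Chebyshev polynomials and Dirichlet kernels.
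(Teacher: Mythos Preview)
Your proof is correct and follows essentially the same route as the paper: factor $K_N^\mu$ and $C_N^\mu$ through the tensor structure and then analyse the one-dimensional Chebyshev kernel $K_n^\omega$ via explicit geometric sums. Two minor stylistic differences: for (a) the paper simply uses $T_k(x)^2\le 1=T_k(1)^2$ rather than your Dirichlet-kernel identity, and for the corner case in (c) the paper invokes its earlier ratio-asymptotics theorem (Theorem~\ref{thm_ratio}) instead of computing the limit of $C_n^\omega(z_i,w_i)$ directly from the geometric sums as you do. Your direct computation is arguably more self-contained, and your explanation of why the parity restriction ``$n$ even'' is needed (sign of $(u_{z_i}u_{w_i})^n$ when a coordinate lies to the left of $[-1,1]$) is more explicit than the paper's.

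One small omission in your case analysis for (c): when $z$ (say) fails the corner condition with $z_i\in[-1,1]$, it can happen that $w_i\in[-1,1]$ as well (e.g.\ $z=(0,2)$, $w=(\tfrac12,3)$). Your recorded one-dimensional estimates only cover the mixed case $z_i\in[-1,1]$, $w_i\notin[-1,1]$. For the remaining sub-case $z_i,w_i\in[-1,1]$, $z_i\neq w_i$, the product-to-sum formula $2\cos(j\theta_z)\cos(j\theta_w)=\cos(j(\theta_z-\theta_w))+\cos(j(\theta_z+\theta_w))$ reduces $K_n^\omega(z_i,w_i)$ to two bounded Dirichlet sums, so $K_n^\omega(z_i,w_i)=O(1)$ while both diagonals grow like $n$, and $C_n^\omega(z_i,w_i)\to 0$. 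The paper names both sub-cases but gives no more detail than this either.
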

\begin{proof}
  We first mention that, for $N=(n+1)^2-1$,
  $$
      K_N^\mu(z,w)=K_n^\omega(z_1,w_1)K_n^\omega(z_2,w_2), \, \, \,
      C_N^\mu(z,w)=C_n^\omega(z_1,w_1)C_n^\omega(z_2,w_2),
  $$
  and hence for the assertion of the Theorem it is sufficient to consider the univariate case. For a proof of part (a), it is sufficient to recall that, for $z_1\in [-1,1]$, $T_k(z_1)^2 \leq 1=T_k(1)^2$, and hence $K_n^\omega(z_1,z_1)\leq K_n^\omega(1,1)=2n+1$.

  For a proof of part (b) and (c), we write
  $$
      z_j = \frac{1}{2}( u_j + \frac{1}{u_j}) , \quad
      w_j = \frac{1}{2}( v_j + \frac{1}{v_j}) , \quad
      |u_j| \geq 1, \quad |v_j|\geq 1,
  $$
  and $\log|u_1|=g_{\mathbb C \setminus [-1,1]}(z_1)$.
  Then by using geometric sums it is quite easy to check that
\begin{equation*} \label{kernel_cheby}
   K_n^\omega(z_1,z_1) \sim \left\{
   \begin{array}{ll}
      \frac{|u_1|^{2n}}{2(1-1/|u_1|^2)}
      & \mbox{if $z_1\not\in [-1,1]$, that is, $|u_1|>1$,}
      \\
      (n+1)
      & \mbox{if $z_1=\cos(t)\in [-1,1]$, that is, $u_1=e^{it}$,}
   \end{array}\right.
\end{equation*}
   implying part (b).

   For a proof of part (c), we recall that, for distinct real $w_1,z_1\not\in[-1,1]$ with $|z_1|>1$, $|w_1|> 1$ we obviously have ratio asymptotics $p_n^\omega(z_1)/p_{n+1}^\omega(z_1) \to 1/u_1$ for $n \to \infty$, and hence by Theorem~\ref{thm_ratio}
   $$
        \lim_{n \to \infty} C_n^\omega(z_1,w_1) (\frac{z_1w_1}{|z_1w_1|})^n =
        \frac{\sqrt{(u_1^2-1)(v_1^2-1)}}{u_1v_1 - 1},
   $$
   which simplifies for even $n$.
   If however $z_1\in [-1,1]$, then the quotient $|K_n^\omega(z_1,w_1)|/\sqrt{K_n^\omega(w_1,w_1)}$ is shown to be bounded in both cases $w_1 \not\in [-1,1]$, and $w_1\in [-1,1]\setminus\{ z_1 \}$, and hence $C_n^\omega(z_1,w_1)$ tends to $0$ by part (b).
\end{proof}

If $z\in \mathbb R^2\setminus \supp(\mu)$ and the masspoints of $\sigma$ have distinct coordinates,
we conclude as in the proof of Corollary~\ref{cor_ratio} that $K_N^{\mu+\sigma}(z,z)/K_{N}^{\mu}(z,z)$ has a limit for $N=(n+1)^2-1 \to \infty$, which is different from one only if the corner case holds for $z$ and at least one mass point. Hence, as in Corollaries \ref{cor1_Bergman} and \ref{cor2_Bergman}, we verify that our bivariate leverage score works in this setting.

In order to compare the preceding theorem with Lemma \ref{lem_growth}, we notice that, in case of graded lexicographical ordering and $N=n_{tot}(n)=(n+1)(n+2)/2-1$, we still have that $\max_{z\in \supp(\mu)} K_N^\mu(z,z)\leq 4N+1$. Hence, with the plurisubharmonic Green function of $[-1,1]^2$
\begin{equation} \label{plot_green1}
     \widetilde g(z)=\max\{g_{\mathbb C \setminus [-1,1]}(z_1),g_{\mathbb C \setminus [-1,1]}(z_2) \},
\end{equation}
one may show that
$$
       \frac{1}{2} e^{2n\widetilde g(z)} \leq K_N^\mu(z,z) \leq
       e^{2n\widetilde g(z)} \max_{\deg P \leq N} \max_{x\in \supp(\mu)} \frac{|p(x)|^2}{\| P \|_{2,\mu}^2}
       \leq
       (4N+1)e^{2n\widetilde g(z)} ,
$$
the left-hand side being obtained by \eqref{def_kernel3_ter} for $p(z_1,z_2)\in \{p_n^\omega(z_1),p_n^\omega(z_2) \}$. All these findings are less precise than for the case of tensor ordering, and it seems to be non-trivial to get cosine asymptotics.

\begin{figure}
   \centerline{\includegraphics[scale=0.6]{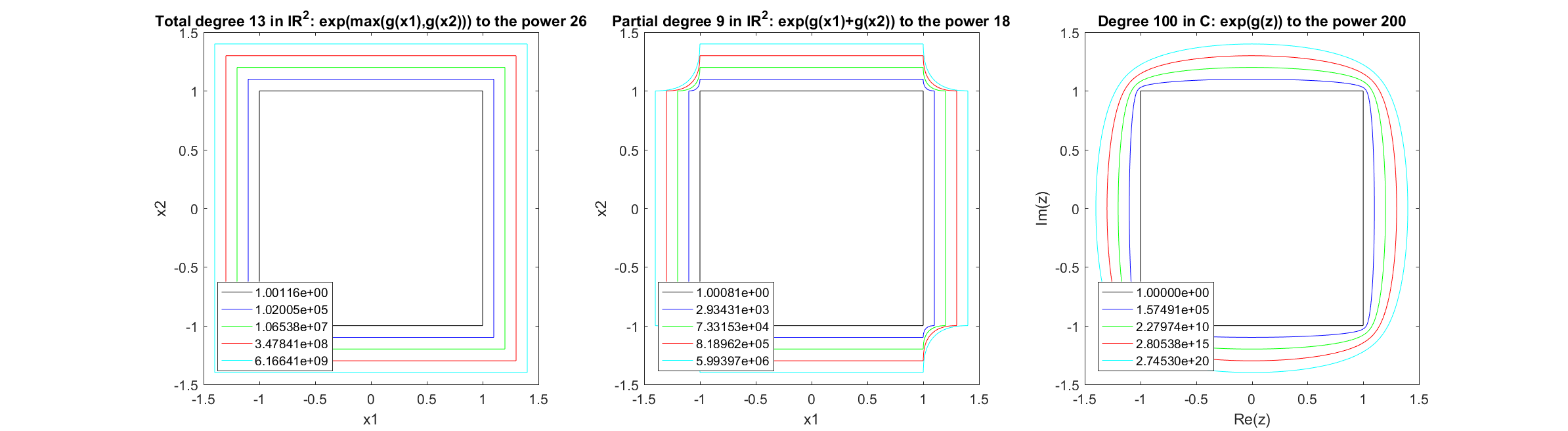}}
   \caption{Comparison of level lines of Green functions for the unit square and three different families of orthogonal polynomials: for the left-hand image we utilized the plurisubharmonic Green function~\eqref{plot_green1} of $[-1,1]^2 \subset \mathbb R^2,$ which corresponds to bivariate OP with graded lexicograpghical ordering (i.e., total degree); the middle image is generated by using the tensor Green function~\eqref{plot_green2} corresponding to bivariate OP and partial degree is used; and for the right-most image, the complex Green function corresponding to Bergman OP of one complex variable is used.}\label{fig7.2}
\end{figure}

   In Figure~\ref{fig7.2} we compare three approaches to detect outliers outside the unit square $[-1,1]^2$, and in particular address the question whether one should prefer an analysis in $\mathbb R^2$ or the complex plane $\mathbb C$. Though not fully justified for the case of two real variables, we expect to be able to detect successfully outliers at $z\not\in [-1,1]^2$ for a parameter $N$ provided that $K_N^\mu(z,z)$ is large. The left-hand plot corresponds to graded lexicographical ordering (total degree) discussed in the previous paragraph, where we draw some level lines of $\exp(2n\widetilde g(z))$ with the plurisubharmonic Green function \eqref{plot_green1} for $z\in \mathbb R^2\setminus [-1,1]^2$ instead of those of $K_N^\mu(z,z)$, $N+1=(n+1)(n+2)/2$. In the middle plot corresponding to the tensor case (partial degree) discussed in Theorem~\ref{thm_tensor} we draw some level lines of $\exp(2ng(z))$ with the tensor Green function \eqref{plot_green2} for $z\in \mathbb R^2\setminus [-1,1]^2$ instead of those of $K_N^\mu(z,z)$, $N+1=(n+1)^2$. Finally, on the right corresponding to the case of one complex variable $z$ discussed in \S\ref{sec_Bergman} one finds some level lines of $\exp(2ng_{\mathbb C\setminus[-1,1]^2}(z))$ for $z\in \mathbb C\setminus [-1,1]^2$ instead of those of $K_N^\mu(z,z)$, $N+1=n+1$, compare with Theorem~\ref{thm_Bergman}(c). Notice that there is no easy closed form expression for this complex Green function $g_{\mathbb C\setminus[-1,1]^2}$, we have used the Schwarz-Christoffel toolbox for Matlab of Toby Driscoll. In order to make comparison fair, we should use about the same number $N+1$ of orthogonal polynomials: we have chosen from the left to the right $n=13$, $n=9$ and $n=100$, corresponding to $N+1=105$, $N+1=100$ and $N+1=101$, respectively.

   Some observations are are especially noteworthy. We start by comparing the two bivariate kernels on the left-hand side and in the middle: from the behavior of the level lines at the corners of $[-1,1]^2$ it is apparent that one should prefer partial degree to total degree for detecting outliers with all components outside $[-1,1]$ (called the corner case in Theorem~\ref{thm_tensor}). Comparing the values of the corresponding level lines, the opposite conclusion seems to be true for other outliers. However, much more striking, the parameters of the level lines on the right-hand side are increasing much faster, and also here the level curves do fine around the corners.
   This confirms our claim that outlier detection of $(2d)$-dimensional data should be done in $\mathbb C^d$ and not in $\mathbb R^{2d}$, at least for $d=1$.

\subsection{Christoffel-Darboux kernel associated to Reinhardt domains in $\C^d$}\label{sec_Reinhardt} Natural domains of convergence for power series
in several complex variables are quite diverse, and in general they are not bi-holomorphically equivalent. These are known as
(logarithmically convex) Reinhardt domains and offer already a vast area of research for function theory and complex geometry.
The recent monograph \cite{JP} gives a comprehensive account of the theory of Reinhardt domains. For our immediate
purpose, this class of domains $D$ is important because the monomials are orthogonal (but not orthonormal) with respect to Lebesgue measure $\mu^D$ (and thus $k_n^{\mu^D}=\deg p_n^{\mu^D}=n$ in Lemma~\ref{lem_degree_OP}); moreover in
most cases the monomials are complete in the associated Bergman space.

By way of illustration we consider the two most important, and nonequivalent, Reinhardt domains: the ball and the polydisk. In the sequel of this subsection we will work in $\mathbb C^d$ for some fixed $d\geq 1$, and enumerate the multivariate monomials $z^\alpha$ in graded lexicographical ordering. Let $N=N(n)$ be the number of monomials of total degree $\leq n$.

\subsubsection{The complex ball} We first consider the Lebesgue (or volume) measure $\mu^{\B}$ on the unit ball
$$
    \B = \B^d = \{ z \in \C^d; \ \| z \| <1\}, $$
having as boundary the odd dimensional sphere $S^{2d-1}$. Here with help of polar coordinates one gets
$$
     \int z^\alpha \overline{z^\beta} d\mu^\B(z)  = \frac{\pi^d \alpha!}{(|\alpha|+d)!} \quad \mbox{for $\alpha=\beta$, and zero else.}
$$
Thus, up to normalization, the orthonormal polynomials are monomials, and we get for the Christoffel-Darboux kernel using the multinomial formula
$$
       K_N^{\mu^\B}(z,w) =
       \frac{1}{\pi^d} \sum_{k=0}^n \sum_{|\alpha|=k} \frac{(|\alpha|+d)!}{\alpha!} z^\alpha \overline{w}^\alpha
       =
       \frac{1}{\pi^d} \sum_{k=0}^n (k+1)_d (w^*z)^k,
$$
where we use the abbreviation $w^*z=\overline w_1 z_1+...+\overline w_d z_d$, together with the Pochhammer symbol $(a)_k=a(a+1)...(a+k-1)$.
Again, to simplify notation, above and henceforth in this section $N = n_{tot}(n).$

We already know from Siciak, see formula (\ref{complex-ball}), that
$$ \lim_{n \rightarrow \infty} K_N^{\mu^\B}(z,z)^{1/n} = \| z \|^2, \ \ \| z \| >1.$$
The explicit form of the polarized kernel allows a closer look at the asymptotic behavior for large degree.

Provided that $|w^*z|<1$ (and thus in particular for $w,z\in \B$), the limit for $N\to \infty$ and thus for $n\to \infty$ exists, and is given by the Bergman reproducing kernel
$$
        K^{\mu^\B}(z,w) =
               \frac{1}{\pi^d} \sum_{k=0}^\infty (k+1)_d (w^*z)^k
               = \frac{d!}{\pi^d} (1-w^*z)^{-d-1} .
$$
Note that the unbounded region defined in $\C^d \times \C^d$ by the inequality $|w^*z|<1$ may well project on one or the other coordinate in the exterior of the ball.

In the case $w^*z=1$ we obtain
$$
       K_N^{\mu^\B}(z,w) = \frac{d!}{\pi^d} \sum_{k=0}^n \Bigl( { {k+d } \atop d } \Bigr) = \frac{d!}{\pi^d} \Bigl( { {n+1+d } \atop {d+1} } \Bigr)
       = \frac{(n+1)_{d+1}}{\pi^d (d+1)} ,
$$
which is also an upper bound for $K_N^{\mu^\B}(z,w)$ in the case $|w^*z|=1$.
In particular we learn that $K_N^{\mu^\B}(z,z)$ grows at most like $n^{d+1}=\mathcal O(N^2)$ for $z$ in the support of $\mu^{\B}$. Finally, in the case $|w^*z|>1$,
$$
        K^{\mu^\B}_N(z,w) =
               \frac{(n+1)_d}{\pi^d} (w^*z)^n \sum_{j=0}^n \frac{(n-j+1)_d}{(n+1)_d} (w^*z)^{-j}
        \sim \frac{(n+1)_d}{\pi^d} \frac{(w^*z)^{n+1}}{{w^*z}-1}
$$
where in the last step we have used the fact that, for $0\leq j\leq n$,
$$
     0 \leq 1 - \frac{(n-j+1)_d}{(n+1)_d}
     = \sum_{\ell=0}^{j-1} \frac{(n-\ell+1)_d-(n-\ell)_d}{(n+1)_d}
     \leq \frac{jd}{n+d}.
$$
If we exclude the (unlikely for outlier analysis) case that $z$ is a complex multiple of $w$, we conclude that, for $z,w\not\in \supp(\mu^\B)$,
\begin{equation} \label{eq.Ball}
      K^{\mu^\B}_N(z,z)
              \sim \frac{(n+1)_d}{\pi^d} \frac{\| z\|^{2n+2}}{\| z \|^2-1},
              \quad \lim_{N\to \infty} C_N(z,w) = 0 .
\end{equation}

\subsubsection{The polydisk} In complete analogy, consider the Lebesgue measure $\mu^{\PP}$ on the polydisk $\PP = \D^d$. Here again multivariate monomials are orthogonal, with the normalization constant
$$
      \| z^\alpha \|^2_{2,\mu^{\PP}} = \frac{\pi^d}{\prod_{j=1}^d (\alpha_j+1)},
$$
and hence
$$
      K_N^{\mu^{\PP}}(z,w) =
      \frac{1}{\pi^d} \sum_{k=0}^n \sum_{|\alpha|=k} z^\alpha \overline w^\alpha \prod_{j=1}^d (\alpha_j +1 ).
$$
It is not difficult to check that, provided that $|w_jz_j|<1$ for all $j$,
$$
          \lim_{N\to \infty}
          K_N^{\mu^{\PP}}(z,w) =
          \frac{1}{\pi^d} \prod_{j=1}^d \frac{1}{(1-z_j \overline w_j)^2},
$$
which is the well known Bergman space reproducing kernel for $\PP = \mathbb D^d$. Also, the reader may check that, again, $K_N^{\mu^{\PP}}(z,z)$ grows at most polynomially in $N$ for $z\in \supp(\mu^{\mathbb D^d})$. As for the exterior behavior, again the extremal plurisubharmonic function approach gives
$$
\lim_{n \to \infty} K_N^{\mu^{\PP}}(z,z)^{1/n} = \max_{1 \leq j \leq d} |z_j|^2, \ \ z \notin \PP,
$$
cf. formula (\ref{product}).

Since the further analysis is a bit involved, we will restrict ourselves to the special case $d=2$ and $z,w\not\in \PP$; that is, $\max (|z_1|,|z_2|) >1$ and
$\max (|w_1|,|w_2|) >1$. If $|\overline w_1z_1|<|\overline w_2 z_2|$ then
\begin{eqnarray*}
      \pi^2 K_N^{\mu^{\PP}}(z,w) &=&
      \sum_{k=0}^n (\overline w_2z_2)^k \sum_{j=0}^k (j+1)(k+1-j) \bigl( \frac{\overline w_1z_1}{\overline w_2z_2}\Bigr)^j
      \\&\sim&
      \sum_{k=0}^n (k+1) (\overline w_2z_2)^k
      \frac{(\overline w_2z_2)^2}{(\overline w_2z_2-\overline w_1z_1)^2}
      \\&\sim&
      \frac{n+1}{\overline w_2z_2-1}
      \frac{(\overline w_2z_2)^{n+3}}{(\overline w_2z_2-\overline w_1z_1)^2}.
\end{eqnarray*}
The case $\overline w_1z_1=\overline w_2z_2$ can be similarly treated. We conclude that $K_N^{\mu^{\PP}}(z,z)$ grows outside the support at least like
$\max \{ |z_1|^{2n},|z_2|^{2n}\}$ times some polynomial in $n$. An asymptotic analysis for $C_N^{\mu^{\PP}}(z,w)$ is possible but quite involved and we omit the details.

\section{Index of notation}\label{sec_index} {\small
We list here the meanings of many of the symbols that are
 frequently used throughout the paper.

\begin{itemize}

\item $K^\mu_n(z,w)$ the Christoffel-Darboux kernel consisting of $(n+1)$ summands;

\item ${\rm tdeg} p$ the total degree of a multivariate polynomial $p$;

\item $\deg p$ the degree of a multivariate orthogonal polynomial, indicating the position $n = \deg p$ of $p$ in a prescribed linear ordering of independent monomials;

\item $S = \supp(\mu)$  the support of the measure $\mu$;

\item $\mathcal S (\mu)$ the Zariski closure of the support of a positive measure $\mu$;

\item $\mathcal N (\mu)$ the ideal of polynomials vanishing on $\mathcal S (\mu)$;

\item $v_n(z)$ the tautologic vector of monomials of degree less than or equal to $n$;

\item $v^\mu_n(z)$ the tautologic vector of $\mu$-orthogonal polynomials;

\item $\widetilde{M}_n(\mu)$ the matrix of moments of degree less than or equal to $n$, associated to a positive measure $\mu$;

\item $M_n(\mu)$ the reduced moment matrix, a maximal invertible submatrix of $\widetilde{M}_n(\mu)$;

\item $\Delta(z,\mu)$ the Mahalanobis distance between a point $z$ and a measure $\mu$;

\item $t_j K_n^{\widetilde \nu}(z^{(j)},z^{(j)})$ the leverage score of a masspoint $z^{(j)}$ with mass $t_j$ of a discrete measure $\widetilde \nu$;

\item $\mathcal L_n (\mu)$ the linear span of orthogonal polynomials of degree less than or equal to $n$;

\item $C^\mu_n(z,w)$ the cosine function associated to the Christoffel-Darboux kernel;

\item $\mathcal{C}(\mu)$ the covariance matrix of a random variable with law $\mu$;

\item $g_\Omega$ the plurisubharmonic Green function of the unbounded open set $\Omega$, with pole at infinity;

\item $\Phi$ the normalized conformal mapping of the unbounded component of the complement of a Jordan curve in $\C$ onto the exterior of the unit disk;

\item $T_n$ Chebyshev polynomial of the first kind.

\end{itemize}
}


\end{document}